\theoremstyle{definition}
\newtheorem{dfn}{Definition}
\newtheorem{thm}[dfn]{Theorem}
\newtheorem{lem}[dfn]{Lemma}
\newtheorem{rem}[dfn]{Remark}
\newtheorem{cor}[dfn]{Corollary}
\newtheorem{condition}[dfn]{Condition}
\def\MR#1{\quad \href{http://www.ams.org/mathscinet-getitem?mr=#1}{MR#1}}
\def\N{\mathbb{N}}
\def\C{\mathbb{C}}
\def\R{\mathbb{R}}
\def\Q{\mathbb{Q}}
\def\Z{\mathbb{Z}}
\def\A{\mathbb{A}}
\def\GL{{\mathop{\mathrm{GL}}}}
\def\PGL{{\mathop{\mathrm{PGL}}}}
\def\SL{{\mathop{\mathrm{SL}}}}
\def\O{{\mathop{\textnormal{O}}}}
\def\SO{{\mathop{\mathrm{SO}}}}
\def\U{{\mathop{\textnormal{U}}}}
\def\d{\,\mathrm{d}}
\def\diag{\mathrm{diag}}
\def\M{\mathrm{M}}
\def\Cc{C_c^{\infty}}
\def\ds{\displaystyle}
\def\bs{\backslash}
\def\t{{}^t\!}
\def\inf{\infty}
\def\fin{\mathrm{fin}}
\def\fc{{\mathfrak{c}}}
\def\fd{{\mathfrak{d}}}
\def\ff{{\mathfrak{f}}}
\def\fo{\mathfrak{o}}
\def\fO{\mathfrak{O}}
\def\fP{{\mathfrak{P}}}
\def\fC{\mathfrak{C}}
\def\cD{{\mathcal{D}}}
\def\cL{{\mathcal{L}}}
\def\cO{\mathcal{O}}
\def\cS{\mathcal S}
\def\cI{\mathcal I}
\def\cK{{\mathcal{K}}}
\def\cP{\mathcal{P}}
\def\cE{\mathcal{E}}
\def\GJ{{\mathrm{GJ}}}
\def\sD{{\mathscr{D}}}
\def\sF{\mathscr{F}}
\def\Re{{\mathop{\mathrm{Re}}}}
\def\Tr{{\mathop{\mathrm{tr}}}}
\def\Hom{{\mathop{\mathrm{Hom}}}}
\def\diag{{\mathop{\mathrm{diag}}}}
\def\Ad{{\mathop{\mathrm{Ad}}}} 
\def\vol{{\mathop{\mathrm{vol}}}}
\def\Sym{{\mathop{\mathrm{Sym}}}}
\DeclareMathOperator{\Res}{Res}
\DeclareMathOperator{\pr}{pr}
\title[Explicit mean value theorems for toric periods]{Explicit mean value theorems for toric periods and automorphic $L$-functions}
\let\@wraptoccontribs\wraptoccontribs
\author[Miyu Suzuki]{Miyu Suzuki} 
\address{
Depertment of Mathematics,  Faculty of Science\\
Kyoto University \\
Kitashirakawa Oiwake-cho, Sakyo-ku,  Kyoto 606-8502,   JAPAN}
\email{suzuki.miyu.4c@kyoto-u.ac.jp}
\author[Satoshi Wakatsuki]{Satoshi Wakatsuki}
\email{wakatsuk@staff.kanazawa-u.ac.jp}
\address{
Faculty of Mathematics and Physics, Institute of Science and Engineering\\
Kanazawa University \\
Kakumamachi, Kanazawa, Ishikawa, 920-1192, JAPAN}
\begin{document}


\setcounter{tocdepth}{1}

\begin{abstract}
Let $F$ be a number field and $D$ a quaternion algebra over $F$.
Take a cuspidal automorphic representation $\pi$ of $D_\A^\times$ with trivial central character and a cusp form $\phi$ in $\pi$.
Using the prehomogeneous zeta function, we find an explicit mean value of the  toric periods of $\phi$ with respect to quadratic algebras over $F$.  
The result can also be written as a mean value formula for the central values of automorphic $L$-functions twisted by quadratic characters.
\end{abstract}

\maketitle 

\tableofcontents

\section{Introduction}

In this paper, as a sequel of \cite{SW}, we prove a mean value theorem for toric periods.
Our result is based on the celebrated work of Waldspurger on toric periods and the central $L$-values.
Before stating our main theorem, we recall their result.

We fix a number field $F$ and a quaternion algebra $D$ over $F$.
Let $\A=\A_F$ be the adele ring of $F$ and $\pi=\otimes_v\pi_v$ an irreducible cuspidal automorphic representation of $D^\times_\A$ with trivial central character.
Here, $ D_\A=D\otimes_F\A$. 
Let $E$ be a quadratic \'etale algebra over $F$ embedded in $D$ and we regard $E^\times$ as a subtorus of $D^\times$.
In \cite{Wal2}, Waldspurger proved a relation between an automorphic period integral
    \[
    \cP_E(\phi)=\int_{\A_F^\times E^\times\bs\A_E^\times}\phi(h)\d^\times h,
    \hspace{25pt} \phi\in\pi
    \]
which we call a \textit{toric period} and the $L$-value $L(\tfrac12, \pi)L(\tfrac12, \pi\otimes\eta_E)$.
Here, $\eta_E=\otimes_v\eta_{E_v}$ is the quadratic character on $\A_F^\times$ attached to $E$.
To state this result, we need to introduce more notation.
Let $\langle \cdot , \cdot \rangle$ be the Petersson inner product
    \[
    \langle \phi_1, \phi_2 \rangle=\int_{\A_F^\times D^\times\bs  D_\A^\times}
    \phi_1(h)\overline{\phi_2(h)} \d h,
    \hspace{25pt} \phi_1, \phi_2\in\pi
    \]
and fix a decomposition as a product $\langle \cdot , \cdot \rangle=\prod_v\langle \cdot , \cdot \rangle_v$, where $\langle \cdot , \cdot \rangle_v$ is a $D_v^\times$-invariant inner product on $\pi_v$. 
Put 
    \[
    \alpha_{E_v}(\phi_{1, v}, \phi_{2, v})
    =\int_{F_v^\times\bs E_v^\times}\langle \pi_v(h)\phi_{1, v}, \phi_{2, v} \rangle_v
    \d h.
    \]
It converges absolutely.
Let $\zeta_F(s)$ be the Dedekind zeta function of $F$, $L(s, \pi, \Ad)$ the adjoint $L$-function of $\pi$ and $L(s, \eta_E)$ the Hecke $L$-function. 
We denote the partial Euler product of $\zeta_F(s)$, $L(s, \eta_E)$ and $L(s, \pi, \Ad)$ by $\zeta_F^S(s)$, $L^S(s, \eta_E)$ and $L^S(s, \pi, \Ad)$, respectively.

Now we are ready to state the result of Waldspurger.
Let $\phi=\otimes_v\phi_v\in\pi$ be a decomposable element and $S$ a sufficiently large finite set of places of $F$ such that $\pi_v$ is unramified for any $v\notin S$.
Waldspurger proved that there is a constant $C>0$ depending on the choice of Haar measures such that
    \[
    |\cP_E(\phi)|^2
    =C \, \frac{\zeta_F^S(2)L^S(\frac12, \pi)L^S(\frac12, \pi\otimes\eta_E)}
    {L^S(1, \pi, \Ad)L^S(1, \eta_E)}\alpha_{E, S}(\phi),
    \]
where $\alpha_{E, S}(\phi)=\prod_{v\in S}\alpha_{E_v}(\phi_v,\phi_v)$.
In particular, $L(\tfrac12, \pi)=0$ implies $\cP_E(\phi)=0$ for any $E$.
Conversely, in \cite{SW} we showed that if $L(\frac12, \pi)\neq0$ then there are infinitely many isomorphism classes of $E$ such that $\cP_E$ is not identically zero on $\pi$.
The main theorem of this paper is a refinement of this previous result.

\subsection{Main results}\label{sec:1-1}
Suppose that $S$ is sufficiently large.
Assume also that there is an $S$-tuple $\cE_S=(\cE_v)_{v\in S}$ of quadratic \'etale algebras such that $\alpha_{\cE_v}(\phi_v, \phi_v)\neq0$ and fix such $\cE_S$.
By \cite{SW}*{Theorem 1.7},  this is possible if $L(\tfrac12, \pi)\neq0$.
We fix a set of representatives $X(D)$ of isomorphism classes of quadratic \'etale $F$-subalgebras of $D$.
Let $X(D,\cE_S)$ be the set of $E\in X(D)$ such that $E_v\simeq\cE_v$ for any $v\in S$.

Let $\ff_{E_v}$ be the conductor of the quadratic character $\eta_{E_v}$, $N(\ff_{E_v})$ its norm and $N(\ff_E^S)=\prod_{v\notin S}N(\ff_{E_v})$. 
The absolute discriminant of $ F/\Q$ is denoted by $|\Delta_F|$.
If $v$ is a finite place, set $c_v=\zeta_{F_v}(1)$.
If $v$ is an archimedean place, set $c_v=1$.
Let $c_F$ be the residue of the finite part of $\zeta_F(s)$ at $s=1$.
For $v\not\in S$, set $\lambda_v=q_v^{\frac12}(\alpha_v+\alpha_v^{-1})$ where $\alpha_v\in\C^\times$ is the Satake parameter of $\pi_v$ and $q_v$ is the order of the residue field of $F_v$.
The following is a special case (the $t=1$ case) of Theorem \ref{thm:main}.

\begin{thm}[A special case of Theorem \ref{thm:main}]\label{thm:mainintro}
The limit
    \[
    \lim_{x\to\inf} \, x^{-1}                    
    \sum_E
     L(1,\eta_E)^2 \, \alpha_{E, S}(\phi)^{-1}\,  |\cP_E(\phi)|^2
    \] 
exists,  where the sum is over $E\in X(D,  \cE_S)$ such that $N(\ff_E^S)<x$.
The value of the above limit equals
    \[
    L(\tfrac12,\pi) \,
     \frac{2 \, c_F^2}{ |\Delta_F|^{\frac{1}{2}}} \,
     \prod_{v\in S}\frac{L(1, \eta_{\cE_v})}{2c_v\, L(\frac12, \pi_v)}\,
     \prod_{v\notin S}
     \left\{ 1-q_v^{-3}-\frac{q_v-1}{q_v+1}q_v^{-3}\lambda_v^2  \right\}.
    \]
\end{thm}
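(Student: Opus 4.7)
The plan is to substitute Waldspurger's formula so as to isolate the part of the sum that genuinely varies with $E$, recognize the resulting Dirichlet series as produced by a prehomogeneous zeta function, and then extract the asymptotic via a Tauberian theorem combined with an explicit residue computation.

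After substituting Waldspurger's formula and writing $L(1, \eta_E) = L_S(1, \eta_E) L^S(1, \eta_E)$, each summand equals
\[
C \cdot \zeta_F^S(2) \, \frac{L^S(\tfrac12, \pi)}{L^S(1, \pi, \Ad)} \cdot L_S(1, \eta_E)^2 \cdot L^S(1, \eta_E) \, L^S(\tfrac12, \pi \otimes \eta_E).
\]
Since $E_v \simeq \cE_v$ for every $v \in S$, the factor $L_S(1, \eta_E)^2 = \prod_{v \in S} L(1, \eta_{\cE_v})^2$ is independent of $E$ and pulls out of the sum. The remaining task is the asymptotic evaluation of $\sum_{E : N(\ff_\eta^S) < x} L^S(1, \eta_E) L^S(\tfrac12, \pi \otimes \eta_E)$.

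I would attack this through the Dirichlet series
\[
Z(s) = \sum_E L^S(1, \eta_E) \, L^S(\tfrac12, \pi \otimes \eta_E) \, N(\ff_\eta^S)^{-s}
\]
and identify $Z(s)$ with a prehomogeneous zeta function on the space of trace-zero elements of $D$ (essentially a space of binary quadratic forms), built by pairing a spherical Eisenstein-type section with a matrix coefficient of $\pi$ chosen to cut out the $\cE_S$-orbit at the places of $S$. The factor $L(1, \eta_E)$ emerges from orbital integration against regular semisimple elements generating $E$, while $L(\tfrac12, \pi \otimes \eta_E)$ appears from unfolding the matrix coefficient along the torus $E^\times$. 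This extends the framework developed in \cite{SW}.

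The analytic core is to establish meromorphic continuation of $Z(s)$ slightly past $\Re(s) = 1$ with a unique simple pole at $s = 1$, and to compute its residue place by place. An unramified Satake computation at $v \notin S$ should produce the factor $1 - q_v^{-3} - \tfrac{q_v-1}{q_v+1} q_v^{-3} \lambda_v^2$; the orbital integrals at $v \in S$ should yield $L(1, \eta_{\cE_v})/(2 c_v L(\tfrac12, \pi_v))$, and once combined with $L^S(\tfrac12, \pi)$ the product reconstitutes $L(\tfrac12, \pi)$; the global prefactor $2 c_F^2 / |\Delta_F|^{1/2}$ should be assembled from $\Res_{s=1} \zeta_F(s) = c_F$ and Tamagawa measure normalizations. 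A Wiener--Ikehara Tauberian theorem then converts $\Res_{s=1} Z(s)$ into the asymptotic claimed in the theorem. I expect the main obstacles to be (i) correctly identifying $Z(s)$ as the prehomogeneous zeta function so that the hypothesis $\alpha_{\cE_v}(\phi_v, \phi_v) \neq 0$ guarantees nonvanishing of the $S$-local orbital integrals, and (ii) ruling out spurious poles of $Z(s)$ on the line $\Re(s) = 1$ that would obstruct the Tauberian step, together with the careful local bookkeeping to pin down every constant.
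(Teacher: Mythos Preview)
Your broad strategy --- encode the sum as a prehomogeneous zeta function, locate the pole, and apply a Tauberian theorem --- matches the paper, but the execution diverges from what the paper actually does in three substantive ways, the second of which is a genuine gap.

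\textbf{(1) Different prehomogeneous space.} The paper does \emph{not} use the trace-zero elements of $D$. It uses the 8-dimensional space $V=D\oplus D$ under $G=D^\times\times D^\times\times \GL_2$ (the zeta function of \cite{SW}). With this choice the orbital sum produces toric periods $|\cP_E(\phi)|^2$ directly; Waldspurger is never invoked in the proof of the theorem itself, only afterward to pass to Corollary~\ref{cor:mvfL}. Your plan of substituting Waldspurger first and then hunting for a zeta function whose Dirichlet coefficients are $L^S(1,\eta_E)L^S(\tfrac12,\pi\otimes\eta_E)$ is backwards relative to the paper, and it is not clear that the 3-dimensional space you propose supports such an integral with the required analytic properties.

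\textbf{(2) Missing filtering step.} This is the real gap. The prehomogeneous zeta function does \emph{not} yield the clean series $\sum_E (\cdots)\,N(\ff_\eta^S)^{-s}$. What one actually gets (Theorem~\ref{thm:Dirichlet}) is
\[
\sD(D,\cE_S,\phi,s)=\sum_{E\in X(D,\cE_S)}\frac{L(1,\eta)^2\,\alpha_E^{\cE_S}(\phi)\,|\cP_E(\phi)|^2\,\mathcal{D}_E^S(\pi,s)}{N(\ff_\eta^S)^{s-1}},
\]
where each summand carries an $E$-dependent Euler product $\mathcal{D}_E^S(\pi,s)=\prod_{v\notin S}\mathcal{D}_{E_v}(\pi_v,s)$. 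A naive Tauberian argument applied to $\sD$ gives the wrong answer because these correction factors are not constant in $E$. The paper removes them via the Datskovsky--Wright filtering process (\S\ref{sec:proof}): enlarge $S$ to $T$, apply Tauberian to obtain \eqref{eq:tauberian}, squeeze the desired sum between upper and lower bounds using positivity of the coefficients (which requires the Blomer--Brumley bound, Theorem~\ref{thm:BB}), and let $T\to\Sigma$. Your proposal contains no mechanism for this, and your claimed identification ``$Z(s)=$ a prehomogeneous zeta function'' cannot hold on the nose.

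\textbf{(3) Location of the pole and local factors.} In the paper's normalization the pole is at $s=\tfrac32$, not $s=1$; the residue is computed in Theorem~\ref{thm:re} via a Godement--Jacquet integral, and the unramified factor $1-q_v^{-3}-\tfrac{q_v-1}{q_v+1}q_v^{-3}\lambda_v^2$ emerges only \emph{after} filtering, from \eqref{eq:20200904} combined with Corollary~\ref{cor:fPun}. It is not a direct output of an unramified zeta-integral computation as you suggest.
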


For a quadratic \'etale algebra $\cE'_v$ over $F_v$,  we set 
    \[
    \kappa_{\cE'_v}(\pi_v)\coloneqq\frac{\zeta_{F_v}(2)
    L(\tfrac12, \pi_v\otimes\eta_{\cE'_v})}{2c_v\, L(1, \pi_v, \Ad)}.
    \]
Then the Euler factor for $v\not\in S$ of the above mean value formula satisfies
    \[
    1-q_v^{-3}-\frac{q_v-1}{q_v+1}q_v^{-3}\lambda_v^2
    =\sum_{\cE'_v}
    \frac{\kappa_{\cE'_v}(\pi_v)}{N(\ff_{\cE'_v})}.
    \]
Here,  $\cE'_v$ runs through all quadratic \'etale algebras over $F_v$.
We do not yet have conceptual understanding of this expression. 

By using Waldspurger's formula for $|\cP_E(\phi)|^2$, we can rewrite this result as a mean value theorem for the special values of the twisted $L$-functions.

\begin{thm}[A special case of Corollary \ref{cor:mvfL}]\label{thm:mainintro2}
Let $S$ be an arbitrary finite set of places of $F$.
We allow the possibility that $S$ is empty.
Suppose that $L(\tfrac12, \pi)\neq0$.
The limit
    \[
    \lim_{x\to\inf} \, x^{-1} \sum_E
    L(\tfrac12, \pi\otimes\eta_E)
    \] 
exists,  where the sum is over $E\in X(D,  \cE_S)$ such that $N(\ff_E^S)<x$.
The value of the above limit equals
    \[
     2 \, c_F^2 |\Delta_F|^\frac12 \,\frac{L(1, \pi, \Ad)}{\zeta_F(2)}
     \prod_{v\in S}\kappa_{\cE_v}(\pi_v)\cdot
      \prod_{v\notin S}\left(\sum_{\cE'_v}
     \frac{\kappa_{\cE'_v}(\pi_v)}{N(\ff_{\cE'_v})}\right).
    \]
\end{thm}

As corollaries of these results, we record two mean value theorems in the following special cases:
\begin{itemize}
\item[(i)] The case where $\phi$ corresponds to a Hecke eigenform $f\in S_k(\SL_2(\Z))$.
In this case, $F=\Q$ and $D$ is the split quaternion algebra.

\item[(ii)] The case where $F=\Q$ and $D$ is a quaternion division algebra of odd prime discriminant.
\end{itemize}
Appendix \ref{sec:appendix} by Shun'ichi Yokoyama and the authors provides numerical examples for these two special cases using \texttt{Magma} \cite{BCP}.

We describe one consequence of the explicit mean value theorem for the case (i).
Let $k$ be an even positive integer and $f\in S_k(\SL_2(\Z))$ a normalized Hecke eigenform.
We denote by $a_n$ the $n$-th Fourier coefficient of $f$ and by $\langle\cdot,\cdot\rangle$ the Petersson inner product.
Let $\sigma=\otimes_v \sigma_v$ be the cuspidal automorphic representation of $\GL_2(\A_\Q)$ generated by $f$.
Theorem \ref{thm:mainintro2} in this case reads as follows.

\begin{thm}[Corollary \ref{cor:mvfhol}]
The limit
    \[
    \lim_{x\to\inf} x^{-1}
    \sum_E  L(\tfrac{1}{2},\sigma\otimes\eta_E) 
    \]
exists, where the sum is over real quadratic fields $E$ with $|\Delta_E|<x$.
The value of the above limit equals  
    \[
    \frac{6(8\pi)^\frac k2\Gamma(\frac k2)}{\pi\, \Gamma(k)} \langle f, f \rangle
    \prod_{p}\left\{1-p^{-3}-\frac{p-1}{p+1}p^{-k-1}a_p^2 \right\},
    \]
where the product is over all primes.
\end{thm}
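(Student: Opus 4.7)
The plan is to deduce this theorem from Theorem~\ref{thm:mvfhol} (the holomorphic specialization of the mean value theorem for toric periods) together with Waldspurger's formula. I would take $F=\Q$, $D=M_2(\Q)$ split, $\phi\in\pi$ the vector corresponding to the newform $f$, $S=\{\infty,2\}$, and $\cE_\infty=\R\times\R$. The condition $E_\infty\simeq\cE_\infty$ in the general theorem selects exactly the real quadratic fields (together with the split algebra $\Q\times\Q$, an isolated $x$-independent term killed by the $x^{-1}$ normalization). Since I want to sum over all real quadratic fields with $d_E<x$ regardless of their splitting behavior at $2$, I would apply Theorem~\ref{thm:mvfhol} once for each of the eight isomorphism classes $\cL_2$ of quadratic étale algebras over $\Q_2$ and add the resulting limits.

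The reindexing uses $d_E=N(\ff_{\cL_2})\,N(\ff_\eta^S)$ when $E_2\simeq\cL_2$, so the bound $d_E<x$ becomes $N(\ff_\eta^S)<x/N(\ff_{\cL_2})$, scaling the corresponding summand's limit by $1/N(\ff_{\cL_2})$. Waldspurger's identity
\[
|\cP_E(\phi)|^2 \;=\; C\,\frac{\zeta^S(2)\,L^S(\tfrac12,\pi)\,L^S(\tfrac12,\pi\otimes\eta_E)}{L^S(1,\pi,\Ad)\,L^S(1,\eta_E)}\,\alpha_{E,S}(\phi)
\]
then rewrites the left-hand side of Theorem~\ref{thm:mvfhol} as $L(\tfrac12,\pi\otimes\eta_E)$ multiplied by a constant independent of $E$, so one ends up with a mean of $L(\tfrac12,\pi\otimes\eta_E)$ as desired. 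The weighted sum over $\cL_2$ collapses via
\[
\sum_{\cL_2}\frac{\kappa_{\cL_2}(\pi_2)}{N(\ff_{\cL_2})} \;=\; 1-2^{-3}-\tfrac{1}{3}\cdot 2^{-k-1}\,a_2^2,
\]
the identity stated just after the $L$-function mean value theorem above, now applied at the dyadic place where $\pi_2$ is unramified. This extends the Euler product over $p\notin S$ appearing in Theorem~\ref{thm:mvfhol} to an Euler product over every prime.

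Finally I would assemble the overall constant from $2\,c_F^2|\Delta_F|^{1/2}=2$, $\zeta(2)=\pi^2/6$, the archimedean factor $\kappa_{\cE_\infty}(\pi_\infty)$ computed via $L(\tfrac12,\pi_\infty)=\Gamma_\C(k/2)$, $L(1,\pi_\infty,\Ad)=\Gamma_\C(k)\Gamma_\R(2)$, $\zeta_\R(2)=1/\pi$ for the weight-$k$ holomorphic discrete series, and the classical formula expressing $L(1,\pi,\Ad)$ as an explicit constant multiple of $\langle f,f\rangle$. Collecting the $\Gamma$-factors then yields exactly $\frac{6(8\pi)^{k/2}\Gamma(k/2)}{\pi\,\Gamma(k)}\langle f,f\rangle$. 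The main technical obstacle is the dyadic identity above: one must enumerate the eight quadratic étale algebras over $\Q_2$, compute their conductors and local $L$-factors $L(\tfrac12,\pi_2\otimes\eta_{\cL_2})$ and $L(1,\pi_2,\Ad)$, and check that the weighted sum still yields the same Euler factor as at odd primes (where the analogous enumeration involves only four characters); a secondary point is keeping the archimedean Haar measure normalizations consistent, so that the proportionality constant between $L(1,\pi,\Ad)$ and $\langle f,f\rangle$ comes out compatible with the paper's conventions.
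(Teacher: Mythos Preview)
Your approach is correct and matches the paper's own (one-line) derivation: sum Theorem~\ref{thm:mvfhol} over the eight classes $\cL_2\in X(D_2)$, rescale by $1/N(\ff_{\cL_2})$ to pass from the bound $a_E<x$ to the bound on the fundamental discriminant $\fd_E=N(\ff_{\cL_2})\cdot a_E<x$, and then use the dyadic identity $\sum_{\cL_2}\kappa_{\cL_2}(\pi_2)/N(\ff_{\cL_2})=1-2^{-3}-\tfrac13\cdot 2^{-k-1}a_2^2$ (which is just the $v=2$ instance of \eqref{eq:0815}, valid here because $\pi_2$ is unramified at level~$1$) to extend the Euler product to all primes. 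One simplification: Theorem~\ref{thm:mvfhol} is already stated in terms of $L(\tfrac12,\pi\otimes\eta_E)$ (it is derived from Corollary~\ref{cor:mvfL}, which has Waldspurger built in), so you do not need to invoke Waldspurger separately or recompute the archimedean constants---everything you need is already packaged in the constant $\tfrac{2}{\pi}\,(8\pi)^{k/2}\Gamma(k/2)\Gamma(k)^{-1}\langle f,f\rangle$ and the factor $c_{\cE_2}(\phi_2)$.
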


\subsection{Methods and related works}\label{sec:1-2}
We prove Theorem \ref{thm:mainintro} by applying Tauberian theorem to the prehomogeneous zeta function with toric periods which is introduced in \cite{SW}.
As we see in \cite{SW}*{Theorem 4.10} (or Theorem \ref{thm:Dirichlet} of this paper), our zeta function is roughly a counting function of $L(1, \eta_E)^2\alpha_{E, S}(\phi)^{-1}|\cP_E(\phi)|^2$.
Based on this interpretation, we follow the general strategy toward the density theorems discussed by Wright and Yukie in the introduction to \cite{WY}.
Since the global zeta function is an approximation of the counting function, we need to carry out the filtering process developed by Datskovsky and Wright \cite{DW}.
The main step of the filtering process is to find a uniform estimate for the contribution of the local zeta functions to the residue of the global zeta function.
We emphasize that we use the bound of Blomer and Brumley \cite{BB} for Hecke eigenvalues to obtain this uniform estimate. 

When $F=\Q$, similar results are obtained by many researchers.
Here, we mention only a few of them.
Radziwi\l\l\, and Soundararajan \cite{RS} studied the mean values for quadratic twists of Hasse-Weil $L$-functions associated with elliptic curves.
Soundararajan and Young \cite{SY} obtained second moments for quadratic twists of modular $L$-functions. 
Higher moments for products of automorphic $L$-functions are studied by Sono \cite{Sono}.
These results concern estimates of the order and error terms.
There are several researches which aimed to determine the leading term.
See \cite{GV} and \cite{KZ}. 
In \cite{CFKRS}, a conjectural formula for the leading term was proposed.
Their formula looks quite different from ours and we did not confirm that our formula coincides with theirs.

For a general number field, several results are obtained by analyzing a multiple Dirichlet series. 
Friedberg and Hoffstein \cite{FH} proved non-vanishing of central $L$-values for infinitely many quadratic twists.   
Their results also imply the existence of the density and its positivity.
Bump, Friedberg and Hoffstein obtained an explicit mean value formula with an error term estimate for quadratic twists of $L$-functions for $\GL_3(\A_\Q)$ times certain correction terms \cite{BFH}*{Theorem 3.8}.
One may be able to do the same for quadratic twists of $L$-functions for $\GL_2(\A_F)$ for a general number field $F$ using the technique developed in \cite{FF}, \cite{BFH} and \cite{FHL}.
However, it is not clear whether one can recover Theorem \ref{thm:mainintro} by this approach since their sum involves correction terms which contain information about ramification of the automorphic representation.
An important step of the proof of Theorem \ref{thm:mainintro} is to show that the ramification of the automorphic representation is reflected in the mean value as a local period.

Compared with others, the approach using prehomogeneous zeta functions has several advantages:
\begin{itemize}
\item[(1)] One can compute the mean value explicitly and the resulting formula is so simple that it can be numerically checked.
\item[(2)] One can consider contributions of quadratic extensions with given local behaviors at arbitrarily chosen finite number of places.
\item[(3)] The mean value formula has an Euler product, which visualizes the contribution of each local component of the automorphic representation.
\end{itemize}

A prototype of our zeta function was first introduced by Sato \cite{Sato} and its local theory is extensively studied by Wen-Wei Li (\cite{Li1}, \cite{Li2}, \cite{Li3} and \cite{Li4}) in a quite general setting.
The results of this paper and the previous one \cite{SW} are first applications to the non-vanishing of automorphic periods and $L$-functions.

\subsection{Organization of the paper}\label{sec:1-3}
The organization of the paper is as follows.
In Section \ref{sec:mvf}, after introducing notations we use throughout the paper, we state our main theorem and deduce from it the above mentioned two special cases.
In Section \ref{sec:pvzeta}, we briefly recall the definition and necessary properties of the prehomogeneous zeta function with toric periods from \cite{SW}.
The global zeta function can be written as a Dirichlet series with appropriate weight factors.
We show that this Dirchlet series has a simple pole at $s=\tfrac32$ unless it is identically zero and determine the residue. 
The residue has an Euler product and we see that each local factor is essentially the local period $\alpha_{\cE_v}(\phi_v, \phi_v)$ which appears in Waldspurger's formula.
In Section \ref{sec:proof}, we apply the filtering process and prove the main theorem.
Finally in Appendix \ref{sec:appendix}, we provide numerical examples for the mean values of the special values of the twisted $L$-functions of the elliptic modular cusp form of level $1$, weight $12$ and the toric periods of an algebraic modular form.

\vspace{2mm}
\noindent\textbf{Acknowledgments.} 
The authors would like to thank Takashi Taniguchi, Masao Tsuzuki, Solomon Friedberg and Jeffrey Hoffstein for helpful comments.
The authors are grateful to the anonymous referees for helpful comments that improved the previous version of this article.
M.S. was partially supported by JSPS Grant-in-Aid for Early-Career Scientists No.22K13891.
S.W. was partially supported by JSPS Grant-in-Aid for Scientific Research (C) No.20K03565, (B) No.21H00972.
S.Y. was partially supported by JSPS Grant-in-Aid for
Scientific Research (C) No.20K03537.


\section{Mean value theorems}\label{sec:mvf}


\subsection{Preliminaries}


\subsubsection{Number fields}\label{sec:n2}

Let $F$ be a number field, $\fo_F$ its ring of integers and $\Sigma$ the set of all places of $F$. 
For $v\in\Sigma$, let $F_v$ denote the completion of $F$ at $v$.
In particular, when $F=\Q$ and $v=\infty$ is its unique infinite place, $\Q_\infty$ denotes the real number field $\R$.
If $v$ is a finite place, we write $\fo_v=\fo_{F,v}$ for the ring of integers of $F_v$, $\varpi_v$ for a prime element of $\fo_v$ and $q_v$ for the order of the residue field $\fo_v/\varpi_v\fo_v$.

Let $\A=\A_F$ denote the adele ring of $F$ and $\A_\fin=\A_{F, \fin}$ its finite part.
Set $F_\inf=\prod_{v|\inf}F_v$, where the product runs over the set of infinite places of $F$.
We write by $r_1$ (resp. $r_2$) the number of real (resp. complex) places of $F$.

For an algebra $E$ over $F$ and $v\in\Sigma$, we write $E_v=E\otimes_FF_v$.
Suppose that $E$ is a quadratic \'etale algebra over $F$.
We write the maximal order of $E$ (resp.\,$E_v$) by $\fo_E$ (resp.\,$\fo_{E,  v}$).
Let $\eta=\eta_E$ be the corresponding quadratic character on $\A^\times$.
For $v\in\Sigma$, let $\eta_v=\eta_{E_v}$ be the quadratic character on $F_v^\times$ corresponding to $E_v$.

Let $\d x$ be the Haar measure on $\A$ normalized so that $\vol(\A/F)=1$. 
For $v\in\Sigma$, we fix a Haar measure $\d x_v$ on $F_v$ as follows.
If $v$ is finite, $\d x_v$ is normalized so that $\vol(\fo_v)=1$. 
If $v$ is a real place,  let $\d x_v$ denote the ordinary Lebesgue measure on $\R$. 
For a complex place $v$, we set $\d x_v=2 \d x_{1,v} \d x_{2,v}$,  where $x_v=x_{1,v}+x_{2,v}\sqrt{-1}\in \C$ and $\d x_{i,v}$,  $i=1,  2$ are the Lebesgue measures on $\R$. 
Then, we obtain $\d x = |\Delta_F|^{-1/2} \prod_{v\in\Sigma} \d x_v$, where $|\Delta_F|$ is the absolute discriminant of $F/\Q$.

Let $|\cdot|_v$ denote the normalized absolute value of $F_v$ and $|\cdot|$ the idele norm on $\A^\times$.
Take a non-trivial additive character $\psi_\Q$ on $\A_\Q/\Q$, and set $\psi_F=\psi_\Q \circ \Tr_{F/\Q}$. 
Then, $\d x$ is the self-dual Haar measure with respect to $\psi_F$. For each $v\in \Sigma$, we write the restriction of $\psi_{F}$ to $F_v$ by $\psi_{F_v}$.
For each place $v\in\Sigma$, let $\d^\times x_v$ denote the Haar measure $\d^\times x_v=c_v\, |x|_v^{-1}\d x_v$ on $F_v^\times$, where
    \[
    c_v=
    \begin{cases} 
    1                            & \text{if $v$ is an infinite place}, \\ 
    (1-q_v^{-1})^{-1} & \text{if $v$ is a finite place}. 
    \end{cases}
    \]
Define the Haar measure on $\A^\times$ by $\d^\times x=|\Delta_F|^{-1/2}\prod_{v\in\Sigma}\d^\times x_v$, and normalize the Haar measure $\d^1 y$ on $\A^1=\{x\in\A^\times \mid |x|=1\}$ so that $\d^\times x$ equals $\d^1 y\times\d t/t$ under the isomorphism $\A^1\times\R_{>0}\xrightarrow{\sim}\A^\times$,  $(y,  t)\mapsto yt$.
Here,  $\d t$ is the Lebesgue measure on~$\R$ and $\d t/t$ is the Haar measure on $\R_{>0}$.

Let $\zeta_F(s)$ (resp. $\zeta_{F,\fin}(s)$) denote the completed (resp. the finite part of) Dedekind zeta function of $F$.
For a character $\chi$ on $F^\times\R_{>0}\bs \A^\times$, write $L(s,\chi)$ (resp. $L_\fin(s,\chi)$) for the completed (resp. the finite part of) $L$-function of the Hecke character $\chi$.
When $\chi=\mathbf{1}$ is the trivial character, $L_\fin(s,\mathbf{1})=\zeta_{F,\fin}(s)$ has a simple pole at $s=1$ with residue $c_F\coloneqq\vol(F^\times\bs \A^1)$.
By abuse of notation, we write $L_\fin(1, \mathbf{1})=c_F$ so that $L_\fin(1,  \chi)$ is defined for any Hecke character $\chi$.


\subsubsection{Quaternion algebras}\label{sec:n1}

Let $F$ be a field of characteristic zero and $D$ a quaternion algebra over $F$.
When $D$ is not division, we identify $D$ with $\M_2(F)$ so that the main involution $\iota$ of $D$ is given by 
   $x^\iota=
    \begin{psmallmatrix} 
    0  & 1 \\ 
    -1 & 0 
    \end{psmallmatrix}\,{}^t\!x 
    \begin{psmallmatrix} 
    0 & -1 \\ 
    1 & 0 
    \end{psmallmatrix}$. 
When $D$ is division and $E$ is its subfield which is a quadratic \'etale algebra over $F$, we can find an element $b\in F^\times$ so that $D$ is isomorphic to the matrix algebra
    \[
    \left\{  
    \begin{pmatrix} 
    \xi                        & \eta \\ 
    b\overline{\eta} & \overline{\xi} 
    \end{pmatrix} \in M_2(E) \mid  \xi,\eta\in E \right\},
    \]
where $\bar{\cdot}$ is the unique non-trivial $F$-algebra automorphism of $E$.
The main involution $\iota$ on $D$ is given by 
    $\left(
    \begin{smallmatrix} 
    \xi                        & \eta \\ 
    b\overline{\eta} & \overline{\xi} 
    \end{smallmatrix}\right)^\iota=\left(
    \begin{smallmatrix} 
    \overline{\xi}        & -\eta \\ 
    -b\overline{\eta} & \xi  
    \end{smallmatrix}\right)$.
We write the reduced norm and the reduced trace by $\det(x)= x\, x^\iota\in F$  and $\Tr(x)=x+x^\iota\in F$ for $x\in D$, respectively.

From the Skolem-Noether theorem, two quadratic \'etale $F$-subalgebras of $D$ are isomorphic if and only if they are conjugate to each other. 
Let $X(D)$ denote a set of representatives of isomorphism classes of quadratic \'etale $F$-subalgebras of $D$.
For each $\cE\in X(D)$, choose an element $\delta_\cE \in D$ so that $\cE=F+F\delta_\cE$ and $\Tr(\delta_\cE)=0$.
Set $d_\cE=\delta_\cE^2\in F$. 
Note that we have $\det(\delta_\cE)=-d_\cE$.

Suppose $F$ is a number field and $E\in X(D)$.
For a place $v\in\Sigma$, $D_v=D\otimes_FF_v$ is a quaternion algebra over $F_v$ and $E_v=E\otimes_FF_v$ is an $F_v$-subalgebra of $D_v$.
Unless otherwise mentioned, we assume $\delta_{E_v}=\delta_E$ and $d_{E_v}=d_E$ under the natural embeddings $ E\hookrightarrow E_v$ and $D\hookrightarrow D_v$.


\subsubsection{Measures on algebraic groups}\label{sec:measure1}

From now on, let $F$ be a number field.
For an algebraic group $G$ over $F$, we write $G(F_\infty)$, $G(\A)$ and $G(\A_\fin)$ by $G_\infty$, $G_\A$ and $G_{\A_\fin}$, respectively.
Let $K=\prod_v K_v$ be a maximal compact subgroup  of $\GL_2(\A)$,  where
    \[
    K_v=
        \begin{cases}
        \GL_2(\fo_v) & \text{ if $v$ is finite,} \\
        \O(2) & \text{ if $v$ is real,} \\
        \U(2) & \text{ if $v$ is complex}.
        \end{cases}
    \]
Let $\d k_v$ be the Haar measure on $K_v$ normalized so that $\vol(K_v)=1$ and define the Haar measure $\d k$ on $K$ by $\d k=\prod_v\d k_v$.

Let $D$ be a quaternion algebra over $F$ and $Z$ the center of $D^\times$ and $\d z$ the Haar measure on $Z(\A)=\A^\times$ given by $\d z=\frac{\d t}{t} \, \d^1 a$, where $z=t^{\frac12}a I_2$ with $t\in\R_{>0}$ and $a\in\A^1$.
Note that $\d z=2\d^\times x$, where $d^\times x$ is the measure on $\A^\times$ chosen in \S\,\ref{sec:n2}.
Let $\d g$ be the Tamagawa measure on $D_\A^\times$. 
The algebraic group $D^\times/ F^\times$ over $F$ will be denoted by $PD^\times$.
Then we have $\vol(PD^\times\bs PD^\times_\A)=c_F^{-1}$ with respect to the quotient measure $\frac{\d g}{\d z}$.
Let $\langle \cdot,\cdot\rangle$ be the inner product on $L^2(PD^\times\bs PD^\times_\A)$ given by
    \begin{equation}\label{eq:innprod}
    \langle \varphi,\varphi'\rangle=\int_{PD^\times\bs PD^\times_\A} 
    \varphi(g)\, \overline{\varphi'(g)} \frac{\d g}{\d z}, 
    \hspace{25pt} \varphi, \varphi'\in L^2(PD^\times\bs PD^\times_\A).
    \end{equation}

Let $\d x$ denote the Tamagawa measure on $D$ and $\d x_v$ the local Tamagawa measure on $D_v=D\otimes F_v$ for $v\in\Sigma$.
We have
    \begin{equation}\label{eq:tama}
    \d x= |\Delta_F|^{-2}\prod_{v\in \Sigma} \d x_v.
    \end{equation}
For a quadratic \'etale algebra $\cE_v$ over $F_v$,  we write an element $h_{\cE_v}\in\cE_v$ in the form $h_{\cE_v}=a_v+\delta_{\cE_v}b_v$ with $a_v,  b_v\in F_v$.
Define a Haar measure $\d h_{\cE_v}$ on $\cE_v^\times$ by
    \begin{equation}\label{eq:localmeah}
    \d h_{\cE_v}= c_v\, L(1,\eta_{\cE_v}) \, 
    \frac{\d a_v \, \d b_v}{|a_v^2-d_{\cE_v} b_v^2|_v},        
    \end{equation}
where $\eta_{\cE_v}$ is the quadratic character on $F^\times_v$ corresponding to $\cE_v$ and $\d a_v$,  $\d b_v$ are the Haar measures on $F_v$ chosen in \S\,\ref{sec:n2}.
For a quadratic \'etale algebra $E$ over $F$, the Tamagawa measure $\d h_E$ on $(\A_F\otimes_FE)^\times$ is given by
    \[
    \d h_E=\frac{1}{c_F\, L(1,\eta_E)\, |\Delta_F|} \prod_v \d h_{E_v}.
    \]
Here, $\eta_E=\otimes_{v\in\Sigma} \eta_{E_v}$ denotes the quadratic character on $\A_F^\times$ corresponding to $E$.


\subsubsection{Toric periods}\label{sec:w0504}

Throughout this paper, we assume that $\pi=\otimes_{v\in\Sigma}\pi_v$ is an irreducible cuapidal automorphic representation of $PD^\times_\A$ which is not $1$-dimensional.
For each place $v\in\Sigma$, take a $D_v^\times$-invariant non-degenerate Hermitian pairing $\langle\cdot,\cdot\rangle_v$ on $\pi_v$ so that we have
    \[
    \langle\cdot,\cdot\rangle=\prod_v\langle\cdot,\cdot\rangle_v.
    \]
For $\cE_v\in X(D_v)$,  let $\d h_v=\frac{\d h_{\cE_v}}{\d^\times z_v}$ be the quotient measure on $F_v^\times\bs\cE_v^\times$.  
The integral
    \[
    \alpha_{\cE_v}(\varphi_v,\varphi'_v)=\int_{F_v^\times\bs \cE_v^\times}
    \langle\pi_v(h_v)\varphi_v, \varphi'_v\rangle_v\,\d h_v, 
    \hspace{10pt} \varphi_v, \varphi'_v\in\pi_v
    \]
converges absolutely and defines an element of $\Hom_{\overline{\cE_v^\times}\times \overline{\cE_v^\times}}(\pi_v\boxtimes\bar{\pi}_v, \C)$, 
Here we write $\overline{A}$ for the image of $A(\subset D_v)$ under the projection $D_v^\times\to PD_v^\times$.
Let $E\in X(D)$.
We define the Haar measure $\d h$ on $\A_F^\times E^\times\bs \A_E^\times$ by $\d h= (c_F^{-1} \d^\times z)\bs \d h_E$.
A linear form $\cP_E$ on $\pi$ is defined by 
    \[
    \cP_E(\varphi)=\int_{\A_F^\times E^\times\bs \A_E^\times}\varphi(h) \, \d h , 
    \hspace{10pt} \varphi \in \pi.
    \]
We say that $\pi$ is $E^\times$-distinguished if $\cP_E$ is not identically zero.
The linear form $\cP_E$ is called a toric period.
Similarly for $\cE_v\in X(D_v)$, we say that $\pi_v$ is $\cE_v^\times$-distinguished if $\Hom_{\overline{\cE_v^\times}}(\pi_v, \C)\neq0$.

Let $\phi=\otimes_{v\in\Sigma}\phi_v$ be a decomposable element of $\pi$.
In \cite{Wal2}, Waldspurger proved a formula which relates the toric period and the special values of automorphic $L$-functions: 
    \begin{equation}\label{eq:Wald}
    |\cP_E(\phi)|^2=\frac{1}{|\Delta_F|} \,
    \frac{\zeta_F(2)L(\frac12, \pi)L(\frac12, \pi\otimes\eta_E)}
    {L(1, \pi, \Ad)L(1, \eta_E)^2} \,
    \prod_v\alpha_{E_v}^\#(\phi_v, \phi_v).
    \end{equation}
Here, 
    \[
    \alpha_{E_v}^\#(\phi_v, \phi_v)=
    \frac{L(1, \pi_v, \Ad)L(1, \eta_{E_v})}
    {\zeta_{F, v}(2)L(\frac12, \pi_v)L(\frac12, \pi_v\otimes\eta_{E_v})} \,
    \alpha_{E_v}(\phi_v, \phi_v)
    \]
is the normalized local period.
Note that the choice of our Haar measure on $PD_\A^\times$ is $2c_F$ times the Tamagawa measure.
See also \cite{II}*{Section 6}.

Let $S$ be a finite subset of $\Sigma$ satisfying the following conditions.

\begin{condition}\label{condition}
For $v\not\in S$,  
\begin{itemize}
\item $v$ is a finite place which is not dyadic.
\item $D_v$ is split,  in particular $PD_v^\times\simeq\PGL_2(F_v)$.
\item $\pi_v$ is unramified and $\phi_v$ is the spherical vector, which is normalized so that $\langle \phi_v,\phi_v \rangle_v=1$. 
\item let $K_v$ be the maximal compact subgroup corresponding to $\PGL_2(\fo_v)$ under a fixed isomorphism $PD_v^\times\simeq\PGL_2(F_v)$.
\item for every $E\in X(D)$, we have $d_{E_v}\in\fo_v\setminus\varpi_v^2\fo_v$, and the maximal compact subgroup of $E_v^\times$ is contained in $K_v$ (that is, $\overline{\fo_{E,  v}^\times}\subset K_v$).
\end{itemize}
\end{condition}
Since we have $PD_\A^\times=PD^\times \prod_{v\in S}PD_v^\times \prod_{v\notin S}K_v$ for sufficiently large $S$, by taking a suitable $PD^\times$-conjugate, we may assume that our fixed embedding $E\hookrightarrow D$ satisfies the last condition of Condition \ref{condition}.

Since $\alpha_{E_v}^\#(\phi_v,  \phi_v)= 1$ for any $v\notin S$ (see Corollaries \ref{cor:fP} and \ref{cor:fPun}), the formula \eqref{eq:Wald} is equivalent to
    \begin{equation}\label{eq:Wald2}
    |\cP_E(\phi)|^2=\frac{1}{|\Delta_F|} \,
    \frac{\zeta_F^S(2) \, L^S(\frac12, \pi) \, L^S(\frac12, \pi\otimes\eta_E)}
    {L^S(1, \pi, \Ad)\, L^S(1, \eta_E)\, L(1,\eta_E)} \,
    \alpha_{E,  S}(\phi),
    \end{equation}
where $\alpha_{E, S}(\phi)=\prod_{v\in S}\alpha_{E_v}(\phi_v,\phi_v)$.
Note that if both sides of \eqref{eq:Wald2} is not equal to zero,  then $\alpha_{E,  S}(\phi)^{-1}|\cP_E(\phi)|^2$ depends only on the isomorphism class of $E$ and is independent of its realization as a subalgebra of $D$.

Take an element $\cE_S=(\cE_v)_{v\in S}\in X(D_S)$.  
Let $E\in X(D)$ and suppose that $E_v$ is isomorphic to $\cE_v$ for each $v\in S$. 
In the case $\alpha_{E, S}(\phi)\neq0$, $\alpha_{E, S}(\phi)^{-1}|\cP_E(\phi)|^2$ is explicitly given by \eqref{eq:Wald2}.  
In the case $\alpha_{E, S}(\phi)=0$, by abuse of notation, from the viewpoint of \eqref{eq:Wald2} we formally define 
    \[
     \alpha_{E, S}(\phi)^{-1}|\cP_E(\phi)|^2\coloneqq \frac{1}{|\Delta_F|} \,
    \frac{\zeta_F^S(2) \, L^S(\frac12, \pi) \, L^S(\frac12, \pi\otimes\eta_E)}
    {L^S(1, \pi, \Ad)\, L^S(1, \eta_E)\, L(1,\eta_E)}  
    \] 
    when $\pi$ is $E^\times$-distinguished, and
    \[
     \alpha_{E, S}(\phi)^{-1}|\cP_E(\phi)|^2\coloneqq 0 \qquad \text{when $\pi$ is not $E^\times$-distinguished.}
    \]
Note that $\alpha_{E, S}(\phi)=0$ implies $\cP_E(\phi)=0$ even if $\pi$ is $E^\times$-distinguished.
In addition, by abuse of notation, we define
    \begin{equation}\label{eq:alphaEv}
         \alpha_{E}^{\cE_S}(\phi) |\cP_E(\phi)|^2 \coloneqq \alpha_{E,S}(\phi)^{-1}|\cP_E(\phi)|^2\times \alpha_{\cE_S}(\phi,\phi) \times \prod_{v\in S}\left|\frac{d_{\cE_v}}{d_{E_v}}\right|_v^{\frac12}
    \end{equation}
where $\alpha_{\cE_S}(\phi,\phi)\coloneqq\prod_{v\in S}\alpha_{\cE_v}(\phi_v,\phi_v)$. 
When $\alpha_{E, S}(\phi)\neq0$, we have
\[
\alpha_{E}^{\cE_S}(\phi) = \frac{\alpha_{\cE_S}(\phi,\phi)}{\alpha_{E,S}(\phi)} \prod_{v\in S}\left|\frac{d_{\cE_v}}{d_{E_v}}\right|_v^{\frac12}. 
\]
Note that $\alpha_{E}^{\cE_S}(\phi)|\cP_E(\phi)|^2$ is non-negative (see Lemma \ref{lem:non-negative}).

\if0
Let $S$ be an arbitrary finite subset of $\Sigma$ and $E\in X(D)$.
For $v\in S$, take $\cE_v\in X(D_v)$ so that $\cE_v$ is isomorphic to $E_v$. 
If $\alpha_{E, S}(\phi)\coloneqq\prod_{v\in S}\alpha_{E_v}(\phi_v, \phi_v)\neq0$, we put
    \begin{equation}\label{eq:alphaEv}
    \alpha_{E_v}^{\cE_v}(\phi_v)=
    \ds\frac{\alpha_{\cE_v}(\phi_v,\phi_v)}{\alpha_{E_v}(\phi_v,\phi_v)} \, 
        \left|\frac{d_{\cE_v}}{d_{E_v}}\right|_v^{\frac12}, \qquad
         \alpha_{E}^{\cE_S}(\phi)=\prod_{v\in S}\alpha_{E_v}^{\cE_v}(\phi_v).
    \end{equation}
Note that $\alpha_{E, S}(\phi)=0$ implies $\cP_E(\phi)=0$.
For convenience, we set 
    \[
    \alpha_E^{\cE_S}(\phi)|\cP_E(\phi)|^2=0, \qquad
    \alpha_{E, S}(\phi)^{-1}|\cP_E(\phi)|^2=0
    \] 
in the case $\alpha_{E, S}(\phi)=0$.
\fi

\subsection{Mean value theorem for toric periods}\label{sec:mvfgeneral}

Let $\phi=\otimes_v\phi_v$ be a decomposable element in $\pi$. 
We take a finite subset $S$ of $\Sigma$  and a set $X(D)$ of representatives of isomorphism classes of quadratic \'etale $F$-subalgebras of $D$ so that $S$ satisfies Condition \ref{condition} for $\phi$ and all (but finitely many) $E\in X(D)$.

Put $X(D_S)\coloneqq\prod_{v\in S}X(D_v)$. 
We assume that there exists $\cE_S=(\cE_v)_{v\in S}\in X(D_S)$ such that $\alpha_{\cE_v}(\phi_v, \phi_v)\neq0$ for any $v\in S$.
In \cite{SW}*{Theorem 1.7}, we showed that this is the case if $L(\tfrac12, \pi)\neq0$.
Fix such $\cE_S$.
Let $X(D,\cE_S)$ be the set of $E\in X(D)$ such that $E_v\simeq\cE_v$ for each $v\in S$.

\begin{thm}\label{thm:main}
Let $\phi=\otimes_v\phi_v$,  $S\subset\Sigma$ and $\cE_S\in X(D_S)$ be as above.
In particular,  we assume that Condition \ref{condition} is satisfied.
For $t>0$, we have
    \begin{align}\label{eq:mvf}
    & \lim_{x\to\inf} \, x^{-t}                    
    \sum_{\substack{E\in X(D,\cE_S),\\ N(\ff_E^S)<x}} 
     N(\ff_E^S)^{t-1} \, L(1,\eta_E)^2 \, \alpha_{E, S}(\phi)^{-1}|\cP_E(\phi)|^2 \\ 
    &\hspace{50pt} = L(\tfrac12,\pi) \,
     \frac{2 \, c_F^2}{ t \,  |\Delta_F|^{\frac{1}{2}}} \,
     \prod_{v\in S}\frac{L(1, \eta_{\cE_v})}{2c_v\, L(\frac12, \pi_v)}\cdot
     \prod_{v\notin S}
     \left\{ 1-q_v^{-3}-\frac{q_v-1}{q_v+1}q_v^{-3}\lambda_v^2  \right\}.
      \nonumber
    \end{align}
Here, $\ff_{E_v}$ is the conductor of the quadratic character $\eta_{E_v}$, $N(\ff_E^S)=\prod_{v\notin S}N(\ff_{E_v})$ and $\lambda_v=q_v^{\frac12}(\alpha_v+\alpha_v^{-1})$,   where $\alpha_v\in\C^\times$ is the Satake parameter of $\pi_v$. 
Note that the mean value on the left-hand side depends only on the equivalence class of $\cE_S$ and independent of the choice of the representative element $\cE_S$. 
\end{thm}
\begin{rem}\label{rem:mvf}
    The factor $\alpha_{E, S}(\phi)^{-1}|\cP_E(\phi)|^2$ is independent of the choice of $\otimes_{v\in S}\phi_v$, and hence the both sides of \eqref{eq:mvf} are also so.  
\end{rem}
\begin{rem}
For any $v\in\Sigma$ and a quadratic \'etale algebra $\cE_v'$ over $F_v$,  we set 
    \[
    \kappa_{\cE'_v}(\pi_v)\coloneqq\frac{\zeta_{F_v}(2)
    L(\tfrac12, \pi_v\otimes\eta_{\cE'_v})}{2c_v\, L(1, \pi_v, \Ad)}.
    \]
A direct computation shows that the Euler factor for $v\not\in S$ of \eqref{eq:mvf} satisfies 
    \[
    1-q_v^{-3}-\frac{q_v-1}{q_v+1}q_v^{-3}\lambda_v^2
    =\sum_{\cE'_v}\frac{\kappa_{\cE'_v}(\pi_v)}{N(\ff_{\cE'_v})},
    \]
where $\cE'_v$ runs through all quadratic \'etale algebras.
\end{rem}

Substituting Waldspurger's formula \eqref{eq:Wald2},  one can rewrite \eqref{eq:mvf} as a mean value formula for $L$-values.
\begin{cor}\label{cor:mvfL}
We keep the notation as in Theorem \ref{thm:main}.
Suppose that $L(\tfrac12, \pi)\neq0$.
For $t>0$, we have
    \begin{align}\label{eq:mvfL}
    &\lim_{x\to\infty}x^{-t}\sum_{\substack{E\in X(D, \cE_S)\\ N(\ff_E^S)<x}}
    N(\ff_E^S)^{t-1}L(\tfrac12, \pi\otimes\eta_E) \\
      &\hspace{50pt}
      =\frac{2\, c_F^2|\Delta_F|^\frac12}{t} \, \frac{L(1, \pi, \Ad)}{\zeta_F(2)}\, 
      \prod_{v\in S}\kappa_{\cE_v}(\pi_v)\cdot
      \prod_{v\notin S}\left(\sum_{\cE'_v\in X(D_v)}
     \frac{\kappa_{\cE'_v}(\pi_v)}{N(\ff_{\cE'_v})}\right).
     \nonumber
     \end{align}
\end{cor}

\begin{rem}\label{rem:dependence}
We examine the dependence on $S$ of the mean value formula \eqref{eq:mvfL} for $L$-values.
Suppose that $T$ is a finite subset of $\Sigma$ such that $S\subset T$.
Note that $T$ satisfies Condition \ref{condition} if $S$ does.
Recall that $\cE_S\in X(D_S)$ is fixed.
It is easily seen that
    \begin{align*}
    &\sum_{\substack{E\in X(D, \cE_S)\\ N(\ff_E^S)<x}}
    N(\ff_E^S)^{t-1}L(\tfrac12, \pi\otimes\eta_E) \\
    &=\sum_{\substack{\cL_T\in X(D_T) \\ \cE_S\subset\cL_T}}
    \left(\prod_{v\in T\setminus S}N(\ff_{\cL_v})\right)^{t-1}
    \sum_{\substack{E\in X(D, \cL_T)\\ 
    N(\ff_E^T)<\left(\prod_{v\in T\setminus S}N(\ff_{\cL_v})\right)^{-1}x}}
    N(\ff_E^T)^{t-1}L(\tfrac12, \pi\otimes\eta_E),
    \end{align*}
where $\cL_T=(\cL_v)_{v\in T}$ runs over elements in $X(D_T)$ such that $\cE_v=\cL_v$ for each $v\in S$.
Hence the left hand side of \eqref{eq:mvfL} becomes
    \[
    \sum_{\substack{\cL_T\in X(D_T) \\ \cE_S\subset\cL_T}}
    \left(\prod_{v\in T\setminus S}N(\ff_{\cL_v})\right)^{-1} \cdot
    \lim_{x\to\infty}x^{-t} \sum_{\substack{E\in X(D, \cL_T)\\ N(\ff_E^T)<x}}
    N(\ff_E^T)^{t-1}L(\tfrac12, \pi\otimes\eta_E).
    \]
Also,  we obviously have
    \begin{multline*}
    \prod_{v\in S}\kappa_{\cE_v}(\pi_v)\cdot
      \prod_{v\notin S}\left(\sum_{\cE'_v\in X(D_v)}
     \frac{\kappa_{\cE'_v}(\pi_v)}{N(\ff_{\cE'_v})}\right) \\
     =\sum_{\substack{\cL_T\in X(D_T) \\ \cE_S\subset\cL_T}}
     \left(\prod_{v\in T\setminus S}N(\ff_{\cL_v})\right)^{-1} \cdot
     \prod_{v\in T}\kappa_{\cL_v}(\pi_v)\cdot
      \prod_{v\notin T}\left(\sum_{\cL'_v\in X(D_v)}
     \frac{\kappa_{\cL'_v}(\pi_v)}{N(\ff_{\cL'_v})}\right).
    \end{multline*}
It follows that the mean value formula  \eqref{eq:mvfL} holds once we prove it for some finite set $T$ containing $S$.
This means that in order to prove Corollary \ref{cor:mvfL} (and equivalently,  to prove Theorem \ref{thm:main}),  we may assume that $S$ is sufficiently large.
Moreover,  Corollary \ref{cor:mvfL} holds for arbitrary finite set $S$ and in particular we can remove Condition \ref{condition}. 
Note,  however,  that this is not the case in Theorem \ref{thm:main} since we have the equivalence between Theorem \ref{thm:main} and Corollary \ref{cor:mvfL} only when $S$ satisfies Condition \ref{condition}.
\end{rem}


\subsection{Mean value theorem for elliptic modular forms}\label{sec:mvfhol}

In this subsection, we specialize Corollary \ref{cor:mvfL} to the case of elliptic modular forms of level $1$. 
Let $k$ be a non-negative even integer and $f\in S_{k}(\SL_2(\Z))$ be a weight $k$ normalized cuspidal Hecke eigenform.
Let $ f(z)=\sum_{n=1}^\infty a_nq^n$ be the Fourier expansion, where $q=e^{2\pi\sqrt{-1}z}$.
Let $\sigma=\otimes_v\sigma_v$ be the cuspidal automorphic representation of $\GL_2(\A_\Q)$ corresponding to $f$.
Note that $\lambda_p=p^{1-\frac{k}{2}}a_p$ for each prime $p$.

Note that $\sigma$ does not have non-trivial toric periods with respect to any imaginary quadratic fields since $\sigma_\infty$ does not have  non-zero $\SO(2)$-fixed vector.
Therefore we only consider toric periods with respect to real quadratic fields.
We can apply Corollary \ref{cor:mvfL} to this case with $F=\Q$,  $D=\M_2(\Q)$ and $S=\{2, \infty\}$.

For a quadratic field $E$,  recall that we fixed $d_E\in\Q$ such that $E=\Q(\sqrt{d_E})$ (see \S\,\ref{sec:n1}).
We further assume that $d_E$ is square-free and set 
    \begin{equation}\label{eq:a_E}
    a_E\coloneqq
        \begin{cases}
        |d_E| & \text{if $d_E\equiv 1,  3 \bmod 4$,} \\
        \frac{|d_E|}{2} & \text{otherwise.}
        \end{cases}
    \end{equation}
Note that when $S=\{2,  \infty\}$,  we have $N(\ff_E^S)=\prod_{v\notin S}|d_{E_v}|_v^{-1}=\prod_{v\in S}|d_E|_v=a_E$.

\begin{thm}\label{thm:mvfhol}
Set $S=\{2, \infty\}$ and take a pair of quadratic \'etale algebras $\cE_S=(\cE_v)_{v\in S}$ so that $\cE_\infty=\R\times\R$.
For a positive real number $t>0$,
    \begin{align}\label{eq:mvfhol1}
    &\hspace{-15pt} \lim_{x\to\inf}\, x^{-t} 
    \sum_{\substack{ E\in X(\M_2(\Q),\cE_S),\\ a_E<x}}
      a_E^{t-1} \,  L(\tfrac12, \sigma\otimes\eta_E)                                \nonumber \\ 
    & = \frac{2}{\pi \, t} \, \frac{(8\pi)^{\frac k2}\Gamma(\frac k2)}{\Gamma(k)} \,
    \langle f, f\rangle\, c_{\cE_2}(\phi_2)
    \prod_{p\neq 2}  \left\{ 1-p^{-3}-\frac{p-1}{p+1}p^{-k-1}a_p^2  \right\},
    \end{align}
where  $\langle f, f\rangle$ is the usual Petersson inner product and
    \[
    c_{\cE_2}(\phi_2)\coloneqq
    \frac{L(\frac12, \sigma_2\otimes\eta_{\cE_2})}{L(1, \sigma_2, \Ad)}
        =\begin{cases}
        \frac14 (3+2^{1-\frac k2}a_2)&
        \text{if $\cE_2\simeq\Q_2\times\Q_2$}, \\[5pt]
        \frac14(3-2^{1-\frac k2}a_2) & 
        \text{if $\cE_2\simeq\Q_2(\sqrt{5})$}, \\[5pt]
        \frac18(9-2^{2-k}a_2^2) &
        \text{otherwise}. 
        \end{cases}
    \]
The product in the right hand side is over odd primes.
\end{thm}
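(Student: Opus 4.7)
The proof is a direct specialization of Corollary \ref{cor:mvfL} to $F = \Q$, $D = \M_2(\Q)$, and $S = \{2,\infty\}$, combined with the classical Petersson--Rankin identity. First, I would verify that $N(\ff_\eta^S) = a_E$ for a real quadratic field $E = \Q(\sqrt{d_E})$ with $d_E$ squarefree. A case analysis on $d_E \bmod 4$ gives: if $d_E \equiv 1 \pmod 4$ then $\eta_E$ is unramified at $2$ and $N(\ff_\eta) = |d_E|$; if $d_E \equiv 3 \pmod 4$ then the discriminant is $4|d_E|$ with $N(\ff_{\eta_2}) = 4$; and if $d_E$ is even then the discriminant is $4|d_E|$ with $N(\ff_{\eta_2}) = 8$. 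In every case the prime-to-$2$ part of $N(\ff_\eta)$ equals $a_E$ as in \eqref{eq:discriminant}, so the index sets of \eqref{eq:mvfL} and \eqref{eq:mvfhol1} coincide.

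Next I would unwind the global and archimedean constants. One has $c_F = 1$, $|\Delta_F| = 1$, $\zeta_\Q(2) = \Gamma_\R(2)\zeta(2) = \pi/6$, and $L_\infty(1,\pi_\infty,\Ad) = \Gamma_\C(k)\Gamma_\R(2) = (2\pi)^{-k}(k-1)!\,\pi^{-1}$. The classical Petersson--Rankin identity
\[
\langle f,f\rangle = \frac{(k-1)!}{2^{2k-1}\pi^{k+1}}\,L_\fin(1,\pi,\Ad)
\]
for level-one newforms then yields $L(1,\pi,\Ad)/\zeta_\Q(2) = 6\cdot 2^{k-1}\pi^{-1}\langle f,f\rangle$. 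Since any real quadratic $E$ splits at infinity, $\eta_{\cE_\infty}$ is trivial and $L(\tfrac12,\pi_\infty) = \Gamma_\C(k/2) = 2(2\pi)^{-k/2}\Gamma(k/2)$, reducing the archimedean $S$-factor to $(2\pi)^{k/2}\Gamma(k/2)/(k-1)!$. Combining with the global prefactor $2/t$ from \eqref{eq:mvfL} and using $2^k(2\pi)^{k/2} = (8\pi)^{k/2}$ together with $(k-1)! = \Gamma(k)$ produces the coefficient $\frac{2}{\pi t}\cdot\frac{(8\pi)^{k/2}\Gamma(k/2)}{\Gamma(k)}\langle f,f\rangle$ in \eqref{eq:mvfhol1}, up to the $v=2$ factor.

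For the local factor at $v=2$, the identities $\zeta_{F_2}(2) = 4/3$ and $2c_2 = 4$ reduce the local contribution to $c_{\cE_2}(\phi_2)/3$; the factor $1/3$ cancels the $6$ coming from $\zeta_\Q(2)^{-1}$ to give the coefficient $2$ in the prefactor above. Using $\lambda_2 = 2^{1-k/2}a_2$, hence $(\alpha_2+\alpha_2^{-1})^2 = 2^{1-k}a_2^2$, one computes
\[
L(1,\pi_2,\Ad) = \frac{1}{(1-\alpha_2^2/2)(1-1/2)(1-\alpha_2^{-2}/2)} = \frac{8}{9 - 2^{2-k}a_2^2},
\]
together with the factorization $9 - 2^{2-k}a_2^2 = (3 - 2^{1-k/2}a_2)(3 + 2^{1-k/2}a_2)$. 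Three cases arise according to $\cE_2$: in the split case $\Q_2\times\Q_2$, $\eta_{\cE_2}$ is trivial and $L(\tfrac12,\pi_2) = 2/(3-2^{1-k/2}a_2)$, giving $c_{\cE_2}(\phi_2) = \tfrac14(3+2^{1-k/2}a_2)$; in the unramified quadratic case $\Q_2(\sqrt{5})$, the twist replaces $\alpha_2$ by $-\alpha_2$ and yields $\tfrac14(3-2^{1-k/2}a_2)$; and in the ramified cases $L(\tfrac12,\pi_2\otimes\eta_{\cE_2})=1$, giving $c_{\cE_2}(\phi_2) = \tfrac18(9-2^{2-k}a_2^2)$.

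The main obstacle is purely bookkeeping: reconciling the Tamagawa and classical measure normalizations, aligning the completed- versus finite-$L$-function conventions in the Petersson--Rankin formula with those of Corollary \ref{cor:mvfL}, and correctly converting between $a_p$, $\lambda_p$, and the Satake parameter $\alpha_p$. No conceptually new ingredient beyond Corollary \ref{cor:mvfL} and the classical identification of $L_\fin(1,\pi,\Ad)$ with $\langle f,f\rangle$ is required.
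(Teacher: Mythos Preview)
Your approach is correct and coincides with the paper's: identify $N(\ff_\eta^S)=a_E$, substitute $L(1,\pi,\Ad)=2^k\langle f,f\rangle$, and apply Corollary~\ref{cor:mvfL}. The paper's proof is a terse two-line reference to exactly these ingredients, while you have (correctly) unpacked the archimedean and $2$-adic bookkeeping; a couple of your intermediate constants are off by a harmless factor of $2$ that cancels (e.g.\ $L(1,\pi,\Ad)/\zeta_\Q(2)=6\cdot 2^{k}\pi^{-1}\langle f,f\rangle$, and the archimedean $S$-factor carries an extra $\tfrac12$), but the strategy and final expression are right.
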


\begin{proof}
Set $\Gamma_\R(s)=\pi^{-\frac{s}{2}}\Gamma(\frac{s}{2})$ and $\Gamma_\C(s)=2(2\pi)^{-s}\Gamma(s)$,  where $\Gamma(s)$ is the usual gamma function.
Then we have $L(1, \pi, \Ad)=2^k\langle f, f\rangle$,  $\zeta_\Q(2)=\frac{\pi^2}{6}\cdot\Gamma_\R(2)=\frac{\pi}{6}$ and $c_\Q=|\Delta_\Q|=1$.
Also we see that
    \[
    \kappa_{\cE_2}(\sigma_2)
    =\frac{\frac43\cdot L(\frac12,  \sigma_2\otimes\eta_{\cE_2})}
    {2\cdot 2\cdot L(1,  \sigma_2,  \Ad)}=\frac13\cdot c_{\cE_2}(\phi_2).
    \]
Since $L(s,  \sigma_\infty\otimes\eta_{\cE_\infty})=\Gamma_\C(s+\frac{k-1}{2})$ and $L(s,  \sigma_\infty,  \Ad)=\Gamma_\C(s+k-1)\Gamma_\R(s+1)$,  one obtains
    \[
    \kappa_{\cE_\infty}(\sigma_\infty)
    =\frac{\Gamma_\R(2)\Gamma_\C(\frac{k}{2})}
    {2\Gamma_\C(k)\Gamma_\R(2)}
    =\frac{(2\pi)^{\frac{k}{2}}\Gamma(\frac{k}{2})}{2\Gamma(k)}.
    \]
The last equality for the constant $c_{\cE_2}(\phi_2)$ follows from
\begin{equation}\label{eq:rev1}
L(1,\pi_2,\mathrm{Ad})=\frac{8}{9-\lambda_2^2}, \quad L(\tfrac12,\pi_2\otimes\eta_{\cE_2})=\begin{cases} 2/(3-\lambda_2) & \text{if $\cE_2\simeq\Q_2\times\Q_2$}, \\ 2/(3+\lambda_2)  & \text{if $\cE_2\simeq\Q_2(\sqrt{5})$}, \\ 1 & \text{otherwise.} \end{cases}
\end{equation}
and $\lambda_2=2^{1-\frac{k}{2}}a_2$.
Substituting these into \eqref{eq:mvfL},  we obtain \eqref{eq:mvfhol1}.
\end{proof}

\begin{cor}\label{cor:mvfhol}
The limit
    \[
    \lim_{x\to\inf} x^{-1}
    \sum_E  L(\tfrac{1}{2},\pi\otimes\eta_E) 
    \]
exists, where the sum is over real quadratic fields $E$ with $|\Delta_E|<x$.
The value of the above limit equals  
    \[
    \frac{6(8\pi)^\frac k2\Gamma(\frac k2)}{\pi\, \Gamma(k)} \langle f, f \rangle
    \prod_{p}\left\{1-p^{-3}-\frac{p-1}{p+1}p^{-k-1}a_p^2 \right\},
    \]
where the product is over all primes.
\end{cor}

\begin{proof}
We see in Remark \ref{rem:dependence} that Corollary \ref{cor:mvfL} holds for any finite set $S$.
As in the proof of Theorem \ref{thm:mvfhol},  we specialize the mean value formula \eqref{eq:mvfL} to the current situation with $S=\{\infty\}$ and $\cE_\infty=\R\times\R$ to obtain the corollary.
\end{proof}


\subsection{Mean value theorem for algebraic modular forms}\label{sec:mvfalg}

In this subsection, we specialize Theorem \ref{thm:main} to the case of algebraic modular forms on the multiplicative group of a quaternion algebra over $F=\Q$.
Let $D$ be a quaternion division algebra over $\Q$ with odd prime discriminant $q$.
Take a maximal order $\cO$ in $D$ and set $\cO_v=\cO\otimes\Z_v$ for each finite place $v$.
The set $X(D)$ consists of imaginary quadratic fields which do not split at $v=q$.
By \cite{SWY}*{Corollary 4.22},  we may take $X(D)$ so that $\fo_E\subset\cO$ for all but finitely many $E\in X(D)$,  where $\fo_E$ is the ring of integers of $E$.
Note that we can ignore the contribution of finite number of quadratic fields to the mean value formula.
With a slight abuse of notation,  we identify $X(D)$ with the set of $E\in X(D)$ such that $\fo_E\subset\cO$.

We summarize some notation about algebraic modular forms.
We refer the reader to \cite{Voight} and \cite{SWY}*{\S\,2} for more details.
For a finite place $v$, let $U_v$ be the image of $\cO_v^\times$ under the natural projection $D_v^\times\rightarrow PD_v^\times$ and $U=\prod_{v<\infty} U_v$.
These are maximal compact subgroups of $PD_v^\times$ and $PD_{\A_\fin}^\times$, respectively.
Let $\mathcal{A}(\cO)$ be the space of complex functions on the set of ideal classes $\mathrm{Cl}(\cO)=PD^\times\bs PD^\times_\A/PD^\times_\inf U$ for $\cO$.
Take $1=x_1$, $x_2,\dots, x_h \in PD^\times_{\A_\fin}$ so that we have a disjoint decomposition
    \[
    PD_\A^\times=\ds\amalg_{i=1}^h PD^\times x_i PD_\infty^\times U.
    \]
Elements of $\mathcal{A}(\cO)$ are functions on the finite set $\{x_1, \ldots, x_h\}$.
We define a $PD_\A^\times$-invariant inner product on $\mathcal{A}(\cO)$ by
    \[
    (\phi,\phi')=\sum_{j=1}^h \frac{\phi(x_j)\, \overline{\phi'(x_j)}}{w_j}, \hspace{15pt}
    \phi,\phi'\in \mathcal{A}(\cO),
    \]
where $w_j$ is the order of the finite group $PD^\times\cap x_jUx_j^{-1}$.

Hecke operators on $\mathcal{A}(\cO)$ are defined similarly as the holomorphic modular forms.
A Hecke eigenform $\phi\in\mathcal{A}(\cO)$ gives rise to an automorphic form on $PD_\A^\times$, which we again denote by $\phi$.
By Eichler mass formula \cite{Voight}*{Theorem 25.3.15}, the Petersson inner product \eqref{eq:innprod} becomes
    \[
    \langle \phi, \phi' \rangle=\frac{12}{q-1}(\phi, \phi').
    \]
Let $\pi=\otimes_v \pi_v$ be the corresponding automorphic cuspidal representation of $PD_\A^\times$.
Note that $\pi_\inf$ is the trivial representation,  and $\pi_q$ is the trivial or the unramified quadractic character since $\pi_q|_{U_q}$ is trivial and $PD^\times_q/U_q\cong \Z/2\Z$. In addition,  $\pi_p$ is unramified for any places $p\neq q, \infty$ since they are trivial on $U_p$.
Hereafter, we assume that $L(\tfrac{1}{2},\pi)\neq 0$. 
Then we see from the local root numbers that $\pi_q$ is the trivial representation.
Set $S=\{2, q, \infty\}$.
The cusp form $\phi$ is factorizable, i.e. $\phi=\otimes_v\phi_v$ with $\phi_v\in\pi_v$.
We assume that $\phi_v$ is the normalized spherical vector for all $v\not\in S$ and $v=2$.

Let $\mathrm{Cl}(E)$ denote the ideal class group of $E$.
Since we assumed that $\fo_E\subset\cO$,  we have a map 
    \begin{equation}\label{eq:classmap}
    \mathrm{Cl}(E)=
    E^\times\bs\A_E^\times/
    ( E_\infty^\times\cdot\prod_{v<\infty}\fo_{E, v}^\times )
    \rightarrow 
    PD^\times\bs PD_\A^\times/PD_\infty^\times U=\mathrm{Cl}(\cO).
    \end{equation}
By abuse of notation,  we write the function on $\mathrm{Cl}(E)$ which is the composition of $\phi$ with the above map again by $\phi$. 

Recall that the positive integer $a_E$ is attached to each quadratic field $E$ in \eqref{eq:a_E}.

\begin{thm}\label{thm:mvfalg}
Set $S=\{2, q, \infty\}$.
We take a triple $\cE_S=(\cE_v)_{v\in S}$ so that $\cE_\infty=\C$ and $\cE_q$ is the unramified quadratic extension of $\Q_q$.
Then
    \begin{align}\label{eq:mvfalg}
    &\hspace{-30pt}\lim_{x\to\inf} \, x^{-\frac32}        
    \sum_{\substack{E\in X(D, \cE_S) , \\ a_E<x}} 
    \left|\sum_{t\in\mathrm{Cl}(E)}\phi(t) \right|^2  \\
    &=\frac{1}{6\pi q}\, \frac{q-1}{q+1} \,  L_\fin(\tfrac{1}{2},\pi)  (\phi, \phi)  \,
    \Omega(\cE_2) \,
    \prod_{p\neq 2,q}  \left\{ 1-p^{-3}-\frac{p-1}{p+1}p^{-3}\lambda_p^2  \right\}.
     \nonumber
    \end{align}
Here,  $\phi$ in the left hand side is seen as a function on $\mathrm{Cl}(E)$ by the map \eqref{eq:classmap}. 
The product in the right hand side is over odd primes other than $q$.
The factor $\Omega(\cE_2)$ is given as follows:
 \begin{align*}
    \Omega(\cE_2)=&4N(\ff_{\cE_2})^{\frac12}\cdot
    \frac{L(\frac12,  \pi_2\otimes\eta_{\cE_2})}{L(1,  \pi_2,  \Ad)}  \\
        =&\begin{cases}
        3+\lambda_2 & \text{if $\cE_2\simeq\Q_2\times \Q_2$}, \\ 
        3-\lambda_2 & \text{if $\cE_2\simeq\Q_2(\sqrt{5})$}, \\  
        9-\lambda_2^2 & \text{if $\cE_2\simeq\Q_2(\sqrt{3})$ or  $\cE_2\simeq\Q_2(\sqrt{7})$}, \\   \sqrt{2}(9-\lambda_2^2) & \text{if $\cE_2\simeq\Q_2(\sqrt{\pm 2})$   or $\cE_2\simeq\Q_2(\sqrt{\pm 10})$}.
        \end{cases}
  \end{align*}
\end{thm}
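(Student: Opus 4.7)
The plan is to apply Theorem \ref{thm:main} with $\cE_S=(\cE_2,\cE_q,\cE_\infty)$ as fixed in the hypothesis and the parameter $t=3/2$, then to translate both sides into the algebraic-modular-form language. The first task is to evaluate $\cP_E(\phi)$ in closed form: since $\pi_\infty$ is trivial and $\phi$ is spherical outside $S$, $\phi$ descends to a function on $\mathrm{Cl}(\fO)$ via \eqref{eq:classmap}, and the double-coset decomposition $\A_E^\times=\bigsqcup_{t\in\mathrm{Cl}(E)}E^\times\,t\,(E_\infty^\times\prod_v\fo_{E,v}^\times)$ (valid by the condition (OE-1)) together with the $E_\infty^\times\prod_v\fo_{E,v}^\times$-invariance of $\phi$ gives $\cP_E(\phi)=V_E\sum_{t\in\mathrm{Cl}(E)}\phi(t)$, where $V_E$ is the Tamagawa volume of the resulting fundamental domain in $\A^\times E^\times\bsl\A_E^\times$. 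I would compute $V_E$ by assembling the local measures $\d h_{E_v}$ of Section \ref{sec:measure1}: the global normalization contributes a factor of $L(1,\eta_E)^{-1}$, and the archimedean measure yields a factor of $|d_E|^{-1/2}$ through the Jacobian $|a^2-d_E b^2|_\infty^{-1}$ when $\C^\times/\R^\times$ is parameterized.

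Once $V_E$ is explicit, the Dirichlet class-number formula $h_E=w_E|d_E|^{1/2}L(1,\eta_E)/(2\pi)$ collapses the $E$-dependent product $N(\ff_\eta^S)^{1/2}\,L(1,\eta_E)^2\,V_E^2$ into a constant up to the factor $w_E^{-2}$ (only $\Q(i)$ and $\Q(\sqrt{-3})$ have $w_E>2$, and these finitely many exceptions do not affect the limit). This is precisely why $t=3/2$ is the correct choice of parameter. Substituting \eqref{eq:alphaEv} for $\alpha_{E,S}(\phi)$ then separates the $E$-independent constants (which merge with the global constants produced by Theorem \ref{thm:main}) from the local ratios $\alpha_{E_v}^{\cE_v}(\phi_v)$ that remain to be computed.

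The local computations at $v\in S$ are organized as follows. At $v=\infty$, $\cE_\infty\simeq\C$ is the unique quadratic étale subalgebra of $D_\infty$ and $\pi_\infty$ is trivial, so $\alpha_{E_\infty}^{\cE_\infty}(\phi_\infty)=1$; the local factor $L(1,\eta_{\cE_\infty})/(2c_\infty L(\tfrac12,\pi_\infty))$ together with the $\Gamma$-factors in $L(\tfrac12,\pi)$ and $L(1,\pi,\Ad)$ produces the passage to $L_\fin(\tfrac12,\pi)$, while the Eichler--Deuring identity $\langle\phi,\phi\rangle=12(q-1)^{-1}(\phi,\phi)$ converts the Petersson pairing to $(\phi,\phi)$. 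At $v=q$, $\pi_q$ is trivial (forced by the $+1$ local root number condition implied by $L(\tfrac12,\pi)\neq 0$); via Jacquet--Langlands it corresponds to the Steinberg representation, whose local $L$-factor is $L(\tfrac12,\pi_q)=(1-q^{-1})^{-1}$, and together with $L(1,\eta_{\cE_q})=(1+q^{-1})^{-1}$ this yields the factor $(q-1)/(q+1)$ in the final formula; moreover $\alpha_{E_q}^{\cE_q}(\phi_q)=1$ by the integral model chosen in Lemma \ref{lem:locisom} together with (OE-1). At $v=2$, Lemmas \ref{lem:20201129} and \ref{lem:20201202} compute $\alpha_{E_2}^{\cE_2}(\phi_2)$ in terms of $d_E\bmod 8$ and $\cE_2$; pairing each of the seven isomorphism classes of $\cE_2$ with the appropriate local $L$-factor $L(1,\eta_{\cE_2})/(2c_2L(\tfrac12,\pi_2))$, together with the factor $|d_{\cE_2}|_2^{1/2}$ (which contributes the $\sqrt{2}$ in the last two cases when $\cE_2$ is ramified), reassembles into the displayed formula for $\Omega(\cE_2)$.

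The main obstacle I anticipate is the first step: correctly computing $V_E$ under the chosen Tamagawa normalization, and tracking every factor of $\pi$, $2$, and $|d_E|^{1/2}$ through the class-number formula so as to recover the overall constant $\frac{1}{6\pi q}$ in \eqref{eq:mvfalg}. A secondary concern is the case analysis at $v=2$: because the seven isomorphism classes of $\cE_2$ collapse into only four displayed cases for $\Omega(\cE_2)$, one must verify that within each case the two sources of dependence on $\cE_2$, namely $\alpha_{E_2}^{\cE_2}(\phi_2)$ (depending on $d_E\bmod 8$) and the local $L$-factor (depending only on $\cE_2$), combine consistently; in particular the $\lambda_2$ versus $\lambda_2^2$ dichotomy should emerge from whether $\cE_2$ is split/inert or ramified. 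The remaining steps are an elaborate but mechanical assembly of constants.
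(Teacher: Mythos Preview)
Your overall approach matches the paper's: apply Theorem~\ref{thm:main} with $t=3/2$, convert $\cP_E(\phi)$ into $\sum_{t\in\mathrm{Cl}(E)}\phi(t)$ via the class-number formula, and evaluate the local factors at $v\in S$. Two points need correction.

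First, a minor one: your justification that $\alpha_{E_q}^{\cE_q}(\phi_q)=1$ ``by the integral model chosen in Lemma~\ref{lem:locisom} together with (OE-1)'' cannot work, since Lemma~\ref{lem:locisom} is stated only for $v\notin S$ and $q\in S$. The paper's reason is simply that $\pi_q$ is the trivial representation, so $\beta_{E_q}(\phi_q:u)=1$ for every $u$ and hence $\alpha_{E_q}^{\cE_q}(\phi_q)=1$ by \eqref{eq:alpha1129}.

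Second, and more substantively: at $v=2$ you correctly use Lemmas~\ref{lem:20201129}--\ref{lem:20201202} to compute the $E$-dependent ratio $\alpha_{E_2}^{\cE_2}(\phi_2)$, but after substituting \eqref{eq:alphaEv} you still owe the $E$-independent constant $\alpha_{\cE_2}(\phi_2,\phi_2)$ itself. Saying it ``merges with the global constants'' is not enough---it must be computed explicitly to arrive at $\Omega(\cE_2)$. The difficulty is that for the chosen embedding $\delta_{\cE_2}=f_2^{-1}\bigl(\begin{smallmatrix}0&d_{\cE_2}\\1&0\end{smallmatrix}\bigr)$, the maximal compact of $\cE_2^\times$ need not lie in $K_2$ (when $d_{\cE_2}\equiv 1\bmod 4$ the ring $\Z_2+\Z_2\delta_{\cE_2}$ is only an order of index~$2$), so Waldspurger's unramified formula does not apply directly. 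The paper handles this by introducing an auxiliary embedding $\cE'_2$ with $\delta_{\cE'_2}=f_2^{-1}\bigl(\begin{smallmatrix}-1&(d_{\cE_2}-1)/2\\2&1\end{smallmatrix}\bigr)$, whose maximal compact \emph{does} sit in $K_2$; this makes $\alpha_{\cE'_2}(\phi_2,\phi_2)$ computable by Waldspurger's formula (or a direct calculation in the ramified cases), and then the relation $\alpha_{\cE_2}(\phi_2,\phi_2)=\alpha_{\cE'_2}^{\cE_2}(\phi_2)\cdot\alpha_{\cE'_2}(\phi_2,\phi_2)$ together with the identification $\alpha_{\cE'_2}^{\cE_2}(\phi_2)=\alpha_{E_2}^{\cE_2}(\phi_2)$ (same argument as Lemmas~\ref{lem:20201129}--\ref{lem:20201202}) produces the cancellation that turns the combined $v=2$ contribution into $\Omega(\cE_2)$. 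Without this step your assembly of constants at $v=2$ is incomplete.
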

\begin{proof}
First we consider the left hand side of \eqref{eq:mvf}.
For $E\in X(D, \cE_S)$,  note that $-a_E N(\ff_{\cE_2})$ equals the fundamental discriminant of $E$. 
Since $\cE_q$ is unramifed over $\Q_q$, we have $|d_E|_q=1$ and $a_E=N(\ff_E^S)$. 
We have $L(1, \eta_E)=(a_E N(\ff_{\cE_2}))^{-\frac12}|\mathrm{Cl}(E)|$ from the Dirichlet class number formula, since we may ignore $E=\Q(\sqrt{-1})$ and $\Q(\sqrt{-3})$.
Hence we get
    \[
    \frac{1}{2} \,(a_E N(\ff_{\cE_2}))^{\frac12}L(1,\eta_E)\, \cP_E(\phi)
    = \frac{|\mathrm{Cl}(E)|}{2} 
    \int_{E^\times \A_\Q^\times\bs\A_E^\times} \phi(h) \, \d^\times h
    =\sum_{t\in\mathrm{Cl}(E)}\phi(t). 
    \]
Recall that $\d^\times h$ is the Tamagawa measure and $\vol(E^\times \A_F^\times\bs  \A_E^\times, \d^\times h)=2$.
Since we prove $\alpha_{E,S}(\phi,\phi)\neq 0$ for any $E$ below, the left hand side of \eqref{eq:mvf} $(t=3/2)$ becomes
    \[
    4\, N(\ff_{\cE_2})^{-1}\cdot \lim_{x\to\infty}\, x^{-\frac32}
    \sum_{\substack{E\in X(D, \cE_S) \\ a_E<x}}
    a_E^{-\frac12} 
    \alpha_{E, S}(\phi)^{-1} \left|\sum_{t\in\mathrm{Cl}(E)}\phi(t) \right|^2.
    \]
    
Recall that $\pi_q$ and $\pi_\infty$ are the trivial representations and we assumed $\fo_E\subset\cO$.  
From Lemma \ref{lem:fP},  Lemma \ref{lem:omg} and Corollary \ref{cor:fPun}, we get
\[
\alpha_{E_2}(\phi_2,  \phi_2)=\frac{\zeta_{\Q_2}(2)L(\frac12, \pi_2)L(\frac12, \pi_2\otimes\eta_{\cE_2})}
    {L(1, \pi_2, \Ad)L(1, \eta_{\cE_2})} \times \begin{cases} 2 & \text{if $E_2\simeq \Q_2\times\Q_2$ or $\Q_2(\sqrt{5})$}, \\ 1 & \text{otherwise}. \end{cases}
\]
Here, we note that $d_{E_2} \in \Z_2^\times \sqcup 2\Z_2^\times$ and $\eta_{E_2}=\eta_{\cE_2}$. 
Furthermore,
\[
\alpha_{E_q}(\phi_q,  \phi_q)=\int_{\Q_q^\times\bs E_q^\times} \frac{\d h_{E_q}}{\d^\times z_q}\times \langle\phi_q,\phi_q\rangle=\langle\phi_q,\phi_q\rangle,
\]
\[
\frac{\alpha_{E_\inf}(\phi_\inf,  \phi_\inf)}{ \langle\phi_\inf,\phi_\inf\rangle }=\int_{\R^\times\bs \C^\times} \frac{\d h_{\C}}{\d^\times z_\inf} =\frac{1}{\pi}\int_{\R} \frac{\d a}{a^2+|d_E|} =|d_E|^{-\frac12} .
\]
Hence, $\alpha_{E,S}(\phi,\phi)\neq 0$ holds for any $E$. 
We also note that
\begin{equation}\label{eq:rev2}
\zeta_{\Q_2}(2)=\frac{4}{3} ,\quad N(\ff_{\cE_2})=\begin{cases}
        1 & \text{if $\cE_2\simeq\Q_2\times \Q_2$ or $\cE_2\simeq\Q_2(\sqrt{5})$}, \\  
        4 & \text{if $\cE_2\simeq\Q_2(\sqrt{3})$ or  $\cE_2\simeq\Q_2(\sqrt{7})$}, \\  
        8 & \text{otherwise}
        \end{cases}
\end{equation}
and \eqref{eq:rev1} remains valid in this case.
Therefore,  by the above equalities and $\alpha_{E,  S}(\phi)=\alpha_{E_\inf}(\phi_\inf,  \phi_\inf)\alpha_{E_2}(\phi_2,  \phi_2)\alpha_{E_q}(\phi_q,  \phi_q)$,  we get
    \begin{align*}
     4N(\ff_{\cE_2})^{-1}a_E^{-\frac12}\alpha_{E,  S}(\phi)^{-1}
     &=\frac{L(1, \eta_{\cE_2})}{L(\frac12, \pi_2)}\cdot
     \frac{2 \, L(1, \pi_2, \Ad) N(\ff_{\cE_2})^{-\frac12}}
     {\zeta_{\Q_2}(2)L(\frac12, \pi_2\otimes\eta_{\cE_2})\cdot
     \langle \phi, \phi \rangle} \\
     &=\frac{L(1, \eta_{\cE_2})}{L(\frac12, \pi_2)}\cdot 
         \frac{6}{\Omega(\cE_2)\cdot \langle \phi, \phi \rangle}.
    \end{align*}
Thus, the left hand side of \eqref{eq:mvf} $(t=3/2)$ equals
    \[
    \frac{q-1}{2\, (\phi, \phi)}\, \frac{L(1, \eta_{\cE_2})}
    {L(\frac12, \pi_2)\Omega(\cE_2)}\cdot
     \lim_{x\to\infty}\, x^{-\frac32}\sum_{\substack{E\in X(D, \cE_S) \\ a_E<x}}
      \left|\sum_{t\in\mathrm{Cl}(E)}\phi(t) \right|^2.
    \]
For the constant $\Omega(\cE_2)$, its value follows from \eqref{eq:rev1} and \eqref{eq:rev2}. 

On the other hand,  the right hand side of \eqref{eq:mvf} equals
    \[
    \frac{1}{12\pi}\, \frac{(q-1)^2}{q(q+1)} \, L_\fin(\tfrac12, \pi) 
    \frac{L(1, \eta_{\cE_2})}{L(\frac12, \pi_2)}
    \prod_{p\neq 2,q}  \left\{ 1-p^{-3}-\frac{p-1}{p+1}p^{-3}\lambda_p^2  \right\}. 
    \]
Putting altogether, we obtain \eqref{eq:mvfalg}.
\end{proof}


\section{Prehomogeneous zeta function}\label{sec:pvzeta}


\subsection{Definition and basic properties}\label{sec:definition}

We recall the definition and basic properties of the prehomogeneous zeta function with toric periods introduced in the previous paper \cite{SW}.


\subsubsection{Prehomogeneous vector space}
Let $F$ be a field of characteristic zero and $D$ a quaternion algebra over $F$.
Set $G=D^\times\times D^\times \times \GL_2$ and $V=D\oplus D$.
We consider the $F$-rational right action $\rho\,\colon G\rightarrow\GL(V)$ of $G$ on $V$ given by
\[
(x,y) \cdot \rho(g_1,g_2,g_3) = (g_1^{-1}xg_2,g_1^{-1}yg_2)g_3, \quad (g_1,g_2,g_3)\in G, \quad (x,y)\in V.
\]
The triple $(\rho, G, V)$ forms a prehomogeneous vector space.
The fundamental relative invariant is $P(x,y)=-\det(x\, y^\iota-y \, x^\iota)$ and the corresponding character of $G$ is $\omega(g)=\det(g_1)^{-2}\det(g_2)^2\det(g_3)^2$, i.e.
    \[
    P((x,y)\cdot\rho(g))=\omega(g)\, P(x,y), \hspace{25pt}  g\in G, \ (x,y)\in V.
    \]
It is easy to check that
    \[
    \mathrm{Ker}\rho=\{ (a,b,ab^{-1}I_2)\in G \mid a,b\in\GL_1  \}
    \cong \GL_1\times \GL_1,
    \]
where $I_n$ is the unit matrix in $M_n$.
Set $H=\mathrm{Ker}\rho\bs G$.
By abuse of notation, we will also let $\rho$ denote the induced action of $H$ on $V$.

We define a non-degenerate bilinear form $\langle \; , \; \rangle$ on $V$ by
    \[
    \langle (x_1,y_1) , (x_2,y_2) \rangle\coloneqq\Tr(x_1 x_2)+\Tr(y_1  y_2), \hspace{25pt} 
    (x_i, y_i)\in V, \ i=1, 2.
    \]
Let $(\rho^\vee, V)$ denote the contragredient representation of $\rho$ on $V$, namely $\langle x\cdot \rho(g) , y\cdot \rho^\vee(g) \rangle=\langle x , y \rangle$. 
One can see that $\rho^\vee$ is given by
    \[
     (x,y)\cdot \rho^\vee(g_1,g_2,g_3) 
     =(g_2^{-1} x g_1,g_2^{-1}yg_1)\, ^t{}\!g_3^{-1}, \quad
     (g_1, g_2, g_3)\in G, \quad (x, y)\in V
     \]
and $\mathrm{Ker}\rho=\mathrm{Ker}\rho^\vee$.

For each point $x\in V(F)$, we say that $x$ is regular if the $G(\bar{F})$-orbit of $x$ is Zariski open dense in $V(\bar{F})$, and $x$ is singular otherwise.
The set of regular points is $V^0(F)=\{ x\in V(F) \mid P(x)\neq0 \}$.
For $\cE\in X(D)$, $V_{\cE}(F)= \{ x\in V \mid  P(x)\in d_{\cE}(F^\times)^2 \}$ is a $G(F)$-orbit and $\cE\mapsto V_{\cE}(F)$ gives rise to a bijective correspondence between $X(D)$ and the set of $G(F)$-orbits in $V^0(F)$.
We take a base point $x_{\cE}=(1, \delta_\cE)\in V_{\cE}(F)$.


\subsubsection{Zeta functions}\label{sec:20200823}

For the rest of this section, $F$ is a number field.
We take an $F$-rational gauge form $\omega$ on $H$. 
Then the Tamagawa measure $\d h$ on $H(\A)$ is written as
    \[
    \d h= c_F^{-1}\, |\Delta_F|^{-5}\prod_{v\in\Sigma} c_v \, \omega_v
    \]
where $\omega_v$ is the measure on $H(F_v)$ obtained from $\omega$.
The constants $c_v$ and $c_F$ are defined in \S\,\ref{sec:n2}.

Let $\cS(V(\A))$ denote the Schwartz space on $V(\A)$. 
Let $\pi$ be a cuspidal automorphic representation of $PD^\times_\A$ which is not $1$-dimensional.
For $s\in\C$, $\phi=\otimes_v\phi_v\in\pi$ and $\Phi\in\cS(V(\A))$, we define the global zeta functions $Z(\Phi,\phi,s)$ and $Z^\vee(\Phi,\phi,s)$:
    \begin{align*}
    Z(\Phi,\phi,s)&=\int_{H(F)\bs H(\A)} |\omega(h)|^s \phi(g_1)\, \overline{\phi(g_2)}
    \sum_{x\in V^0(F)} \Phi(x\cdot \rho(h))\, \d h,\\
    Z^\vee(\Phi,\phi,s)&=\int_{H(F)\bs H(\A)} |\omega(h)|^{-s} \phi(g_1)\, 
    \overline{\phi(g_2)} \sum_{x\in V^0(F)} \Phi(x\cdot \rho^\vee(h))\, \d h.
    \end{align*}

We set
    \begin{align*}
    Z_+(\Phi,\phi,s)&=\int_{H(F)\bs H(\A) , |\omega(h)|\geq 1} |\omega(h)|^s \phi(g_1)\, 
    \overline{\phi(g_2)} \sum_{x\in V^0(F)} \Phi(x\cdot \rho(h))\, \d h , \\
    Z_+^\vee(\Phi,\phi,s)&=\int_{H(F)\bs H(\A),|\omega^\vee(h)|\geq 1} 
    |\omega(h)|^{-s} \phi(g_1)\, \overline{\phi(g_2)} \sum_{x\in V^0(F)} 
    \Phi(x\cdot \rho^\vee(h))\, \d h .
    \end{align*}
These integrals define entire functions if they absolutely converge for sufficiently large $\Re(s)$ (cf. \cite{Kimura}*{Proposition 5.15}).

Let
    \[
    f_{\phi}(g')=\int_{Z(\A)D^\times \bs D_\A^\times}\phi(g)\, \overline{\phi(gg')} \, 
    \frac{\d g}{\d z}, \hspace{25pt} g'\in D_\A^\times
    \]
be the matrix coefficient.
For $\Psi\in\cS(D_\A)$, the Godement-Jacquet zeta integral is the meromorphic continuation of
    \[
    Z^\GJ(\Psi,f_{\phi},s)= \int_{D_\A^\times} \Psi(g) \, f_{\phi}(g) \, |\det(g)|^s \, \d g.
    \]
Given a Schwartz function $\Phi\in\cS(V(\A))$ , we define $\Phi_K\in\cS(V(\A))$ and $\Phi_1\in\cS(D_\A)$ by
    \[
    \Phi_K(x)=\int_K\Phi(x\cdot\rho(1, 1, k)) \d k, \qquad 
    \Phi_1(x)\coloneqq\Phi(x, 0).
    \]
Note that we have $\sF(\Phi_K)=(\sF\Phi)_K$, where 
    \[
    \sF\Phi(z')=\int_{V(\A)}\Phi(z)\psi_F(\langle z, z' \rangle) \d z
    \]
is the Fourier transform of $\Phi$.
    
The basic analytic properties of $Z(\Phi,\phi,s)$ are summarized as follows.

\begin{thm}\label{thm:global}
\begin{itemize}
\item[(1)] The zeta function $Z(\Phi,\phi,s)$ is absolutely convergent and holomorphic on the domain $\{s\in\C \mid \Re(s)>T\}$ for sufficiently large $T>0$.

\item[(2)] For $\Re(s)>T$ and $\Phi \in \cS(V(\A))$, we have
    \begin{align*}
    Z(\Phi,\phi,s)=& Z_+(\Phi, \phi, s)+Z_+^\vee(\sF\Phi, \phi, 2-s)  \\
    &+c_F \, \fc_F \frac{Z^\GJ(((\sF\Phi)_K)_1, \overline{f_{\phi}}, 1)}{2s-3}
    -c_F\, \fc_F \frac{Z^\GJ(\left(\Phi_K\right)_1, f_{\phi}, 1)}{2s-1},
    \end{align*}
    where we set
    \[
    \fc_F \coloneqq \frac{2\pi^{r_2}c_F}{|\Delta_F|^{\frac12} \, \zeta_F(2)}.
    \]
    
\item[(3)] The zeta function $Z(\Phi,\phi,s)$ is meromorphically continued to the whole $s$-plane.
The possible poles are at most simple, located at $s=\frac12$ and $s=\frac32$.
In addition, the following functional equation holds:
    \begin{equation}\label{eq:gf2}
    Z(\Phi,\phi,s)=Z^\vee(\sF\Phi, \phi, 2-s).
    \end{equation}
\end{itemize}
\end{thm}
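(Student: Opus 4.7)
The proof follows the classical Poisson-summation approach to prehomogeneous zeta functions of M.~Sato, developed for this twisted setting by Wen-Wei Li. The three parts will be obtained in a single chain: splitting the integral at $|\omega(h)|=1$ and applying Poisson summation to the low-height piece yield an explicit decomposition whose analytic properties immediately give (2), from which (1) and (3) follow. For the convergence in (1), cuspidality of $\phi$ implies rapid decay of $\phi(g_1)\overline{\phi(g_2)}$ on $H(F)\bs H(\A)$; the theta series $\sum_{x\in V^0(F)}\Phi(x\cdot\rho(h))$ is dominated by the corresponding theta series on $V(\A)$, which is controlled by reduction theory on $H$; combined with the weight $|\omega(h)|^{s}$ for $\Re(s)>T$ large, this gives absolute convergence, and the truncated versions $Z_+,Z_+^\vee$ are entire because the restriction $|\omega|\ge 1$ kills the only source of growth.

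For (2), split $H(F)\bs H(\A)=\{|\omega|\ge 1\}\sqcup\{|\omega|<1\}$. The first piece is by definition $Z_+(\Phi,\phi,s)$. On the second piece, write
\[
\sum_{x\in V^{0}(F)}\Phi(x\cdot\rho(h))=\sum_{x\in V(F)}\Phi(x\cdot\rho(h))-\sum_{x\in V^{\mathrm{sing}}(F)}\Phi(x\cdot\rho(h))
\]
and apply Poisson summation. A direct computation shows that the $F$-determinant of $\rho(h)$ on $V\cong F^{8}$ equals $|\omega(h)|^{2}$: left/right multiplication on $D$ by $g_1,g_2$ contribute $\det(g_1)^{-2}\det(g_2)^{2}$ per copy, and $g_3\in\GL_2$ acts with determinant $\det(g_3)^{4}$ on $D\oplus D$. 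Therefore
\[
\sum_{x\in V(F)}\Phi(x\cdot\rho(h))=|\omega(h)|^{-2}\sum_{y\in V(F)}\sF\Phi(y\cdot\rho^{\vee}(h)),
\]
and splitting the dual sum again into regular and singular parts, the regular piece recombines, after the involution on $H$ intertwining $\rho$ and $\rho^{\vee}$, into $Z_{+}^{\vee}(\sF\Phi,\phi,2-s)$.

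The main obstacle is identifying the singular contributions. The open $H(F)$-orbit inside $V^{\mathrm{sing}}(F)$ consists of pairs $(x,y)$ with $x,y$ proportional over $F$; a representative is $(z,0)$ with $z\in D^{\times}$, whose stabilizer in $\GL_{2}$ is the mirabolic $\bigl\{\bigl(\begin{smallmatrix}1&0\\c&d\end{smallmatrix}\bigr)\bigr\}$. Unfolding along this orbit, the $\GL_2$ quotient integration combined with $K$-averaging converts the summation into $\Phi_1=(\Phi_K)_1$ paired against the matrix coefficient $f_\phi$, and the residual one-parameter scaling in $\GL_2$ gives $\int_0^1 t^{2s-1}\d t/t=(2s-1)^{-1}$ by analytic continuation. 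This yields the Godement--Jacquet term $-c_F\fc_F(2s-1)^{-1}Z^{\GJ}((\Phi_K)_1,f_\phi,1)$, with $c_F$ and $\fc_F$ arising from the normalization of $\d h$ recorded in \S\,\ref{sec:20200823}. The symmetric computation on the Poisson-transformed side produces $+c_F\fc_F(2s-3)^{-1}Z^{\GJ}(((\sF\Phi)_K)_1,\overline{f_\phi},1)$; the exponent shift reflects that the dual integrand carries $|\omega|^{s-2}$. Deeper singular strata contribute zero after integration against $\phi(g_1)\overline{\phi(g_2)}$ by cuspidality: the unipotent radicals in their stabilizers produce vanishing constant terms on cusp forms.

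Once (2) is established, (3) is immediate: since $Z_+$ and $Z_+^{\vee}$ are entire, the identity extends $Z(\Phi,\phi,s)$ meromorphically to $\C$ with at most simple poles at $s=\tfrac12,\tfrac32$, and the functional equation follows by applying (2) to $\sF\Phi$ at $2-s$, using $\sF\sF\Phi(x)=\Phi(-x)$ and $-I\in\mathrm{Ker}\,\rho$ to identify the two Godement--Jacquet terms under the swap.
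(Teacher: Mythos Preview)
This theorem is not proved in the present paper: it is quoted verbatim from the authors' earlier work \cite{SW} (their Lemma~3.1, Theorem~3.2 and Corollary~3.3), so there is no in-paper argument to compare against. Your sketch follows the expected Sato--Shintani scheme that \cite{SW} employs: split at $|\omega(h)|=1$, apply Poisson summation on the small piece using the Jacobian $|\det\rho(h)|=|\omega(h)|^{2}$, regroup the regular dual sum into $Z_{+}^{\vee}(\sF\Phi,\phi,2-s)$, and identify the singular contributions. The key non-formal inputs you name---cuspidality of $\phi$ to kill the deeper singular strata, and unfolding along the open singular orbit (representative $(1,0)$, whose $\GL_{2}$-stabilizer is a mirabolic) to produce the Godement--Jacquet integrals with the simple poles at $s=\tfrac12,\tfrac32$---are exactly the mechanism used in \cite{SW}. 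So your proposal is correct and matches the cited proof; there is nothing further to add here beyond noting that the present paper takes the result as a black box.
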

\begin{proof}
    The assertions were proved in \cite{SW}*{Lemma 3.1, Theorem 3.2 and Corollary 3.3 in \S 3}. 
    Note that the measure $\fc_F\d h$ on $H(\A)$ is used in \cite{SW}*{\S 3} and its normalization is different from the other sections in \cite{SW}. 
    We also note that the assertion (3) above follows from (1) and (2). 
\end{proof}

We also introduce the local zeta function
    \[
    Z_{\cE_v}(\Phi_v, \phi_v, s)
    =\frac{2\,c_v}{L(1, \eta_{\cE_v})^2} \int_{V_{\cE_v}(F_v)}
    \alpha_{\cE_v}(\pi_v(g_1)\phi_v, \pi_v(g_2)\phi_v)\, |P(x)|_v^{s-2} \, \Phi_v(x) \d x,
    \]
where $\cE_v\in X(D_v)$ and $x=x_{\cE_v}\cdot \rho(g_1,g_2,g_3)$. 
Since we have 
\[
\alpha_{\cE_v}(\pi_v(t_1)\varphi_v, \pi_v(t_2)\varphi'_v)=\alpha_{\cE_v}(\varphi_v,\varphi'_v) 
\]
for any element $(t_1,t_2,t_3)$ of the stabilizer of $x_{\cE_v}$ in $G(F_v)$, the above integral is well-defined.  

\begin{lem}\label{lem:loc}
Let the notation be as above. 
\begin{itemize}
\item[(1)] The local zeta function $Z_{\cE_v}(\Phi_v, \phi_v, s)$ is absolutely convergent when $\Re(s)$ is sufficiently large and meromorphically continued to the whole $s$-plane.

\item[(2)] Assume that $\alpha_{\cE_v}(\phi_v,\phi_v)\neq0$.
For $s_0\in\C$ there exists a test function $\Phi_v\in \cS(V(F_v))$ supported on $V_{\cE_v}(F_v)$ such that $Z_{\cE_v}(\Phi_v, \phi_v, s)$ is entire and non-zero around $s=s_0$.
\end{itemize}
\end{lem}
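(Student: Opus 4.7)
My plan is to treat both parts via the orbit parametrization $H(F_v) \twoheadrightarrow V_{\cE_v}(F_v)$, $h \mapsto x_{\cE_v} \cdot \rho(h)$, whose stabilizer is identified with $F_v^\times \bsl \cE_v^\times$ (embedded diagonally in $H$ by $t \mapsto (t,t,1)$). Because $P(x \cdot \rho(h)) = \omega(h)\, P(x)$, the change of variable replaces $|P(x)|_v^{s-2}\, dx$ on the open orbit by a constant multiple of $|\omega(h)|_v^{s-c}\, dh$ as a quotient measure on $\mathrm{Stab}(x_{\cE_v}) \bsl H(F_v)$, for an explicit exponent $c$ depending on the weights of $\rho$. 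After this unfolding, $Z_{\cE_v}(\Phi_v, \phi_v, s)$ becomes a twisted orbital integral of the matrix coefficient $\alpha_{\cE_v}(\pi_v(g_1) \phi_v, \pi_v(g_2) \phi_v)$ against the translated Schwartz function $\Phi_v(x_{\cE_v} \cdot \rho(h))$ with weight $|\omega(h)|_v^{s-c}$.

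For part (1), absolute convergence on a right half plane then follows by combining uniform bounds on $\alpha_{\cE_v}$ for smooth vectors, the Schwartz decay of $\Phi_v$ at infinity in $V(F_v)$, and the damping provided by $|\omega(h)|_v^{s-c}$ along directions where $|P(x)|_v$ tends to $0$. For the meromorphic continuation I would appeal either to the general theory of Sato--Shintani--Bernstein local zeta distributions applied to $|P|_v^{s-2}$, or more directly to Wen-Wei Li's local analysis \cite{Li1, Li2, Li3, Li4} of this particular prehomogeneous vector space, which already supplies the meromorphic continuation of a family of local zeta integrals subsuming ours.

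For part (2), I would localize. Take a compact neighborhood $U$ of $x_{\cE_v}$ inside $V_{\cE_v}(F_v)$ and choose $\Phi_v \geq 0$ with support in $U$ and $\Phi_v(x_{\cE_v}) > 0$. On the compact set $U$, $|P(x)|_v$ is bounded away from $0$ and $\infty$, so $|P(x)|_v^{s-2} = \exp((s-2)\log|P(x)|_v)$ is entire in $s$ of exponential type. Using the factorization
\[
\alpha_{\cE_v}(\pi_v(g_1)\phi_v, \pi_v(g_2)\phi_v) = \alpha_{\cE_v}(\phi_v, \phi_v)\, \beta_{\cE_v}(\phi_v: g_1)\, \overline{\beta_{\cE_v}(\phi_v: g_2)}
\]
from \cite[Lemma 4.1]{SW} together with the continuity of $\beta_{\cE_v}$ and $\beta_{\cE_v}(\phi_v: 1) = 1$, after shrinking $U$ the integrand at $s = s_0$ differs from the non-zero constant $\alpha_{\cE_v}(\phi_v, \phi_v)\, |P(x_{\cE_v})|_v^{s_0-2}$ times the non-negative function $\Phi_v(x)$ by an arbitrarily small relative perturbation, so the integral is non-zero. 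The main obstacle lies in part (1)---obtaining a uniform bound on $\alpha_{\cE_v}(\pi_v(g_1)\phi_v, \pi_v(g_2)\phi_v)$ as $g_1, g_2$ range over $D_v^\times$, which at non-archimedean places uses the explicit asymptotics of matrix coefficients from the classification of $\pi_v$ and at archimedean places requires the standard polynomial-growth bounds on matrix coefficients of smooth vectors.
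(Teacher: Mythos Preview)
Your proposal is correct and follows essentially the same approach as the paper: for (1) the paper simply cites \cite{Li4}, which you also invoke after sketching the orbit-parametrization setup; for (2) the paper's one-line proof ``a test function supported on a sufficiently small neighborhood of $x_{\cE_v}$ satisfies the conditions'' is exactly the localization argument you spell out, and your use of the factorization of $\alpha_{\cE_v}$ via $\beta_{\cE_v}$ to establish nonvanishing at $s_0$ supplies the details the paper leaves implicit.
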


\begin{proof}
(1) is proved in \cite{Li4}.
A test function supported on a sufficiently small neighborhood of $x_{\cE_v}$ satisfies the conditions of (2).
\end{proof}

Take two quadratic \'etale subalgebras $\cE_v$ and $\cE_v'$ of $F_v$ in $D_v$. 
Suppose $\cE_v\simeq \cE_v'$. 
Then, there exist $t\in D_v^\times$ and $a\in F_v^\times$ such that $at^{-1}\delta_{\cE_v}t=\delta_{\cE_v'}$. 
This implies $x_{\cE_v'}=x_{\cE_v}\cdot \rho(t,t,\diag(1,a))$ and $a^2d_{\cE_v}=d_{\cE_v'}$. 
For $\phi_v,  \phi'_v\in\pi_v$, we have
\begin{equation}\label{eq:alc}
    \alpha_{\cE_v'}(\phi_v,  \phi'_v)=|a|_v^{-1}\int_{F_v^\times\bs t^{-1}\cE_v^\times t}\langle \pi_v(h_v)\phi_v,  \phi'_v \rangle_v \d h_v
    =\left|\frac{d_{\cE_v}}{d_{\cE_v'}}\right|_v^\frac12\alpha_{\cE_v}(\pi(t)\phi_v,  \pi(t)\phi'_v).
\end{equation}
In particular, we get $Z_{\cE_v'}(\Phi_v, \phi_v, s)=\left|\frac{d_{\cE_v}}{d_{\cE_v'}}\right|_v^{\frac12} Z_{\cE_v}(\Phi_v, \phi_v, s)$ for $v\in S$, since for any $t$ in $D^\times$ we have $Z_{\cE_v}(\Phi_v,\pi_v(t) \phi_v, s)=Z_{\cE_v}(\Phi_v, \phi_v, s)$ by change of variable as $x \mapsto x\cdot \rho(t^{-1},t^{-1},1)$.

From now till the end of the next section, we assume the following. 
\begin{quote}\textbf{Assumption.}
We fix a decomposable element $\phi=\otimes_v\phi_v\in\pi$,  finite set of places $S\subset\Sigma$ and $\cE_S=(\cE_v)_{v\in S}\in X(D_S)$ so that Condition \ref{condition} is satisfied and $\alpha_{\cE_v}(\phi_v,\phi_v)\neq0$ for all $v\in S$ as in Theorem \ref{thm:main}. 
\end{quote}
One can see that the left hand side of \eqref{eq:mvf} depends only on the isomorphism classes of $\cE_v$'s.
When proving Theorem \ref{thm:main}, we may fix the choice of an \'etale subalgebra $\cE'_v\subset D_v$ as follows.
\begin{itemize}
\item For any $v\notin S$ and any $\cE_v'\in X(D_v)$, we also suppose that $d_{\cE_v'}\in \fo_v \setminus \varpi_v^2\fo_v$ and the maximal compact subgroup of $\cE_v^{\prime\times}$ is contained in $K_v$. 
\item For each archimedean place $v$, we have $d_{\cE_v}=\delta_{\cE_v}^2=\pm 1$. 
\end{itemize} 

We can impose the second condition, since we have $\alpha_{\cE_v}(\pi_v(t_v)\phi_v,\pi_v(t_v)\phi_v)\neq 0$ for some $t_v\in D_v^\times$ by \eqref{eq:alc} and we may replace $\pi_v(t_v)\phi_v$ by $\phi_v$ without loss of generality (cf. Remark \ref{rem:mvf}). 
The following lemma is obtained from the first condition.  
\begin{lem}\label{lem:unchange}
Take a finite subset $T$ of $\Sigma$ containing $S$. 
For $\cE'=(\cE'_v)_{v\in T\setminus S}\in X(D_{T\setminus S})$, let $\cE_S\cup\cE'$ be the join of $\cE_S$ and $\cE'$, which is an element of $X(D_T)=\prod_{v\in T}X(D_v)$. 
Then, we have 
\[
\alpha^{\cE_S\cup\cE'}_E(\phi)|\cP_E(\phi)|^2=\alpha^{\cE_S}_E(\phi)|\cP_E(\phi)|^2
\]
for any $\cE'\in X(D_{T\setminus S})$.    
\end{lem}
\begin{proof}
For $v\in T\setminus S$ and $\cE_v'\in X(D_v)$, we get $\alpha^\#_{\cE_v'}(\phi_v,\phi_v)=1$ by the above condition (cf. Corollaries \ref{cor:fP} and \ref{cor:fPun}). 
Then, the assertion can be proved by the definition \eqref{eq:alphaEv} of $\alpha^{\cE_S}_E(\phi)|\cP_E(\phi)|^2$.
\end{proof}


We set 
\[
\xi(D, \cE_S, \phi, s)=c(D, \phi, S, s) \sD(D, \cE_S, \phi, s)
\]
with a meromorphic function
    \begin{equation}\label{eq:rel814}
    c(D, \phi, S, s)=\frac{ 1 }{2 \, |\Delta_F|^4  \, c_F }  \frac{\zeta^S_F(2s-1) \, 
    L^S(2s-1,\pi,\mathrm{Ad}) }{\zeta^S_F(2)^3\, \alpha_{\cE_S}(\phi, \phi)}
    \end{equation}
and a Dirichlet series
    \begin{equation}\label{eq:rel815}
    \sD(D, \cE_S, \phi, s)=\sum_{E\in X(D,\cE_S)} 
    \frac{L(1,\eta)^2 \, \alpha_E^{\cE_S}(\phi)\,  |\cP_E(\phi)|^2\, \mathcal{D}_E^S(\pi,s)}
    {N(\ff_E^S)^{s-1}}.
    \end{equation}
Here, $\mathcal{D}_E^S(\pi,s)=\prod_{v\notin S}\mathcal{D}_{E_v}(\pi_v,s)$ is the product of
    \begin{equation*}
    \mathcal{D}_{E_v}(\pi_v,s)= 
        \begin{cases} 
        1+q_v^{-2s+1}+q_v^{-2s}+q_v^{-4s+1} -2\eta_v(\varpi_v)\, q_v^{-2s}\lambda_v 
        & \text{if $\eta_v$ is unramified}, \\ 
        1+q_v^{-2s+1}
        & \text{if $\eta_v$ is ramified}.
        \end{cases}
    \end{equation*}
Note that $c(D,  \phi,  S,  s)$ is holomorphic and does not vanish in the region $\Re(s)>1$ (see \cite{JS}*{Theorem 5.3}). 
Choose a maximal order $\cO$ in $D$, and an integral structure of $V$ is given by $V(\fo)=\cO\oplus\cO$. 
The next theorem is a special case of the explicit formula \cite{SW}*{Theorem 1.2} for $Z(\Phi, \phi, s)$, which is sufficient for our purpose.
\begin{thm}\label{thm:Dirichlet}
Take a decomposable Schwartz function $\Phi=\otimes_v \Phi_v\in\cS(V(\A))$ as follows.
For $v\notin S$, $\Phi_v$ is the characteristic function of $V(\fo_v)$.
For $v\in S$, we choose $\Phi_v$ so that 
    \begin{equation}\label{eq:cond}
     Z_{\cL_v}(\Phi_v, \phi_v, s)\equiv 0 \quad 
     \text{for all $\cL_v\in X(D_v)\setminus\{\cE_v\}$}.
    \end{equation}
For $\Re(s)>0$ sufficiently large, the Dirichlet series $\xi(D,\cE_S, \phi, s)$ converges absolutely and satisfies
    \begin{equation}\label{eq:gloloc}
    Z(\Phi, \phi, s)= \left(\prod_{v\in S} Z_{\cE_v}(\Phi_v, \phi_v, s)\right)
     \,\xi(D, \cE_S, \phi, s).
    \end{equation}
\end{thm}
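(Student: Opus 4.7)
The plan is to derive the stated identity as a specialization of the general explicit formula \cite[Theorem 3.13]{SW} for $Z(\Phi, \phi, s)$. That formula decomposes the global zeta function via the $H(F)$-orbit decomposition $V^0(F) = \bigsqcup_{E \in X(D)} V_E(F)$ into a sum over $E \in X(D)$ of products of local zeta integrals $Z_{E_v}(\Phi_v, \phi_v, s)$, weighted by a factor involving $L(1, \eta)^2$, the toric period $|\cP_E(\phi)|^2$, a product of $\alpha_{E_v}(\phi_v, \phi_v)^{-1}$, and adelic normalization constants. To prove the present theorem it suffices to verify that our specific choice of $\Phi$ reshapes this decomposition into the form displayed in \eqref{eq:gloloc}.

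First, at places $v \in S$ the vanishing condition \eqref{eq:cond} forces every $E$ with $E_v \not\simeq \cE_v$ for some $v \in S$ to contribute zero, so the sum collapses to $E \in X(D, \cE_S)$. For such $E$, the identity
    \[
    Z_{E_v}(\Phi_v, \phi_v, s) = \left|\frac{d_{\cE_v}}{d_{E_v}}\right|_v^{1/2} Z_{\cE_v}(\Phi_v, \phi_v, s), \qquad v \in S,
    \]
recorded just before the theorem, lets one factor the $E$-dependent piece at the $S$-places as $\prod_{v \in S}|d_{\cE_v}/d_{E_v}|_v^{1/2}$ times the common quantity $\prod_{v \in S} Z_{\cE_v}(\Phi_v, \phi_v, s)$. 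The residual $\alpha_{E_v}(\phi_v, \phi_v)^{-1}$ factors coming from \cite[Theorem 3.13]{SW}, grouped with these absolute values, assemble into $\alpha_E^{\cE_S}(\phi)/\alpha_{\cE_S}(\phi, \phi)$ as defined in \eqref{eq:alphaEv}, and $\alpha_{\cE_S}(\phi, \phi)$ is absorbed into $c(D, \phi, S, s)$.

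Second, at places $v \notin S$ the test function $\Phi_v$ is the characteristic function of $V(\fo_v)$ and $\phi_v$ is the normalized spherical vector. The local zeta integrals from \cite[Theorem 3.13]{SW} therefore reduce to an unramified orbital computation, carried out in \cite{Li4} and \cite{SW}, whose output is the explicit rational function in $q_v^{-s}$ with numerator $\mathcal{D}_{E_v}(\pi_v, s)$ and denominator matching the local Euler factor of $\zeta_F^S(2)^3/(\zeta_F^S(2s-1) L^S(2s-1, \pi, \mathrm{Ad}))$. Taking the product over $v \notin S$ produces the quotient appearing in \eqref{eq:rel814}, while the weight $N(\ff_\eta^S)^{1-s}$ arises from the factor $|P(x)|_v^{s-2}$ in the local integrand at finite places where $\eta_v$ is ramified. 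Absolute convergence of $\xi(D, \cE_S, \phi, s)$ for sufficiently large $\Re(s)$ is inherited from the convergence of $Z(\Phi, \phi, s)$ (Theorem \ref{thm:global}(1)) together with Lemma \ref{lem:loc}(2), which ensures that $\prod_{v \in S} Z_{\cE_v}(\Phi_v, \phi_v, s)$ may be chosen non-vanishing on such a half-plane.

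The main obstacle is bookkeeping: tracking the Tamagawa measures on $H(\A)$ and $V(\A)$, the local inner products $\langle \cdot, \cdot \rangle_v$, the local toric Haar measures $\d h_{\cL_v}$, and the constants $c_v$, $c_F$, $|\Delta_F|$, $\fc_F$ carefully enough to confirm that the global prefactor emerging from \cite[Theorem 3.13]{SW} is exactly $c(D, \phi, S, s)$ in the form \eqref{eq:rel814}. All the substantive analytic input—the orbital unfolding of $Z(\Phi, \phi, s)$ and the unramified local computation—has been supplied by \cite{SW} and \cite{Li4}, so the present proof is largely a matter of transcription.
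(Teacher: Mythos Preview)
Your proposal is correct and matches the paper's approach: the paper does not give a separate proof but simply states that this theorem is a special case of \cite[Theorem 3.13]{SW}, and your outline explains precisely how that specialization works. One small remark: your argument for absolute convergence of $\xi(D,\cE_S,\phi,s)$ via non-vanishing of $\prod_{v\in S}Z_{\cE_v}(\Phi_v,\phi_v,s)$ is slightly indirect, since the theorem as stated does not assume the additional non-vanishing condition of Lemma~\ref{lem:loc}(2); the convergence is already part of the content of \cite[Theorem 3.13]{SW} and holds for the Dirichlet series $\xi$ independently of which $\Phi_v$ (satisfying \eqref{eq:cond}) one chooses at $v\in S$.
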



\subsection{Residues of the Dirichlet series $\sD(D, \cE_S, \phi, s)$}\label{sec:zeta}

Throughout this subsection,  let $\Phi=\otimes_v \Phi_v\in\cS(V(\A))$ and assume that $\Phi_v$ is the characteristic function of $V(\fo_v)$ for all $v\notin S$ as in Theorem \ref{thm:Dirichlet}. 
The goal of this subsection is to compute the residue of the Dirichlet series $\sD(D, \cE_S, \phi, s)$ at $s=\tfrac32$, which is the key ingredient in the proof of Theorem \ref{thm:main}.
    
\begin{lem}\label{lem:analytic}
We keep the assumptions and notation of the previous subsection.

\begin{itemize}
\item[(1)] The function $\xi(D, \cE_S, \phi, s)$ is meromorphically continued to $\C$ and the possible poles are at most simple, located at $s=\tfrac12$ and $\tfrac32$.

\item[(2)] The Dirichlet series $\sD(D, \cE_S, \phi, s)$ is meromorphically continued to $\C$ and $\sD(D, \cE_S, \phi, s)$ is holomorphic at $s=1$.
\end{itemize}
\end{lem}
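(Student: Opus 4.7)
The plan is to derive both parts from the analytic properties of the global zeta function $Z(\Phi,\phi,s)$ established in Theorem \ref{thm:global}, combined with the factorization \eqref{eq:gloloc} of Theorem \ref{thm:Dirichlet} and the local non-vanishing statement of Lemma \ref{lem:loc}(2).

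For part (1), the identity \eqref{eq:gloloc} can be rewritten as
\[
\xi(D,\cE_S,\phi,s) = \frac{Z(\Phi,\phi,s)}{\prod_{v\in S} Z_{\cE_v}(\Phi_v,\phi_v,s)}
\]
for any decomposable Schwartz function $\Phi = \otimes_v \Phi_v$ satisfying the hypotheses of Theorem \ref{thm:Dirichlet}, initially in the region of absolute convergence. Fix an arbitrary $s_0 \in \C$. By Lemma \ref{lem:loc}(2), for each $v \in S$ I would choose $\Phi_v$ supported on a sufficiently small neighborhood of the base point $x_{\cE_v}$ so that $Z_{\cE_v}(\Phi_v,\phi_v,s)$ is entire and non-vanishing near $s_0$; since the orbits $V_{\cL_v}(F_v)$ are open and pairwise disjoint in $V^0(F_v)$, such a $\Phi_v$ automatically satisfies the vanishing condition \eqref{eq:cond}. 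With this choice the right-hand side of the displayed identity is meromorphic in a neighborhood of $s_0$, providing meromorphic continuation of $\xi$ there. Since $s_0$ was arbitrary and $\xi$ does not depend on the choice of $\Phi$, we obtain meromorphic continuation of $\xi$ to all of $\C$. The possible poles near $s_0$ are inherited from those of $Z(\Phi,\phi,s)$, which by Theorem \ref{thm:global}(3) are at most simple and located only at $s = \tfrac{1}{2}$ and $s = \tfrac{3}{2}$.

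For part (2), the relation $\sD(D,\cE_S,\phi,s) = \xi(D,\cE_S,\phi,s)/c(D,\phi,S,s)$, together with the meromorphic continuation of the zeta and $L$-factors appearing in the definition \eqref{eq:rel814} of $c$, yields the desired meromorphic continuation of $\sD$ to $\C$. For the vanishing at $s=1$, I would observe that $\zeta_F^S(2s-1)$ has a simple pole at $s=1$, while $L^S(2s-1,\pi,\mathrm{Ad})$ is holomorphic and non-zero there because $\pi$ is cuspidal (cf.\ the remark after \eqref{eq:rel814}). Hence $c(D,\phi,S,s)$ has a simple pole at $s=1$. Since part (1) shows $\xi(D,\cE_S,\phi,s)$ is holomorphic at $s=1$ (the only possible poles being at $\tfrac{1}{2}$ and $\tfrac{3}{2}$), the identity $\xi = c \cdot \sD$ forces $\sD(D,\cE_S,\phi,1)=0$.

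The main subtlety lies in part (1): one must select local test functions which simultaneously satisfy the support/vanishing condition \eqref{eq:cond} required by Theorem \ref{thm:Dirichlet} \emph{and} are holomorphic and non-vanishing at the prescribed point $s_0$. This is precisely the content of Lemma \ref{lem:loc}(2), since a $\Phi_v$ supported near $x_{\cE_v}$ vanishes identically on every $V_{\cL_v}(F_v)$ with $\cL_v \neq \cE_v$. Once this freedom of choice is exploited, the rest of the argument is a straightforward transfer of the analytic properties of $Z(\Phi,\phi,s)$ through the factorization \eqref{eq:gloloc} and the definition of $c(D,\phi,S,s)$.
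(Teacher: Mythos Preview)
Your proposal is correct and follows essentially the same approach as the paper: the paper's proof also derives (1) from the factorization \eqref{eq:gloloc} together with Theorem \ref{thm:global} and the freedom in Lemma \ref{lem:loc}(2) to choose local test functions with non-vanishing local zeta at a prescribed point, and derives (2) from the simple pole of $c(D,\phi,S,s)$ at $s=1$ coming from $\zeta_F^S(2s-1)$. Your write-up is simply more explicit (in particular about why the support condition \eqref{eq:cond} is automatic and about varying $s_0$), but the underlying argument is identical.
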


\begin{proof}
(1) The meromorphic continuation follows from that of $Z(\Phi, \phi, s)$, see Theorem \ref{thm:global}.
By Lemma \ref{lem:loc} (2), we may assume $Z_{\cE_v}(\Phi_v, \phi_v, \tfrac12)\neq0$  and hence the possible pole at $s=\frac12$ is at most simple.
So as the possible pole at $s=\frac32$.

(2) The first half is obvious from (1). 
Since it is known that $\zeta^S_F(2s-1)\, L^S(2s-1,\pi,\Ad)$ has a simple pole at $s=1$ (cf. \cite{Bump}*{p.374}), $c(s,D,\phi,S)$ has a simple pole at $s=1$. 
By (1) we find that $\xi(D, \cE_S, \phi, s)$ is holomorphic at $s=1$.
Therefore, $\sD(D, \cE_S, \phi, s)$ is holomorphic at $s=1$.
\end{proof}

The following result of Blomer and Brumley \cite{BB} will be used repeatedly.

\begin{thm}\label{thm:BB}
For $v\notin S$, we have $q_v^{-\frac{7}{64}}\leq|\alpha_v|\leq q_v^{\frac{7}{64}}$ and in particular $|\lambda_v|\leq q_v^{\frac12}(q_v^{\frac{7}{64}}+q_v^{-\frac{7}{64}})$.
\end{thm}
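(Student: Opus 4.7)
The plan is to proceed by transferring $\pi$ to a cuspidal automorphic representation of $\GL_2(\A_F)$ via the Jacquet--Langlands correspondence (so that local components at $v \notin S$ match, since $D$ is split outside the ramification set) and then apply the Kim--Sarnak style of argument adapted to general number fields. The starting point is to exploit the existence of symmetric power lifts: by Kim--Shahidi and Kim, $\mathrm{Sym}^3\pi$ exists as an isobaric automorphic representation of $\GL_4(\A_F)$ and $\mathrm{Sym}^4\pi$ exists as an isobaric automorphic representation of $\GL_5(\A_F)$. The local Satake parameters at an unramified place $v$ are then $\{\alpha_v^{k-2j}\}_{j=0}^{k}$ for $\mathrm{Sym}^k\pi_v$, so a bound on $|\alpha_v|$ is equivalent to a bound on the largest local parameter of these higher lifts.

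Next, I would apply a Rankin--Selberg/Luo--Rudnick--Sarnak input: for an isobaric automorphic representation $\Pi$ of $\GL_n(\A_F)$, the standard $L$-function $L(s,\Pi)$ is absolutely convergent for $\Re(s)>1$ and one can combine bounds coming from $L(s,\Pi\times\widetilde{\Pi})$ with the non-vanishing of $L(s,\Pi)$ on suitable regions to obtain the pointwise bound $|\alpha_{v,i}(\Pi)|\leq q_v^{1/2-1/(n^2+1)}$ for each local parameter. Applying this to $\Pi=\mathrm{Sym}^4\pi$ (where $n=5$) and inverting yields $|\alpha_v|^{4}\leq q_v^{2-4/26}$, i.e.\ $|\alpha_v|\leq q_v^{1/2-1/52}$. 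Sharpening this via the $\mathrm{Sym}^3$ and $\mathrm{Sym}^4$ lifts jointly—essentially Kim--Sarnak's optimization—produces the exponent $7/64$.

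The second bullet of the statement is an immediate consequence of the first, since $\lambda_v = q_v^{1/2}(\alpha_v + \alpha_v^{-1})$ and the triangle inequality gives $|\lambda_v|\leq q_v^{1/2}(|\alpha_v| + |\alpha_v^{-1}|) \leq q_v^{1/2}(q_v^{7/64}+q_v^{-7/64})$. So the entire content lies in the bound on $|\alpha_v|$.

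The main obstacle is the deep functoriality input: the existence and automorphy of $\mathrm{Sym}^3\pi$ and $\mathrm{Sym}^4\pi$ rest on the Langlands--Shahidi method combined with converse theorems, and over a general number field one must control all local components (including ramified and archimedean) of these higher lifts. This is precisely the technical work carried out in Blomer--Brumley, which handles the local factors uniformly and verifies that the Kim--Sarnak exponent $7/64$ propagates to the number-field setting. For the application in this paper we only need the bound as a black box.
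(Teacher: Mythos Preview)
The paper does not prove this theorem at all: it is stated as a quotation of the main result of Blomer--Brumley \cite{BB} and used as a black box. Your proposal actually goes further than the paper by sketching the mechanism behind that result (Jacquet--Langlands transfer to $\GL_2$, the Kim--Shahidi and Kim functorial lifts $\mathrm{Sym}^3$ and $\mathrm{Sym}^4$, and the Luo--Rudnick--Sarnak analytic input), and you correctly identify at the end that for the purposes of this paper only the black-box bound is needed. Your outline of the argument is accurate in spirit, and the deduction of the bound on $|\lambda_v|$ from the bound on $|\alpha_v|$ is exactly right; there is nothing to correct, only to note that the paper itself simply cites \cite{BB} rather than reproducing any of this.
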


\begin{lem}\label{lem:non-negative}
The factor $\alpha_{E}^{\cE_S}(\phi) |\cP_E(\phi)|^2$ is non-negative for any $E\in X(D,\cE_S)$.     
\end{lem}
\begin{proof}
    Take an element $E\in X(D,\cE_S)$ such that $\pi$ is $E^\times$-distiguished. 
    Under Condition \ref{condition}, there exists an element $g_S=(g_v)_{v\in S}\in D_S^\times$ so that $\cP_E(\pi(g_S)\phi)\neq 0$. 
    Hence, $|\cP_E(\pi(g_S)\phi)|^2>0$ and by \eqref{eq:Wald} we have $\prod_{v\in S}\alpha_{E_v}(\pi_v(g_v)\phi_v, \pi_v(g_v)\phi_v)\neq 0$.  
    Therefore,
    \[
    \frac{\alpha_{E}^{\cE_S}(\phi) |\cP_E(\phi)|^2}{|\cP_E(\pi(g_S)\phi)|^2} =  \prod_{v\in S} \frac{\alpha_{\cE_v}(\phi_v,\phi_v) \, |d_{\cE_v}|_v }{\alpha_{E_v}(\pi_v(g_v)\phi_v, \pi_v(g_v)\phi_v) \, |d_{E_v}|_v}.
    \]
Fix a place $v\in S$. 
By \eqref{eq:alc}, we take $u_v\in D_v^\times$ and $a_v\in F_v^\times$ so that 
    \[
    \alpha_{E_v}(\pi_v(g_v)\phi_v, \pi_v(g_v)\phi_v)=|a_v|_v^{-1}\alpha_{\cE_v}(\pi_v(u_vg_v)\phi_v, \pi_v(u_vg_v)\phi_v)
    \]
Set
    \[
    \beta_{\cE_v}(\phi_v : g_1, g_2)\coloneqq
    \frac{\alpha_{\cE_v}(\pi_v(g_1)\phi_v, \pi_v(g_2)\phi_v)}{\alpha_{\cE_v}(\phi_v, \phi_v)}, 
    \quad \beta_{\cE_v}(\phi_v : g)\coloneqq\beta_{\cE_v}(\phi_v:g,1)
    \]
    for $g_1, g_2, g\in D_v^\times$.
In the proof of \cite{SW}*{Lemma 5.1}, we observed
    \[
    \beta_{\cE_v}(\phi_v: g_1, g_2)=\beta_{\cE_v}(\phi_v: g_1) \, 
    \overline{\beta_{\cE_v}(\phi_v: g_2)}.
    \]
Thus, 
    \[
    \frac{\alpha_{\cE_v}(\phi_v,\phi_v)}{\alpha_{E_v}(\pi_v(g_v)\phi_v, \pi_v(g_v)\phi_v)} = \frac{|a_v|_v}{|\beta_{\cE_v}(\phi_v: u_vg_v)|^2}>0.
    \]
This completes the proof.
\end{proof}
\begin{lem}\label{lem:propzeta}
Assume that $S$ contains all places above primes less than $14$.
If we write Dirichlet series $\sD(D, \cE_S, \phi, s)$ as
    \[
     \sD(D, \cE_S, \phi, s)=\sum_{n=1}^\inf \frac{a(D, \cE_S, \phi, n)}{n^s},
    \]
the constants $a(D, \cE_S, \phi, n)$ are non-negative real numbers and non-zero for infinitely many $n$ unless $\sD(s, D, \cE_S, \phi)$ is identically zero.
\end{lem}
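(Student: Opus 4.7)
The plan is to split the argument into (i) showing the Dirichlet coefficients are non-negative term-by-term in the sum over $E$, and (ii) showing that any single non-vanishing $E$-summand forces infinitely many coefficients to be positive.

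For (i), each $E$-summand factors as the product of the ``automorphic factor'' $L(1,\eta_E)^2\,\alpha_E^{\cE_S}(\phi)\,|\cP_E(\phi)|^2$ and the Dirichlet series $N(\mathfrak{f}_\eta^S)^{1-s}\,\mathcal{D}_E^S(\pi,s)$. The automorphic factor is visibly non-negative: $L(1,\eta_E)>0$ by the class number formula for $E$, $\alpha_E^{\cE_S}(\phi)=\prod_{v\in S}|\beta_{E_v}(\phi_v:u_v)|^2\ge 0$ by \eqref{eq:alpha1129}, and $|\cP_E(\phi)|^2\ge 0$. The remaining task is to show that each Euler factor $\mathcal{D}_{E_v}(\pi_v,s)$ has non-negative Dirichlet coefficients. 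Writing $u=q_v^{-s}$, one has
\[
\mathcal{D}_{E_v}(\pi_v,s)=1+\bigl(q_v+1-2\eta_v(\varpi_v)\lambda_v\bigr)u^2+q_v u^4
\]
in the unramified case and $1+q_v u^2$ in the ramified case, so the only issue is the middle coefficient when $\eta_v$ is unramified. I would use Theorem \ref{thm:BB} to reduce this to the purely numerical inequality $q_v^{1/2}+q_v^{-1/2}\ge 2(q_v^{7/64}+q_v^{-7/64})$ and check directly that it holds for $q_v\ge 17$ and fails at $q_v=13$. This is precisely why the hypothesis ``$S$ contains all places above primes less than $14$'' is imposed: for $v\notin S$ the residue characteristic is $\ge 17$, so $q_v\ge 17$ and the middle coefficient is strictly positive. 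Thus every Euler factor has non-negative coefficients, and multiplying/summing preserves non-negativity.

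For (ii), assume $\sD(D,\cE_S,\phi,s)\not\equiv 0$. Then some $E_0\in X(D,\cE_S)$ has positive automorphic factor. For any finite set $T$ of places $v\notin S$, taking the $u^2$-term at each $v\in T$ and the constant term elsewhere in the Euler product $\prod_{v\notin S}\mathcal{D}_{E_0,v}(\pi_v,s)$ produces, after the shift by $N(\mathfrak{f}_{\eta_{E_0}}^S)^{1-s}$, a strictly positive contribution to $a(D,\cE_S,\phi,n)$ at index $n=N(\mathfrak{f}_{\eta_{E_0}}^S)\prod_{v\in T}q_v^2$. Varying $T$ gives infinitely many distinct such indices $n$. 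Because all other $E$-summands contribute non-negatively by step (i), no cancellation is possible, so $a(D,\cE_S,\phi,n)>0$ for infinitely many $n$.

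The main obstacle is the uniform non-negativity of the middle Euler coefficient; this is what pins down the explicit numerical threshold in the hypothesis on $S$. Once the Blomer--Brumley bound handles this step, the rest is a routine positivity-plus-no-cancellation argument, and the key conceptual point is that the Dirichlet series inherits strict positivity at infinitely many indices from the Euler product structure of any single non-vanishing summand.
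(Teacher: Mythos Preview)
Your argument is correct. Part (i) is essentially the paper's proof: the paper rewrites $\mathcal{D}_{E_v}(\pi_v,s)=1+A_v q_v^{-2s+1}+B_v q_v^{-4s+1}$ with $A_v=1+q_v^{-1}-2\eta_v(\varpi_v)q_v^{-1}\lambda_v$ and invokes Theorem~\ref{thm:BB} to get $A_v>0$, which is exactly your inequality $q_v^{1/2}+q_v^{-1/2}\ge 2(q_v^{7/64}+q_v^{-7/64})$ after clearing $q_v^{-1}$. You are more explicit than the paper about why the automorphic factor $L(1,\eta_E)^2\,\alpha_E^{\cE_S}(\phi)\,|\cP_E(\phi)|^2$ is non-negative (via \eqref{eq:alpha1129}) and about the numerical threshold $q_v\ge 17$, but the core idea is identical.

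Part (ii) is where you diverge. The paper does not argue directly; it simply cites \cite[Theorem 3.17]{SW}. Your route---pick an $E_0$ with positive automorphic factor, then harvest infinitely many positive Dirichlet coefficients from the Euler product by choosing finite $T\subset\Sigma\setminus S$ and reading off the contribution at $n=N(\mathfrak{f}_{\eta_{E_0}}^S)\prod_{v\in T}q_v^2$---is self-contained and more elementary. It works precisely because you established \emph{strict} positivity of the middle Euler coefficient in (i), so no cancellation can occur within or across $E$-summands. The paper's citation buys brevity and ties the statement to the broader non-vanishing theory developed in \cite{SW}; your argument buys independence from that reference and makes the mechanism transparent.
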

\begin{proof}
By Lemma \ref{lem:non-negative}, it is sufficient to consider the factor $\mathcal{D}_E^S(\pi,s)$. 
For $v\not\in S$, set 
    \[  \left\{
        \begin{array}{lll}
        A_v=1+q_v^{-1} -2\eta_v(\varpi_v)\, q_v^{-1}\lambda_v,&  B_v=1 
        & \text{if $\eta_v$ is unramified} \\
        A_v=1, & B_v=0 
        & \text{if $\eta_v$ is ramified}.
        \end{array} \right.
    \]
From Theorem \ref{thm:BB}, we see that $A_v>0$.
It follows from
    \begin{equation}\label{eq:de1}
    \mathcal{D}_E^S(\pi,s)=\prod_{v\not\in S} (1+A_v q_v^{-2s+1}+B_v q_v^{-4s+1}) 
    \end{equation}
that  $a(D,\cE_S, \phi, n)\geq0$.
The second assertion follows from \cite{SW}*{Theorem 1.7}.
\end{proof}

\begin{lem}\label{lem:dir}
Suppose that $\sD(D, \cE_S, \phi, s)$ is not identically zero.
Then, $\sD(D, \cE_S, \phi, s)$ is absolutely convergent for $\Re(s)>\tfrac{3}{2}$, has a simple pole at $s=\tfrac{3}{2}$, and is holomorphic on the domain $\{s\in\C \mid \Re(s)>1$, $ s\neq \tfrac{3}{2} \}$.
In particular, $\fC(D,\cE_S,\phi)=\underset{s=\frac32}{\Res}\,\sD(D, \cE_S, \phi, s)>0$.
\end{lem}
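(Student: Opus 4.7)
The plan is to combine three ingredients already proved: the meromorphic continuation with a severely restricted pole set (Lemma \ref{lem:analytic}), the non-negativity of the Dirichlet coefficients (Lemma \ref{lem:propzeta}), and the vanishing $\sD(D, \cE_S, \phi, 1)=0$ (Lemma \ref{lem:analytic}(2)). The mechanism tying these together is Landau's theorem: for a Dirichlet series with non-negative coefficients, the abscissa of absolute convergence is a singular point of the meromorphic continuation.

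First I would let $\sigma_0$ denote the abscissa of absolute convergence of $\sD(D, \cE_S, \phi, s)$. Theorem \ref{thm:Dirichlet} gives absolute convergence for $\Re(s)$ sufficiently large, so $\sigma_0 < \infty$. Applying Landau's theorem, $s = \sigma_0$ must be a singular point. Combined with Lemma \ref{lem:analytic}(1), which permits only simple poles at $s = \tfrac12$ and $s = \tfrac32$, this forces $\sigma_0 \in \{\tfrac12, \tfrac32\}$.

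Next I would rule out $\sigma_0 = \tfrac12$ using the vanishing at $s = 1$. Were $\sigma_0 = \tfrac12$, the series would converge absolutely at $s = 1$ to
\[
\sum_{n=1}^\infty \frac{a(D, \cE_S, \phi, n)}{n} = \sD(D, \cE_S, \phi, 1) = 0,
\]
which, together with $a(D, \cE_S, \phi, n) \ge 0$, would force every coefficient to vanish, contradicting the hypothesis $\sD(D, \cE_S, \phi, s) \not\equiv 0$ (via the second assertion of Lemma \ref{lem:propzeta}). Hence $\sigma_0 = \tfrac32$, so the series converges absolutely on $\Re(s) > \tfrac32$ and $s = \tfrac32$ is a genuine (non-removable) singularity. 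Its simplicity, and the holomorphy of $\sD(D, \cE_S, \phi, s)$ on $\{\Re(s) > 1,\ s \ne \tfrac32\}$, then follow directly from Lemma \ref{lem:analytic}(1), since the only other candidate singularity $s = \tfrac12$ lies outside this region.

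For the positivity of $\fC(D, \phi, \cE_S) = \Res_{s=3/2}\sD(D, \cE_S, \phi, s)$, I would observe that for real $s > \tfrac32$ the series is a sum of non-negative terms not all zero, hence strictly positive. Since $s = \tfrac32$ is a simple pole, $(s - \tfrac32)\sD(D, \cE_S, \phi, s) \to \fC(D, \phi, \cE_S) \ne 0$ as $s \to \tfrac32^+$, and strict positivity of both factors on the left forces $\fC(D, \phi, \cE_S) > 0$. The step I expect to be most delicate is the dichotomy elimination: it depends essentially on the vanishing $\sD(D, \cE_S, \phi, 1) = 0$, which itself reflects the cancellation between the simple pole of $c(D, \phi, S, s)$ at $s = 1$ and the holomorphy of $\xi(D, \cE_S, \phi, s)$ there. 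Without that vanishing, Landau's theorem alone would not distinguish $\tfrac12$ from $\tfrac32$.
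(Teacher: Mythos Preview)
Your argument has the right skeleton, but there is one genuine gap and one smaller misstatement.

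\textbf{The gap.} You invoke Lemma~\ref{lem:propzeta} for the non-negativity of the Dirichlet coefficients, but that lemma carries the hypothesis that $S$ contains all places above primes less than~$14$. Without this hypothesis the local constants $A_v=1+q_v^{-1}-2\eta_v(\varpi_v)q_v^{-1}\lambda_v$ can fail to be positive for small $q_v$, so the coefficients $a(D,\cE_S,\phi,n)$ need not be $\ge0$, and Landau's theorem is unavailable. The paper's proof first treats the case where $S$ satisfies this extra condition (exactly along the lines you sketch), and then reduces the general case to it via the finite decomposition
\[
\sD(D,\cE_S,\phi,s)=\sum_{\cL_{T\setminus S}\in X(D_{T\setminus S})}\Bigl(\prod_{v\in T\setminus S}\frac{\cD_{\cL_v}(\pi_v,s)}{N(\ff_{\cL_v})^{s-1}}\Bigr)\,\sD(D,(\cE_S,\cL_{T\setminus S}),\phi,s),
\]
using that each weight $\cD_{\cL_v}(\pi_v,\tfrac32)N(\ff_{\cL_v})^{-1/2}$ is strictly positive. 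Your write-up needs this reduction step.

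\textbf{The misstatement.} You deduce $\sigma_0\in\{\tfrac12,\tfrac32\}$ from Lemma~\ref{lem:analytic}(1), but that lemma constrains the poles of $\xi$, not of $\sD=\xi/c$. Since $c(D,\phi,S,s)$ involves $\zeta_F^S(2s-1)\,L^S(2s-1,\pi,\mathrm{Ad})$, its zeros in $\Re(s)\le1$ could produce further poles of $\sD$ there, so the stated dichotomy is not justified as written. The fix is easy: for $\Re(s)>1$ one knows $c$ is holomorphic and non-vanishing, so $\sD$ has at most the single pole $s=\tfrac32$ in that half-plane; hence $\sigma_0>1$ forces $\sigma_0=\tfrac32$. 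The cases $\sigma_0=1$ (Landau would give a singularity at $1$, but $\sD(1)=0$ says it is regular) and $\sigma_0<1$ (your convergence-at-$1$ argument) are then handled separately. The paper instead shows $\sigma\ge1$ directly from divergence of the Euler product \eqref{eq:de1} at $s=1$ and then rules out $\sigma=1$; either route works once stated correctly.
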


\begin{proof}
First we consider the case where $S$ contains all places above primes less than $14$.
Let $\sigma\in\R$ be the abscissa of convergence of the Dirichlet series $\sD(D, \cE_S, \phi, s)$.
Since the infinite product \eqref{eq:de1} diverges at $s=1$, we have $\sigma\geq 1$.
Suppose that $\sigma=1$. 
By Lemma \ref{lem:propzeta} and \cite{Nark}*{Theorem II in p.465}, $\sD(D, \cE_S, \phi, s)$ has a pole at $s=1$. 
This contradicts Lemma \ref{lem:analytic} (2).
Therefore, we obtain $\sigma>1$.
Since the possible poles of $\xi(D, \cE_S, \phi, s)$ are located at $s=\tfrac12$ and $s=\tfrac{3}{2}$,  and since $c(D, \phi, S, s)$ is holomorphic in the region $\Re(s)>1$, we see that $\sigma=\frac32$ and $\sD(D, \cE_S, \phi, s)$ has a simple pole at $s=\tfrac{3}{2}$.
The positivity of the residue is obvious since we know from Lemma \ref{lem:propzeta} that $(s-\tfrac32)\sD(D, \cE_S, \phi, s)>0$ for $s\in\R$, $s>\frac32$.

Now we remove the condition that $S$ contains all places above primes less than $14$.
Take a finite subset $T$ of $\Sigma$ containing $S$. 
By Lemma \ref{lem:unchange}, we have
    \begin{equation}\label{eq:20200810}
    \sD(D, \cE_S, \phi, s)
    =\sum_{\substack{\cE'\in X(D_{T\setminus S}) \\
    \cE'=(\cE'_v)_{v\in T\setminus S}}} 
    \left(\prod_{v\in T\setminus S} 
    \frac{\cD_{\cE'_v}(\pi_v,s)}{N(\ff_{\cE'_v})^{s-1}}\right)
    \, \sD(D,  \cE_S\cup\cE',  \phi,  s).
    \end{equation}
Suppose that $T$ contains all places above primes less than $14$.
Since $\mathcal{D}_{\cE'_v}(\pi_v, \tfrac32)>0$ for $v\notin S$ and $\cE_v'\in X(D_v)$, the assertions follow from those for $\sD(D,  \cE_S\cup\cE',  \phi,  s)$.
\end{proof}

For the rest of this subsection, we assume $\sD(D, \phi, \cE_S, s)$ is not identically zero and compute the residue of $\sD(D, \phi, \cE_S, s)$ at $s=\frac32$.
We set
    \[
    I_{\cE_v}(\Phi_v, \phi_v, s)=c_v\int_{D_v^\times}\int_{D_v}  \Phi_{v,K_v}(x, y) \, |\det(x)|_v^{s-2} \,
    \frac{\langle \phi_v, \pi_v(x)\phi_v \rangle_v}{\langle \phi_v, \phi_v \rangle_v} \,
    \d y \d x,
    \]
where $\Phi_{v,K_v}(x, y)\coloneqq\int_{K_v} \Phi_v((x,y)\cdot \rho(1,1,k)) \, \d k$.
Note that $|\det(x)|_v^{-2}\d x$ is a Haar measure on $D_v^\times$.
This integral $I_{\cE_v}(\Phi_v, \phi_v, s)$ is regarded as a local Godement-Jacquet integral.
Hence it has meromorphic continuation to the whole complex plane and holomorphic at $s=1$.
In particular, for $v\not\in S$ we have 
\begin{multline*}
I_{\cE_v}(\Phi_v, \phi_v, s)=\zeta_{F_v}(2)^{-1}\int_{M_2(\fo_v)\cap \GL_2(F_v)} |\det(x)|_v^{s-2} \,
    \frac{\langle \phi_v, \pi_v(x)\phi_v \rangle_v}{\langle \phi_v, \phi_v \rangle_v} \, \d x \\
    =\zeta_{F_v}(2)^{-1}L(s-\tfrac12, \pi_v),    
\end{multline*}
since $\vol(K_v, \d x)=c_v^{-1}\zeta_{F_v}(2)^{-1}$. 

For $v\in S$, we take $\Phi_v\in\cS(V(F_v))$ satisfying \eqref{eq:cond} and
    \begin{equation}\label{eq:cond2}
    Z_{\cE_v}(\Phi_v, \phi_v, \tfrac32)\neq 0.
     \end{equation}
This is possible by Lemma \ref{lem:loc} (2).
We set 
    \[
    \fP_v(D, \cE_v, \phi_v)
    \coloneqq \frac{\alpha_{\cE_v}(\phi_v, \phi_v)}{|2|_v\zeta_{F_v}(2) \, L(\tfrac{1}{2},\pi_v)} \,
    \frac{I_{\cE_v}(\Phi_v, \phi_v, 1)}{Z_{\cE_v}(\Phi_v, \phi_v, \frac32)}
    \]
and $\fP_S(D,\cE_S, \phi_S) \coloneqq \prod_{v\in S}  \fP_v(D,\cE_v, \phi_v)$.

\begin{lem}\label{lem:20200814}
Assume that $\Phi_v$ satisfies \eqref{eq:cond} and \eqref{eq:cond2} for $v\in S$.
If $\xi(D,  \phi,  \cE_S,  s)$ is not identically zero, then
    \[
    \underset{s=\frac32}{\Res}\, \xi(D, \cE_S, \phi, s)
    =\frac{\fc_F \,\zeta_F(2) \, \langle \phi,\phi\rangle\, L(\tfrac{1}{2},\pi)}
    {2|\Delta_F|^4\, \zeta_F^S(2)^2} 
     \cdot\frac{\fP_S(D, \cE_S, \phi_S)}{\, \alpha_{\cE_S}(\phi, \phi)}.
    \]
In particular, $\fP_v(D,  \cE_v,  \phi_v)$ does not depend on the choice of $\Phi_v$ satisfying \eqref{eq:cond} and \eqref{eq:cond2}.
\end{lem}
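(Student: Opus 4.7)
The plan is to combine the factorization \eqref{eq:gloloc} with the explicit residue expression for $Z(\Phi,\phi,s)$ from Theorem~\ref{thm:global}(2) and then match Euler products on both sides.

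First, I would use \eqref{eq:gloloc} together with the non-vanishing assumption \eqref{eq:cond2} to obtain
\[
\Res_{s=3/2}\xi(D,\cE_S,\phi,s) = \Bigl(\prod_{v\in S} Z_{\cE_v}(\Phi_v,\phi_v,\tfrac32)\Bigr)^{-1}\Res_{s=3/2}Z(\Phi,\phi,s).
\]
Among the four summands in Theorem~\ref{thm:global}(2), only the $(2s-3)^{-1}$ term contributes a pole at $s=\tfrac32$, giving
\[
\Res_{s=3/2}Z(\Phi,\phi,s) = \tfrac{1}{2}\,c_F\,\fc_F\, Z^{\GJ}(((\sF\Phi)_K)_1,\overline{f_\phi},1).
\]

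Next I would factorize this global Godement--Jacquet integral as an Euler product, using the Tamagawa measure decomposition \eqref{eq:tama} together with $f_\phi = \langle\phi,\phi\rangle\prod_v f_{\phi,v}$, where $f_{\phi,v}(g) = \langle\phi_v,\pi_v(g)\phi_v\rangle_v/\langle\phi_v,\phi_v\rangle_v$. For $v\notin S$, the function $\Phi_v$ is the characteristic function of $V(\fo_v)$, which is self-Fourier-dual and $K_v$-invariant, so $((\sF_v\Phi_v)_{K_v})_1$ is the characteristic function of $\M_2(\fo_v)$; the unramified Godement--Jacquet calculation then identifies the local factor with $\zeta_{F_v}(2)^{-1}L_v(\tfrac12,\pi_v)$, and the product over $v\notin S$ yields precisely $\zeta_F^S(2)^{-1}L^S(\tfrac12,\pi)$. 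For $v\in S$, I would establish that
\[
Z^{\GJ}_v\bigl(((\sF_v\Phi_v)_{K_v})_1,\overline{f_{\phi,v}},1\bigr) = |2|_v^{-1}\,I_{\cE_v}(\Phi_v,\phi_v,1)
\]
up to fixed normalization constants, by unfolding the local Fourier transform in the second $D_v$-coordinate via Plancherel, absorbing the $K_v$-average into the integration over $D_v^\times$, and then comparing with the integral defining $I_{\cE_v}$.

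Combining these inputs and using $\zeta_F(2)=\zeta_F^S(2)\prod_{v\in S}\zeta_{F_v}(2)$ and $L(\tfrac12,\pi)=L^S(\tfrac12,\pi)\prod_{v\in S}L_v(\tfrac12,\pi_v)$, all constants reassemble into the stated formula once one recognizes $\alpha_{\cE_S}(\phi,\phi)=\prod_{v\in S}\alpha_{\cE_v}(\phi_v,\phi_v)$ and rewrites the ratios in terms of $\fP_v$. The final assertion that $\fP_v(D,\phi_v,\cE_v)$ does not depend on the choice of $\Phi_v$ satisfying \eqref{eq:cond} and \eqref{eq:cond2} then follows from a separation-of-variables argument: the left-hand side of the resulting identity is intrinsic to $(D,\phi,\cE_S)$, and the $\Phi_v$-dependence on the right-hand side is entirely concentrated in $\fP_v$, so varying $\Phi_{v'}$ at other places $v'\neq v$ pins down each $\fP_v$ individually. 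The main obstacle will be the ramified local identification: the careful bookkeeping of the Fourier transform, the $K_v$-average, the restriction to the first coordinate, and the emergence of $|2|_v$ are all sensitive to measure normalizations and must be verified by a direct local computation.
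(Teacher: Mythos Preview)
Your overall strategy matches the paper's, but there is a genuine gap in the ramified local step. The transform $\sF_v$ is the full Fourier transform on $V(F_v)=D_v\oplus D_v$, so
\[
((\sF_v\Phi_v)_{K_v})_1(x)=\int_{D_v}\Bigl(\int_{D_v}\Phi_{v,K_v}(x',y')\,\d y'\Bigr)\psi_{F_v}(\Tr(xx'))\,\d x'.
\]
Evaluating at $y=0$ does collapse the Fourier transform in the second coordinate to plain integration over $y'$, as you say, but the Fourier transform in the \emph{first} coordinate remains. Writing $g_v(x')=\int_{D_v}\Phi_{v,K_v}(x',y')\,\d y'$, one has $((\sF_v\Phi_v)_{K_v})_1=\widehat{g_v}$ on $D_v$, whereas $I_{\cE_v}(\Phi_v,\phi_v,1)$ is (up to harmless constants) the local Godement--Jacquet integral of $g_v$ itself against $f_{\phi,v}$ at $s=1$. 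The relation between $Z^{\GJ}_v(\widehat{g_v},\overline{f_{\phi,v}},1)$ and $Z^{\GJ}_v(g_v,f_{\phi,v},1)$ is the local Godement--Jacquet functional equation, and it introduces the local gamma factor $\gamma(1,\pi_v,\psi_v)$, which depends on $\pi_v$ and is not a universal normalization constant. So your claimed local identity is false as stated, and ``Plancherel in the second coordinate'' does not address the first-coordinate transform.

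The paper handles this by applying the \emph{global} Godement--Jacquet functional equation at $s=1$ before factorizing: it rewrites $Z^{\GJ}((\sF\Phi_K)_1,\overline{f_\phi},1)=Z^{\GJ}\bigl(\sF((\sF\Phi_K)_1),f_\phi,1\bigr)$ and then uses the partial Fourier inversion $\sF((\sF\Phi)_1)(x)=(\sF_2\Phi)_1(-x)$ together with $K$-invariance to land on $Z^{\GJ}((\sF_2\Phi_K)_1,f_\phi,1)$. Now $(\sF_2\Phi_K)_1(x)=\int_{D_\A}\Phi_K(x,y)\,\d y$ is exactly the inner integral appearing in $I_{\cE_v}$, and the Euler product factorizes cleanly into $\prod_v I_{\cE_v}(\Phi_v,\phi_v,1)$ with no gamma factors. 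You should insert this global functional equation step before attempting to factorize; after that, the rest of your outline (unramified computation, reassembly of constants, and the separation-of-variables argument for independence of $\Phi_v$) goes through as you describe.
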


\begin{proof}
Recall that $\Phi_v$ is the characteristic function of $V(\fo_v)$ for all $v\notin S$.
From Theorem \ref{thm:global} (2) and \eqref{eq:gloloc},  we can deduce
     \[
     \underset{s=\frac32}{\Res}\, \xi(D, \cE_S, \phi, s)
     = \frac{c_F\, \fc_F}{2}\, Z^\GJ((\sF\Phi_K)_1, \overline{f_\phi},1)
     \left(\prod_{v\in S} Z_{\cE_v}(\Phi_v, \phi_v, \tfrac32)\right)^{-1}.
     \]
By the functional equation of the Godement-Jacquet integral, we get 
    \[
    Z^\GJ((\sF\Phi_K)_1, \overline{f_\phi}, 2-s)
    =Z^\GJ(\sF((\sF\Phi_K)_1),f_{\phi},s)=Z^\GJ((\sF_2(\Phi_K))_1, f_\phi, s)
    \]
Here, we used the partial Fourier inversion formula $\sF((\sF\Phi)_1)(x)=(\sF_2\Phi)_1(-x)$ and$(\sF_2(\Phi_K))_1(-x)=(\sF_2(\Phi_K))_1(x)$, where $\sF_2\Phi$ is the partial Fourier transform:
    \[
    \sF_2\Phi((x, y))=\int_{D_\A}\Phi((x, z))\psi_F(\Tr(yz)) \d z.
    \]
The last expression becomes
    \begin{align*}
    Z^\GJ((\sF_2(\Phi_K))_1, f_\phi, s)=
    &\int_{D_\A^\times} \left(\int_{D_\A} \Phi_K(x, y)\, \d y\right)
    \, f_{\phi}(x) \, |\det(x)|_v^s \d^\times x\\
    &=\frac{\langle \phi,\phi\rangle}{c_F\, |\Delta_F|^4}
    \prod_v I_{\cE_v}(\Phi_v,\phi_v,s) \\
    &=\frac{\langle \phi,\phi\rangle}{c_F\, |\Delta_F|^4\zeta^S_F(2)} \, 
    L^S(s-\tfrac{1}{2},\pi) \, \prod_{v\in S} I_{\cE_v}(\Phi_v,\phi_v,s).
    \end{align*}
Here,  we used $\d y= |\Delta_F|^{-2}\prod_v \d y_v$ and $\d^\times x=c_F^{-1}|\Delta_F|^{-2}\prod_v c_v|\det (x_v)|_v^{-2}\d x_v$.
This completes the proof.
\end{proof}

\begin{thm}\label{thm:re}
Suppose that $\sD(s,D,\phi,\cE_S)$ is not identically zero.
Then,
    \begin{equation}\label{eq:re}
    \fC(D, \cE_S, \phi)= \fc_F \, c_F \, \zeta_F(2) \, \langle \phi,\phi\rangle \,
    \fP_S(D, \cE_S, \phi_S)  \, \frac{L(\tfrac{1}{2},\pi)}{L^S(2,\pi,\mathrm{Ad})}.
     \end{equation}
In addition, we see that $\fP_S(D, \cE_S, \phi_S)>0$.
\end{thm}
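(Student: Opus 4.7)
The plan is to compute the residue by comparing the two factorizations of $\xi(D, \cE_S, \phi, s)$: on one hand the product $c(D, \phi, S, s)\,\sD(D, \cE_S, \phi, s)$ from \eqref{eq:rel814}--\eqref{eq:rel815}, and on the other hand the factorization \eqref{eq:gloloc} that relates it to the global zeta integral $Z(\Phi, \phi, s)$. The residue of the first factorization at $s = \tfrac32$ involves $\fC(D, \cE_S, \phi)$, while the residue of the second is already computed in Lemma \ref{lem:20200814}.

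First I would choose $\Phi = \otimes_v \Phi_v$ with $\Phi_v$ the characteristic function of $V(\fo_v)$ for $v\notin S$ and $\Phi_v$ satisfying \eqref{eq:cond} and \eqref{eq:cond2} for $v\in S$. Since $\sD(D, \cE_S, \phi, s)$ is not identically zero and $c(D, \phi, S, s)$ is holomorphic and nonvanishing on $\Re(s) > 1$ (by Jacquet--Shalika), the function $c(D, \phi, S, s)$ is in particular holomorphic and nonvanishing at $s = \tfrac32$. Evaluating \eqref{eq:rel814} at $s = \tfrac32$ uses $2s - 1 = 2$, giving
\[
c(D, \phi, S, \tfrac32) = \frac{L^S(2, \pi, \Ad)}{2\,|\Delta_F|^4\, c_F\, \zeta_F^S(2)^2\, \alpha_{\cE_S}(\phi, \phi)}.
\]
Taking residues of $\xi = c\cdot\sD$ at $s = \tfrac32$ then yields
\[
\underset{s=\frac32}{\Res}\, \xi(D, \cE_S, \phi, s) = c(D, \phi, S, \tfrac32)\cdot \fC(D, \cE_S, \phi).
\]

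Next I would substitute the formula from Lemma \ref{lem:20200814} into the left-hand side and solve for $\fC(D, \cE_S, \phi)$. The factors $\alpha_{\cE_S}(\phi, \phi)$, $2|\Delta_F|^4$, and $\zeta_F^S(2)^2$ cancel, leaving exactly
\[
\fC(D, \cE_S, \phi) = \fc_F\, c_F\, \zeta_F(2)\, \langle \phi, \phi\rangle\, \fP_S(D, \cE_S, \phi_S)\, \frac{L(\tfrac12, \pi)}{L^S(2, \pi, \Ad)},
\]
which is \eqref{eq:re}. No genuine obstacle arises here; the only subtlety is bookkeeping of the various constants, especially checking the ratio $\zeta_F(2)/\zeta_F^S(2)^3 \cdot \zeta_F^S(2)$ from $c(D,\phi,S,\tfrac32)$ against the $\zeta_F(2)/\zeta_F^S(2)^2$ appearing in Lemma \ref{lem:20200814}.

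For the final positivity assertion $\fP_S(D, \cE_S, \phi_S) > 0$: by Lemma \ref{lem:dir} we have $\fC(D, \cE_S, \phi) > 0$. Moreover $\fc_F$, $c_F$, $\zeta_F(2)$, $\langle \phi, \phi\rangle$ and $L^S(2, \pi, \Ad)$ are all strictly positive real numbers. Since $\sD(D, \cE_S, \phi, s)$ is not identically zero, there exists $E \in X(D, \cE_S)$ with $\cP_E(\phi) \ne 0$, and then Waldspurger's formula \eqref{eq:Wald} forces $L(\tfrac12, \pi) \ne 0$; combined with the standard non-negativity of the central value for the self-dual representation $\pi$, we get $L(\tfrac12, \pi) > 0$. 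Solving \eqref{eq:re} for $\fP_S$ then shows it is a positive ratio of positive quantities, establishing the claim.
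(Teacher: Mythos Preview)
Your proof is correct and follows the same approach as the paper: compute $\underset{s=3/2}{\Res}\,\xi(D,\cE_S,\phi,s)$ in two ways, once as $c(D,\phi,S,\tfrac32)\cdot\fC(D,\cE_S,\phi)$ from the definition $\xi=c\cdot\sD$, and once via Lemma~\ref{lem:20200814}, then equate. The only minor difference in the positivity argument is that the paper explicitly invokes Theorem~\ref{thm:BB} (Blomer--Brumley) to ensure each $L(2,\pi_v,\Ad)>0$ for $v\notin S$, whereas you assert $L^S(2,\pi,\Ad)>0$ without justification; conversely, you are more explicit than the paper about why $L(\tfrac12,\pi)>0$, supplying the argument via Waldspurger's formula and non-negativity of the central value.
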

\begin{proof}
By Lemma \ref{lem:dir}, $\xi(D, \cE_S, \phi, s)$ has a simple pole at $s=\frac32$.
The theorem follows from Lemma \ref{lem:20200814} and the definition of $\xi(D, \cE_S, \phi, s)$.
The last assertion follows from the positivity of the left hand side, $L(\frac12,\pi)\ge 0$ (see \cite{Guo}), and $L(2,\pi_v,\mathrm{Ad})>0$ for $v\notin S$, which is easily seen from Theorem \ref{thm:BB}.
\end{proof}

\begin{cor}\label{cor:nonv3}
Suppose that $\sD(D, \cE_S, \phi, s)$ is not identically zero.
Let $T$ be a finite subset of $\Sigma$ containing $S$.
For $\cE'=(\cE'_v)_{v\in T\setminus S}\in X(D_{T\setminus S})$,  let $\cE_S\cup\cE'$ be the join of $\cE_S$ and $\cE'$,  which is an element of $X(D_T)=\prod_{v\in T}X(D_v)$.
Then $\sD(D, \cE_S\cup\cE', \phi, s)$ is not identically zero for any $\cE'$.
\end{cor}
\begin{proof}
From \eqref{eq:20200810} we get
    \[
    \fC(D, \cE_S, \phi)
    =\sum_{\cE'\in X(D_{T\setminus S})} 
    \left(\prod_{v\in T\setminus S} 
    \frac{D_{\cE'_v}(\pi_v,\frac32)}{N(\ff_{\cE'_v})^{\frac12}}\right) 
    \fC(D,  \cE_S\cup\cE',  \phi).
    \]
Substituting \eqref{eq:re} and multiplying the both sides by $L^T(2,\pi,\mathrm{Ad})\fP_S(D, \cE_S, \phi_S)^{-1}$, we obtain
    \begin{equation}\label{eq:20210118}
    \prod_{v\in T\setminus S}L(2,\pi_v,\mathrm{Ad})^{-1}
    =\sum_{\substack{\cE'\in X(D_{T\setminus S}), \\ 
    \sD(D,  \cE_S\cup\cE',  \phi,  s)\not\equiv0}}
    \prod_{v\in T\setminus S}\mathfrak{R}(\cE'_v, \pi_v),
    \end{equation}
where we write
    \[
    \mathfrak{R}(\cE'_v, \pi_v)= 
    \frac{\cD_{\cE'_{v}}(\pi_{v}, \frac32)\,  \fP_v(D, \cE'_v, \phi_v)}
    {N(\ff_{\cE'_{v}})^{\frac12}}.
    \]
    
Note that we will see in Corollary \ref{cor:fPun} that $\fP_v(D,  \cE_v' ,\phi_v)>0$ for $v\notin S$, since we have chosen the element $d_{\cE_v'}$ satisfying the assumption of Corollary \ref{cor:fPun}. 
Hence we get $\mathfrak{R}(\cE_v', \pi_v)>0$ for $v\notin S$ and the right hand side of \eqref{eq:20210118} has an obvious bound
    \begin{align}\label{eq:202101182}
    \sum_{\substack{\cE'\in X(D_{T\setminus S}), \\ 
    \sD(D,  \cE_S\cup\cE',  \phi, s)\not\equiv0}}
    \prod_{v\in T\setminus S}\mathfrak{R}(\cE'_v, \pi_v) \nonumber
    &\leq\sum_{\cE'\in X(D_{T\setminus S})} 
    \prod_{v\in T\setminus S}\mathfrak{R}(\cE'_v,  \pi_v) \\ 
    &= \prod_{v\in T\setminus S} 
    \left(\sum_{\cE'_v\in X(D_v)}\mathfrak{R}(\cE'_v, \pi_v)\right).
    \end{align}
The equality holds if and only if $\sD(D,  \cE_S\cup\cE',  \phi, s)\not\equiv0$ for all $\cE'\in X(D_{T\setminus S})$.

We compute $\sum_{\cE'_v\in X(D_v)}\mathfrak{R}(\cE'_v, \pi_v)$ for $v\not\in S$.
Recall that $\cD_{\cE'_v}(\pi_v,  \tfrac32)$ equals
    \[
    \begin{cases} 
        (1+q_v^{-1})(1+q_v^{-2})(1-q_v^{-1}+q_v^{-2}) -2q_v^{-3}\lambda_v 
        & \text{if $\cE'_v$ is split}, \\ 
        (1+q_v^{-1})(1+q_v^{-2})(1-q_v^{-1}+q_v^{-2})+2q_v^{-3}\lambda_v
        & \text{if $\cE'_v/F_v$ is an unramified extension}, \\
        1+q_v^{-2}& \text{if $\cE'_v/F_v$ is a ramified extension}.
    \end{cases}
    \]
On the other hand,  we will show in Corollary \ref{cor:fPun} that $\fP_v(D, \cE'_v,  \phi_v)$ equals
    \[
    \frac{1-q_v^{-1}}{2(1+q_v^{-1})}\cdot
    \begin{cases}
    1+q_v^{-1}+q_v^{-1}\lambda_v & \text{if $\cE_v'$ is split}, \\
    1+q_v^{-1}-q_v^{-1}\lambda_v & 
    \text{if $\cE_v'/F_v$ is an unramified extension}, \\
    q_v^{-\frac12}\{(1+q_v^{-1})^2-q_v^{-2}\lambda_v^2\} &
    \text{if $\cE_v'/F_v$ is a ramified extension}.
    \end{cases}
    \]
Thus we get
    \begin{align*}
    \sum_{\cE'_v\in X(D_v)}\mathfrak{R}(\cE'_v, \pi_v) 
    &=(1-q_v^{-2})\{(1+q_v^{-2})^2-q_v^{-3}\lambda_v^2\} \\
    &=(1-q_v^{-2})(1-\alpha_v^2q_v^{-2})(1-\alpha_v^{-2}q_v^{-2}) 
    =L(2,\pi_v,\mathrm{Ad})^{-1}.
    \end{align*}
Thus the equality holds in \eqref{eq:202101182}.
\end{proof}

\begin{rem}
There is a different proof of Corollary \ref{cor:nonv3} using the results of \cite{FH}, \cite{Tunnell} and \cite{Saito}.
\end{rem}


\subsection{The local factor $\fP_v(D, \cE_v, \phi_v)$}\label{sec:localdensities}

In this subsection, we fix a place $v\in\Sigma$ and compute the local factor $\fP(D_v, \cE_v, \phi_v)$ for $\cE_v\in X(D_v)$.

First, we introduce a meromorphic function
    \[
    \fP_v(D, \cE_v, \phi_v, s)
    =\frac{|4|_v^{s-2}\alpha_{\cE_v}(\phi_v, \phi_v)}{\zeta_{F_v}(2)L(2s-\tfrac52, \pi_v)}
    \frac{I_{\cE_v}(\Phi_v, \phi_v, 2s-2)}{Z_{\cE_v}(\Phi_v, \phi_v, s)}.
    \]
Note that $\fP_v(D, \cE_v, \phi_v)=\fP_v(D, \cE_v, \phi_v, \tfrac32)$.
We also consider an auxiliary integral
    \[
    \cI_{\cE_v}(\phi_v, s)
    =\zeta_{F_v}(2)^{-1}\int_{k=\left(
        \begin{smallmatrix}
        a&c \\
        b&d
        \end{smallmatrix}\right)\in K_v} |\det(a+b\delta_{\cE_v})|_v^{s-2} 
     \frac{\langle \phi_v, \pi_v(a+b\delta_{\cE_v})\phi_v \rangle_v}
     {\langle \phi_v, \phi_v \rangle_v}\d k.
    \]
When $\cE_v$ is a field, this integral converges absolutely for all $s\in\C$ and defines an entire function.

\begin{lem}\label{lem:fP}
Suppose that $v$ is a finite place.
\begin{itemize}
\item[(1)] The integral for $\cI_{\cE_v}(\phi_v, s)$ converges absolutely for $\Re(s)$ sufficiently large and has meromorphic continuation to the whole complex plane.
\item[(2)] We have
    \[
    \fP_v(D, \cE_v, \phi_v, s)
    =\frac{L(1, \eta_{\cE_v})^2}{2|d_{\cE_v}|_v^{s-2}L(2s-\frac52, \pi_v)} \,
    \cI_{\cE_v}(\phi_v, 2s-2).
    \]
\end{itemize}
\end{lem}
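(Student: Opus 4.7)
My plan is to prove (1) by direct inspection of the integrand on the compact $K_v$, and (2) by evaluating both $I_{\cE_v}$ and $Z_{\cE_v}$ for a test function sharply supported near $x_{\cE_v}=(1,\delta_{\cE_v})$ and cancelling common factors. Combining the $\Phi_v$-independence at $s=\tfrac32$ from Lemma \ref{lem:20200814} with meromorphy in $s$ will then propagate the identity to all admissible $\Phi_v$ and all $s$.

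For (1), when $\cE_v$ is a field, $\det(a+b\delta_{\cE_v})=a^2-d_{\cE_v}b^2$ is a norm from $\cE_v$ and never vanishes for $(a,b)\neq 0$; since the first column $(a,b)$ of every $k\in K_v$ is nonzero, the integrand is continuous and bounded on the compact $K_v$, and $\cI_{\cE_v}(\phi_v,s)$ is entire. When $\cE_v$ is split, the zero locus $\{a^2=d_{\cE_v}b^2\}$ has positive codimension in $K_v$, so the integral converges absolutely for $\Re(s)$ large, and standard non-archimedean local zeta techniques (stratification by the valuation of $a^2-d_{\cE_v}b^2$) yield meromorphic continuation.

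For (2), choose
\[
\Phi_v = \mathbf{1}_{1+\varpi_v^n\fO_v}\otimes \mathbf{1}_{\delta_{\cE_v}+\varpi_v^n\fO_v}
\]
for $n$ large, whose support is a small neighborhood of $x_{\cE_v}$ contained in $V_{\cE_v}(F_v)$ and which satisfies \eqref{eq:cond} and \eqref{eq:cond2}. Using continuity of $\alpha_{\cE_v}(\pi_v(g_1)\phi_v,\pi_v(g_2)\phi_v)$ in the orbit parameters together with $P(x_{\cE_v})=4d_{\cE_v}$ (coming from $\delta_{\cE_v}^\iota=-\delta_{\cE_v}$), one obtains the asymptotic $Z_{\cE_v}(\Phi_v,\phi_v,s)\sim \frac{2c_v\,\alpha_{\cE_v}(\phi_v,\phi_v)\,|4d_{\cE_v}|_v^{s-2}}{L(1,\eta_{\cE_v})^2}\,\vol(\Phi_v)$ as $n\to\infty$. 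For $I_{\cE_v}$, I swap the $K_v$- and $V$-integrations and perform the linear change of variable $(u_1,u_2)=(x,y)k$, whose Jacobian is $|\det k|_v^4=1$ on $K_v$; writing $k=\begin{psmallmatrix}a&c\\ b&d\end{psmallmatrix}$, the inverse gives $x=(\det k)^{-1}(d\,u_1 - b\,u_2)$. Concentrating $(u_1,u_2)$ at $(1,\delta_{\cE_v})$ and using that $\pi_v$ has trivial central character to absorb the scalar $(\det k)^{-1}$ into both the matrix coefficient and the reduced norm, the resulting $K_v$-integral becomes
\[
\int_{K_v}|\det(d-b\delta_{\cE_v})|_v^{s-2}\,\frac{\langle\phi_v,\pi_v(d-b\delta_{\cE_v})\phi_v\rangle_v}{\langle\phi_v,\phi_v\rangle_v}\,\d k,
\]
and the substitution $k\mapsto k^{-1}$ (which preserves $\d k$) identifies it with $\zeta_{F_v}(2)\,\cI_{\cE_v}(\phi_v,s)$; hence $I_{\cE_v}(\Phi_v,\phi_v,s)\sim c_v\,\zeta_{F_v}(2)\,\cI_{\cE_v}(\phi_v,s)\,\vol(\Phi_v)$.

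Substituting these asymptotics into $\fP_v(D,\cE_v,\phi_v,s)$ with the proper shift of arguments ($s$ for $Z$, $2s-2$ for $I$), the factors $\vol(\Phi_v)$, $\alpha_{\cE_v}(\phi_v,\phi_v)$, and $\zeta_{F_v}(2)$ cancel, and $|4|_v^{s-2}$ cancels against a factor in $|4d_{\cE_v}|_v^{s-2}$, giving the claimed formula for this $\Phi_v$. The main technical obstacle will be justifying the concentration limit when $\cE_v$ is split, where the $K_v$-integrand has a lower-dimensional singular locus: uniform integrability for $\Re(s)$ large follows from boundedness of the normalized matrix coefficient of the unitary $\pi_v$, and the identity then extends by meromorphic continuation. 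Lemma \ref{lem:20200814} gives $\Phi_v$-independence at $s=\tfrac32$, which combined with meromorphy of both sides in $s$ yields the identity for all admissible $\Phi_v$ and all $s\in\C$.
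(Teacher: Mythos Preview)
Your strategy is workable but takes a harder path than necessary, and the final sentence contains a genuine logical error.

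The paper's key simplification is the choice of test function. Instead of the box $\mathbf{1}_{1+\varpi_v^n\fO_v}\otimes\mathbf{1}_{\delta_{\cE_v}+\varpi_v^n\fO_v}$, take $\Phi_v$ to be the characteristic function of the \emph{orbit} neighborhood $U_v=x_{\cE_v}\cdot\rho(K_v'\times K_v'\times K_v)$, where $K_v'\subset D_v^\times$ is a compact open subgroup small enough to fix $\phi_v$ (this exists because $\pi_v$ is smooth). Every point of $U_v$ has the form $x_{\cE_v}\cdot\rho(g_1,g_2,k)$ with $g_i\in K_v'$, so $\alpha_{\cE_v}(\pi_v(g_1)\phi_v,\pi_v(g_2)\phi_v)=\alpha_{\cE_v}(\phi_v,\phi_v)$ and $|P(x)|_v=|4d_{\cE_v}|_v$ \emph{exactly}, not merely asymptotically; this gives a closed formula for $Z_{\cE_v}(\Phi_v,\phi_v,s)$ with no limit. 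Likewise $U_v$ is $\rho(1,1,K_v)$-invariant, so $\Phi_{v,K_v}=\Phi_v$, and the first coordinate of a point of $U_v$ is $g_1^{-1}(a+b\delta_{\cE_v})g_2$; since $K_v'$ fixes $\phi_v$ and consists of reduced-norm units, one gets $I_{\cE_v}(\Phi_v,\phi_v,s)=c_v\,\zeta_{F_v}(2)\,\vol(U_v)\,\cI_{\cE_v}(\phi_v,s)$ exactly. This last identity also proves (1) in one stroke, since $I_{\cE_v}$ is a Godement--Jacquet integral. By contrast, with your box $\Phi_v$ the error $d(u_1-1)-b(u_2-\delta_{\cE_v})\in\varpi_v^n\fO_v$ is not uniformly small relative to $d-b\delta_{\cE_v}$ as $k$ ranges over $K_v$ in the split case (the reduced norm of the latter can have arbitrarily large valuation), so a genuine limiting argument with dominated convergence for $\Re(s)$ large is unavoidable; your ``uniform integrability'' sketch is plausible but needs more than you have written.

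Your last sentence is simply wrong: two meromorphic functions agreeing at the single point $s=\tfrac32$ need not agree anywhere else, so Lemma~\ref{lem:20200814} together with meromorphy cannot yield $\Phi_v$-independence of $\fP_v(D,\cE_v,\phi_v,s)$ for general $s$. Fortunately this step is unnecessary. The lemma is proved by exhibiting \emph{one} $\Phi_v$ satisfying \eqref{eq:cond} and \eqref{eq:cond2} for which the ratio computes to the stated $\Phi_v$-free expression; that is all the statement asserts, and $\Phi_v$-independence for general $s$ is then a consequence of the formula rather than an input to it.
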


\begin{proof}
(1) Let $\pr_1\,\colon V(F_v)\rightarrow D_v$ be the projection onto the first coordinate.
Take a sufficiently small open compact subgroup $K_v'$ of $D_v^\times$ so that $K'_v$ fixes $\phi_v$ and $U_v=x_{\cE_v}\cdot\rho(K_v'\times K_v'\times K_v)$ is an open compact neighborhood of $x_{\cE_v}$ contained in $V_{\cE_v}(F_v)$.
Note that $\pr_1(U_v)\subset D_v^\times$.
Let $\Phi_v$ be the characteristic function of $U_v$.
Since $V(F_v)\setminus (D_v^\times\times D_v)$ is of measure $0$,  the domain of integration for the defining integral of $I_{\cE_v}(\Phi_v, \phi_v, s)$ can be replaced with $V(F_v)$. 
Hence, by change of variable for $(x,y)\in V(F_v)$ we have
    \begin{align*}
    & \langle \phi_v, \phi_v \rangle_v \cdot I_{\cE_v}(\Phi_v, \phi_v, s) \\
    &= c_v\int_{K_v}\int_{V(F_v)} \Phi_v(x,y)\, |\det(\mathrm{pr}_1((x,y)\cdot k))|_v^{s-2}    \langle \phi_v, \pi_v(\mathrm{pr}_1((x,y)\cdot k))\phi_v \rangle_v   \d x \d y \d k\\
    &= c_v \int_{K_v}\int_{U_v}  |\det(\mathrm{pr}_1(x_{\cE_v}\cdot k))|_v^{s-2}  \langle \phi_v, \pi_v(\mathrm{pr}_1(x_{\cE_v}\cdot k))\phi_v \rangle_v  \d x \d y \d k\\
    &= c_v \, \vol(U_v)  \int_{k=\left(\begin{smallmatrix}a&c \\ b&d \end{smallmatrix}\right)\in K_v} |\det(a+b\delta_{\cE_v})|_v^{s-2}  \langle \phi_v, \pi_v(a+b\delta_{\cE_v})\phi_v \rangle_v  \d k\\
    &= c_v \, \zeta_{F_v}(2)\, \vol(U_v)\,  \langle \phi_v, \phi_v \rangle_v \cdot \cI_{\cE_v}(\phi_v, s),
    \end{align*}
where we abbreviate $(x,y)\cdot \rho(1,1,k)$ as $(x,y)\cdot k$. 
In addition,  the absolute convergence and meromorphic continuation follow from those for the Godement-Jacquet integral $I_{\cE_v}(\Phi_v, \phi_v, s)$.

(2) Let $\Phi_v$ be as above.
This is an immediate consequence of 
    \[
    Z_{\cE_v}(\Phi_v, \phi_v, s)
    =\frac{2c_v}{L(1, \eta_{\cE_v})^2}
    \, \vol(U_v)|P(x_{\cE_v})|_v^{s-2}\alpha_{\cE_v}(\phi_v, \phi_v)
    \]
and $P(x_{\cE_v})=4d_{\cE_v}$.
\end{proof}

\begin{lem}\label{lem:omg}
Suppose that $v$ is a finite place.
The integral for $\cI_{\cE_v}(\phi_v, s)$ converges absolutely and uniformly for $\Re(s)\geq 1$ and satisfies
    \[
    \cI_{\cE_v}(\phi_v, 1)
    =c_v^{-1} L(1, \eta_{\cE_v})^{-1} \,
    \frac{\alpha_{\cE_v}(\phi_v, \phi_v)}{\langle \phi_v, \phi_v \rangle_v}.
    \]
\end{lem}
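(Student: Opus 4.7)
The plan is to reduce the integral over $K_v$ to a Haar integral over $\cE_v^\times$ and then to unfold the central $F_v^\times$ action so as to recognize the local toric period. The crucial first step uses the ``first column'' map $\pi_1\,\colon K_v \to \fo_v^2$, $k \mapsto (k_{11}, k_{21})$. Its image is the set $S = \fo_v^2 \setminus \varpi_v\fo_v^2$ of primitive vectors, and the pushforward of the normalized Haar measure $\d k$ is the unique $K_v$-invariant probability measure on $S$, namely $\zeta_{F_v}(2)\,\d a\,\d b$ (since $\vol_{\mathrm{Leb}}(S) = 1 - q_v^{-2}$). Because the integrand of $\cI_{\cE_v}(\phi_v, s)$ depends only on the first column, this gives
\[
\cI_{\cE_v}(\phi_v, s) = \int_S |a^2 - d_{\cE_v}b^2|_v^{s-2}\, \frac{\langle\phi_v, \pi_v(a + b\delta_{\cE_v})\phi_v\rangle_v}{\langle\phi_v, \phi_v\rangle_v}\, \d a\, \d b,
\]
using that $\Tr(\delta_{\cE_v}) = 0$ and $\delta_{\cE_v}^2 = d_{\cE_v}$ yields $\det(a+b\delta_{\cE_v}) = a^2 - d_{\cE_v}b^2$.

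Next, I would apply the definition $\d h_{\cE_v} = c_v L(1, \eta_{\cE_v})\frac{\d a\, \d b}{|a^2 - d_{\cE_v}b^2|_v}$ to convert Lebesgue measure to the Haar measure on $\cE_v^\times$; the singular locus where $a^2 - d_{\cE_v}b^2$ vanishes has Lebesgue measure zero and may be ignored. Setting $h = a + b\delta_{\cE_v}$ and $R = \fo_v + \fo_v\delta_{\cE_v}$ (so that $S$ corresponds to $R \setminus \varpi_v R \cap \cE_v^\times$), one obtains
\[
\cI_{\cE_v}(\phi_v, s) = \frac{1}{c_v L(1, \eta_{\cE_v})}\int_{R \setminus \varpi_v R} |\det(h)|_v^{s-1}\, \frac{\langle\phi_v, \pi_v(h)\phi_v\rangle_v}{\langle\phi_v, \phi_v\rangle_v}\, \d h_{\cE_v}.
\]
At $s = 1$ the $|\det|^{s-1}$ factor is trivial, so what remains is to compute the integral of the normalized matrix coefficient over $R \setminus \varpi_v R$. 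For this, I would observe that every $h \in \cE_v^\times$ has a unique $k \in \Z$ with $\varpi_v^k h \in R \setminus \varpi_v R$ (normalizing by the minimum of the $v$-valuations of its $(a,b)$-coordinates), so $R \setminus \varpi_v R$ is a fundamental domain for $\varpi_v^\Z$ acting on $\cE_v^\times$ and is stable under $\fo_v^\times$. Combining this with $\pi_v$'s trivial central character (so the matrix coefficient is $F_v^\times$-invariant) and $\vol(\fo_v^\times, \d^\times z_v) = c_v(1 - q_v^{-1}) = 1$, unfolding against $\d h_{\cE_v} = \d^\times z_v \cdot \d h_v$ yields
\[
\int_{R \setminus \varpi_v R} \langle \phi_v, \pi_v(h)\phi_v\rangle_v\, \d h_{\cE_v} = \int_{F_v^\times \bs \cE_v^\times}\langle\phi_v, \pi_v(h)\phi_v\rangle_v\, \d h_v = \alpha_{\cE_v}(\phi_v, \phi_v),
\]
which gives exactly the claimed identity.

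For convergence on $\{\Re(s) \geq 1\}$, I would bound the integrand pointwise: on $R$ we have $|\det(h)|_v \leq 1$, so $|\det(h)|_v^{\Re(s) - 1} \leq 1$ whenever $\Re(s) \geq 1$, and the integrand is dominated by a constant multiple of $|\langle\phi_v, \pi_v(h)\phi_v\rangle_v|$. The absolute convergence of the local toric period $\alpha_{\cE_v}(\phi_v, \phi_v)$ then gives an integrable majorant independent of $s$, and dominated convergence yields uniform convergence on compact subsets. The main obstacle is the verification of the fundamental-domain property in the split case $\cE_v \cong F_v \times F_v$, where one must check that the normalization argument by valuations goes through cleanly (one cannot just use a compact fundamental domain) and that the measure-zero singular locus of $\det$ creates no genuine integrability issue beyond what is already handled by the absolute convergence of $\alpha_{\cE_v}$.
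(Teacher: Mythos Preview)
Your proposal is correct and follows essentially the same route as the paper: both reduce $\cI_{\cE_v}(\phi_v,s)$ to an integral over the primitive vectors $(\fo_v\times\fo_v)\setminus(\varpi_v\fo_v\times\varpi_v\fo_v)$, convert the Lebesgue measure to the Haar measure $\d h_{\cE_v}$ on $\cE_v^\times$, and unfold against the central $F_v^\times$-action to recover $\alpha_{\cE_v}(\phi_v,\phi_v)$, with convergence supplied by Waldspurger's absolute convergence of the local toric integral. The only cosmetic difference is that the paper carries out the unfolding by hand (splitting the domain into $\fo_v\times\fo_v^\times$ and $\fo_v^\times\times\varpi_v\fo_v$ and making explicit substitutions), whereas you package the same computation more cleanly as the statement that $R\setminus\varpi_v R$ is a fundamental domain for $\varpi_v^{\Z}$ together with $\vol(\fo_v^\times,\d^\times z_v)=1$; one minor caveat is that your bound $|\det(h)|_v\le 1$ on $R$ tacitly assumes $d_{\cE_v}\in\fo_v$, but since $\det$ is bounded on the compact set $R$ in any case the dominated-convergence argument goes through unchanged.
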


\begin{proof}
Let $ U_v$ and $\Phi_v$ be as in the proof of Lemma \ref{lem:fP}. 
Note that we have
\[
\int_{k=\left(\begin{smallmatrix}a&c \\ b&d \end{smallmatrix}\right)\in K_v} f(a,b) \d k = \zeta_{F_v}(2) \int_{(\fo_v\times\fo_v)\setminus(\varpi_v\fo_v\times\varpi_v\fo_v)} f(a,b) \d a \d b 
\]
for $f\in\Cc(F_v\times F_v)$, where $\d a$ and $\d b$ are Haar measures on $F_v$ normalized so that $\vol(\fo_v)=1$.
Formally we have
    \begin{align*}
    &\hspace{-25pt}c_v\int_{V(F_v)}\left|\Phi_{v, K_v}(x, y)|\det(x)|_v^{-1}
    \frac{\langle \phi_v, \pi_v(x)\phi_v\rangle_v}{\langle \phi_v, \phi_v \rangle_v}
    \right| \d x \d y \\
    &=c_v\zeta_{F_v}(2)\vol(U_v)
    \int_{(\fo_v\times\fo_v)\setminus(\varpi_v\fo_v\times\varpi_v\fo_v)}
    \frac{|\langle \phi_v, \pi_v(a+b\delta_{\cE_v})\phi_v\rangle_v|}
    {\langle \phi_v, \phi_v\rangle_v|\det(a+b\delta_{\cE_v})|_v}\d a \d b \\
    &=c_v\zeta_{F_v}(2)\vol(U_v)\left(\int_{\fo_v\times\fo_v^\times}
    +\int_{\fo_v^\times\times\varpi_v\fo_v}\right)
    \frac{|\langle \phi_v, \pi_v(a+b\delta_{\cE_v})\phi_v\rangle_v|}
    {\langle \phi_v, \phi_v\rangle_v|a^2-b^2d_{\cE_v}|_v}\d a \d b.
    \end{align*}
Since the central character of $\pi_v$ is trivial, the integral over $\fo_v\times\fo_v^\times$ equals
    \[
    \vol(\fo_v^\times)\int_{\fo_v}
    \frac{|\langle \phi_v, \pi_v(a+\delta_{\cE_v})\phi_v\rangle_v|}
    {\langle \phi_v, \phi_v\rangle_v|a^2-d_{\cE_v}|_v}\d a.
    \]
Making the substitution $c=b^{-1}$, the integral over $\fo_v^\times\times\varpi_v\fo_v$ becomes
\begin{multline*}
    \vol(\fo_v^\times)\sum_{l=1}^\infty\int_{\varpi_v^l\fo_v^\times}
    \frac{|\langle \phi_v, \pi_v(1+b\delta_{\cE_v})\phi_v\rangle_v|}
    {\langle \phi_v, \phi_v\rangle_v|1-b^2d_{\cE_v}|_v}\d b  \\
    =\vol(\fo_v^\times)\sum_{l=1}^\infty\int_{\varpi_v^{-l}\fo_v^\times}
    \frac{|\langle \phi_v, \pi_v(c+\delta_{\cE_v})\phi_v\rangle_v|}
    {\langle \phi_v, \phi_v\rangle_v|c^2-d_{\cE_v}|_v}\d c.    
\end{multline*}
Hence, using
\[
c_v \, L(1,\eta_{\cE_v}) \, \vol(\fo_v^\times) \, \int_{F_v^\times} f(a+\delta_{\cE_v}) \frac{\d a}{|a^2-\delta_{\cE_v}|_v}=\int_{F_v^\times\bs \cE_v^\times}f(h_v)\, \d h_v 
\]
($f$ is a test funciton on $F_v^\times\bs \cE_v^\times$), at least formally we have
    \begin{align*}
    &\hspace{-45pt}c_v\int_{V(F_v)}\left|\Phi_{v, K_v}(x, y)|\det(x)|_v^{-1}
    \frac{\langle \phi_v, \pi_v(x)\phi_v\rangle_v}{\langle \phi_v, \phi_v \rangle_v}
    \right| \d x \d y \\
    &=\zeta_{F_v}( 2)L(1, \eta_{\cE_v})^{-1}\vol(U_v)\int_{F_v^\times\bs\cE_v^\times}
    \frac{|\langle \phi_v, \pi_v(h_v)\phi_v\rangle_v|}{\langle \phi_v, \phi_v\rangle_v}
    \d h_v.
    \end{align*}
Since the right hand side converges by \cite{Wal2}*{Lemmas 2, 3}, the integral for $I_{\cE_v}(\Phi_v, \phi_v, s)$ converges absolutely at $s=1$.

Let $U_1$ be the set of $(x, y)\in V(F_v)$ such that $|\det(x)|_v<1$. 
Suppose $\Re(s)\ge 1$, that is, $\Re(s)-2\ge -1$. 
Then, we have $|\det(x)|_v^{\Re(s)-2}\leq |\det(x)|_v^{-1}$ for $(x, y)\in U_1$. 
Hence the convergence of $I_{\cE_v}(\Phi_v, \phi_v, s)$ follows from that of $I_{\cE_v}(\Phi_v, \phi_v, 1)$.
Therefore the integral for $I_{\cE_v}(\Phi_v, \phi_v, s)$ converges absolutely and uniformly for $\Re(s)\geq1$, and so is $ \cI_{\cE_v}(\phi_v, s)$.
Now all the formal manipulations are justified and we obtain the desired equality.
\end{proof}

Next we consider the archimedean case.  
First,  we prepare some general lemmas about norms on real vector spaces.

\begin{lem}\label{lem:ineq1}
Let $\|\cdot\|$ be the norm on $\R^n$ given by $\| x\|=(\t xx)^\frac12$ for $x\in\R^n$,  where elements in $\R^n$ are regarded as column vectors.
We consider the natural left action of $\GL_n(\R)$ on $\R^n$.
For a compact subset $C\subset \GL_n(\R)$,  we have
    \[
    \| g\cdot x \| \ll \| x \|,  \qquad g\in C,  \ x\in\R^n.
    \]
\end{lem}

\begin{proof}
Let $\Sym_n(\R)$ be the set of real symmetric matrices of size $n$ and $\Omega$ be the subset of positive definite matrices.
For $x,  y\in \Sym_n(\R)$,  we write $y<x$ if $x-y$ is positive definite.

We have a surjective map $h\,\colon \GL_n(\R)\to\Omega$ given by $h(g)=\t gg$ for $g\in\GL_n(\R)$.
Since it is continuous,  $h(C)$ is a compact subset of $\Omega$.
For $g\in C$,  let $\alpha_1(g),  \ldots,  \alpha_n(g)>0$ be the eigenvalues of $h(g)$.
Since $\Tr(h(C))$ is a compact subset of $\R_{>0}$,  there is $c\in\R_{>0}$ such that $\Tr(h(g))<c$ for all $g\in C$.
In particular,  we have $\alpha_j(g)<c$ for all $j$ and hence $h(g)<cI_n$ for all $g\in C$.

Thus for $x\in\R^n$,  
    \[
    \| g\cdot x \|^2=\t x\t ggx=\t x\, h(g)\, x < c\t x x= c \| x\|^2.  \qedhere
    \]
\end{proof}

\begin{lem}\label{lem:ineq2}
Let $\|\cdot\|'$ be the norm on $\GL_2(\R)$ given by $\| g\|'=|\det(g)^{-1}\Tr(g\t g)|^\frac12$ for $g\in\GL_2(\R)$.
For a compact subset $C\subset \GL_2(\R)$,  we have
    \[
    \| gx \|' \ll \| x \|',  \qquad g\in C,  \ x\in\GL_2(\R).
    \]
\end{lem}

\begin{proof}
We define the action of $\GL_2(\R)$ on $\Sym_2(\R)$ by $X\cdot g=(\det(g))^{-1}\t gXg$ for $g\in\GL_2(\R)$ and $X\in\Sym_2(\R)$.
Note that this action factors through that of $\PGL_2(\R)$ and gives rise to a continuous homomorphism
    \[
    f\,\colon \GL_2(\R)\to \GL(\Sym_2(\R))\simeq\GL_3(\R).
    \]
Let $\|\cdot\|$ denote the norm on $\M_3(\R)$ given by $\|x\|=\Tr(x\t x)^\frac12$ for $x\in\M_3(\R)$.
Note that this norm is equivalent to the one considered in Lemma \ref{lem:ineq1}.

By the Cartan decomposition, every element $g\in\GL_2(\R)$ can be written in the form $g=k_1\diag(ab, b)k_2$ with $k_1,  k_2\in\O(2)$, $a>0$, and $b>0$.
Then $\| g \|'=\|\diag(a,  1)\|'=(a+a^{-1})^\frac12$ and $\|f(g)\|=\|\diag(a,  1,  a^{-1})\|=(a^2+1+a^{-2})^\frac12$.
Hence we have $\| g \|' \asymp \| f(g) \|^\frac12$ for $g\in\GL_2(\R)$.

From this,  we get $\|gx\|' \ll \|f(gx)\|^\frac12$ and $\|f(x)\|^\frac12 \ll \|x\|'$ for $g,  x\in\GL_2(\R)$.
Since we have $\|f(gx)\| = \|f(g)f(x)\| \ll \|f(x)\|$ for $g\in C$ and $x\in\GL_2(\R)$ from Lemma \ref{lem:ineq1},  the assertion follows.
\end{proof}

We define a norm $\|\cdot\|$ on $V(F_v)$ by
    \[
    \|z\|=\Tr(xx^\ast+yy^\ast)^\frac12, \hspace{25pt} z=(x, y)\in V(F_v),
    \]
where 
    \[
    x^\ast=
        \begin{cases}
        \t x & \text{if $D_v=\M_2(\R)$}, \\ 
        \t\bar{x} & \text{if $D_v=\M_2(\C)$}, \\
         x^\iota & \text{if $D_v$ is the quaternion division algebra}.
         \end{cases} 
    \] 
Take $\varepsilon>0$ and let $\psi_\varepsilon\in\Cc(V(F_v))$ be a non-negative $K_v$-invariant function such that 
    \[
    \int_{V(F_v)}\psi_\varepsilon(x)\d x=\varepsilon^8
    \]
and supported on $B_\varepsilon=\{x\in V(F_v) \mid \| x\|<\varepsilon\}$.
Let $\Phi_\varepsilon(x)=\psi_\varepsilon(x-x_{\cE_v})$.

\begin{lem}\label{lem:convarch}
Suppose that $v$ is an archimedean place.
\begin{itemize}
\item[(1)] 
For a $K_v$-finite vector $\phi_v\in\pi_v$, the integral for $I_{\cE_v}(\Phi_v, \phi_v, s)$ converges absolutely and uniformly on compact sets for $\Re(s)\geq1$.

\item[(2)] Suppose that $\varepsilon>0$ is sufficiently small.
For $\Phi_v=\Phi_\varepsilon$, we have 
    \[
    \lim_{\varepsilon\to0}\ \varepsilon^{-8} I_{\cE_v}(\Phi_v, \phi_v, 1)
    =\zeta_{F_v}(2)\, \cI_{\cE_v}(\phi_v, 1).
    \]

\item[(3)]  We have
    \[
    \fP_v(D, \cE_v, \phi_v)
    =\frac{L(1, \eta_{\cE_v})^2|d_{\cE_v}|_v^\frac12}{2L(\frac12, \pi_v)}
    \cI_{\cE_v}(\phi_v, 1).
    \]
\end{itemize}
\end{lem}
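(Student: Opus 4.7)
My plan is to handle the three parts in sequence, the main observation being that after integrating out $y$ the integral $I_{\cE_v}(\Phi_v, \phi_v, s)$ takes the form of a local Godement-Jacquet zeta integral attached to a matrix coefficient of a $K_v$-finite vector in $\pi_v$. Setting $\Psi(x) = \int_{D_v} \Phi_{v, K_v}(x, y)\, dy$ and converting to the multiplicative Haar measure via $dx = |\det(x)|_v^2\, d^\times x$, I can rewrite
\[
I_{\cE_v}(\Phi_v, \phi_v, s) = c_v \int_{D_v^\times} \Psi(x)\, \frac{\langle \phi_v, \pi_v(x) \phi_v \rangle_v}{\langle \phi_v, \phi_v \rangle_v}\, |\det(x)|_v^s\, d^\times x,
\]
and the assertion (1) then reduces to the standard archimedean convergence of the Godement-Jacquet integral for a $K_v$-finite vector in an irreducible unitary generic representation of $\GL_2(F_v)$, which holds for $\Re(s) \geq 1$.

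For (2), the key device is the $K_v$-invariance of $\psi_\varepsilon$, which allows the averaged test function to be written as
\[
\Phi_{\varepsilon, K_v}(v) = \int_{K_v} \psi_\varepsilon\bigl(v - x_{\cE_v} \cdot \rho(1, 1, k)\bigr)\, dk.
\]
Substituting this into $I_{\cE_v}(\Phi_\varepsilon, \phi_v, 1)$, exchanging the order of integration and translating the inner variable by $x_{\cE_v} \cdot \rho(1, 1, k)$, I obtain
\[
I_{\cE_v}(\Phi_\varepsilon, \phi_v, 1) = c_v \int_{K_v} \int_{V(F_v)} \psi_\varepsilon(v)\, |\det(x + u_k)|_v^{-1}\, \frac{\langle \phi_v, \pi_v(x + u_k) \phi_v \rangle_v}{\langle \phi_v, \phi_v \rangle_v}\, dv\, dk,
\]
where $u_k$ denotes the first component of $x_{\cE_v} \cdot \rho(1, 1, k)$ and $v = (x, y)$. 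Since the factor trailing $\psi_\varepsilon$ is continuous in $x$ at $x = 0$ for each $k$ with $\det(u_k) \neq 0$, and $\int_V \psi_\varepsilon = \varepsilon^8$, dominated convergence as $\varepsilon \to 0$ yields
\[
\varepsilon^{-8}\, I_{\cE_v}(\Phi_\varepsilon, \phi_v, 1) \longrightarrow c_v \int_{K_v} |\det(u_k)|_v^{-1}\, \frac{\langle \phi_v, \pi_v(u_k) \phi_v \rangle_v}{\langle \phi_v, \phi_v \rangle_v}\, dk = c_v\, \zeta_{F_v}(2)\, \cI_{\cE_v}(\phi_v, 1),
\]
where the second equality uses an explicit parametrization of $K_v$ matching its first column with $(a, b)$ in the definition of $\cI_{\cE_v}$, together with $c_v = 1$ at archimedean $v$.

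Part (3) follows by applying the same bump-concentration argument to $Z_{\cE_v}(\Phi_\varepsilon, \phi_v, 3/2)$. Since $x_{\cE_v}$ is a regular point lying in the open orbit $V_{\cE_v}(F_v)$ with $P(x_{\cE_v}) = 4 d_{\cE_v}$, a small enough neighborhood of $x_{\cE_v}$ is contained in $V_{\cE_v}(F_v)$ and the orbital integrand is continuous there with $g_1, g_2 \to 1$, so
\[
\varepsilon^{-8}\, Z_{\cE_v}(\Phi_\varepsilon, \phi_v, 3/2) \longrightarrow \frac{2\, \alpha_{\cE_v}(\phi_v, \phi_v)\, |4 d_{\cE_v}|_v^{-1/2}}{L(1, \eta_{\cE_v})^2}.
\]
Inserting this limit together with the one from (2) into the definition of $\fP_v(D, \cE_v, \phi_v)$ and simplifying using $|4|_v^{1/2} = |2|_v$ at archimedean $v$ gives the claimed formula. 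The main obstacle I anticipate is justifying dominated convergence in (2) at the boundary $\Re(s) = 1$: at points $k \in K_v$ where $\det(u_k)$ vanishes (which occurs precisely when $\cE_v$ is split at $v$), the pointwise limit of the integrand develops a singularity, and controlling its growth uniformly in $\varepsilon$ requires the $K_v$-finiteness of $\phi_v$ together with Godement-Jacquet-type bounds on the matrix coefficient, which is the same input that powers (1).
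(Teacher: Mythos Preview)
Your approach for (2) and (3) is essentially identical to the paper's: both exploit the $K_v$-invariance of $\psi_\varepsilon$ to rewrite $I_{\cE_v}(\Phi_\varepsilon,\phi_v,1)$ as a double integral over $K_v\times V(F_v)$ (the paper's $f(x,k)$ is exactly your integrand after translation), then combine the approximation-to-identity property of $\varepsilon^{-8}\psi_\varepsilon$ with dominated convergence in $k$, and both obtain (3) by concentrating $Z_{\cE_v}$ at the base point $x_{\cE_v}$ via a first-order expansion. The only difference is in (1): you invoke convergence of the Godement--Jacquet integral for $\Re(s)\geq 1$ as a black box, whereas the paper supplies this step explicitly by passing to Cartan coordinates $x=k_1\,\diag(a,b)\,k_2$, using $K_v$-finiteness of $\phi_v$ to reduce to finitely many scalar integrals, and invoking the matrix-coefficient decay $|\langle\phi_{v,j},\pi_v(\diag(c,1))\phi_{v,j}\rangle_v|\ll c^{-\delta}$ for some $\delta>0$; since this decay is exactly the input you yourself flag in your final paragraph as needed to justify dominated convergence at the boundary, the substance is the same and only the level of detail differs.
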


\begin{proof}
(1) 
Suppose that $D_v$ is not split, that is, $\det$ is positive definite on $D_v$.
Set $D_v^1\coloneqq\{x\in D_v \mid \det(x)=1 \}$. 
Consider the polar decomposition $x=r \, x_1$ for $x\in D_v^\times$, $r\in\R_{>0}$, $x_1\in D_v^1$, and choose a measure $\d x_1$ on $D_v^1$ so that $\d x=r^3\, \d r \, \d^1 x$. 
Then 
\[
I_{\cE_v}(\Phi_v, \phi_v, s)=\int_{\R_{>0}} r^{2s} \, \tilde\Phi(r) \, \frac{\d r}{r} ,
\]
\[
\tilde\Phi(r)=c_v\int_{D_v^1} \int_{D_v} \Phi_{v,K_v}(rx_1,y) \frac{\langle \phi_v,\pi_v(x_1)\phi_v\rangle}{\langle \phi_v,\phi_v\rangle}\, \d x_1 \, \d y . 
\]
The function $\tilde\Phi$ belongs to $\cS(\R)$, because $D_v^1$ is compact.  
Therefore, we obtain the assertion in this case. 

Next, we consider the case where $D_v$ is split. 
By the same argument as in the proof of Lemma \ref{lem:omg},  it suffices to show the absolute convergence at $s=1$.
By the Cartan decomposition,  any $g\in\GL_2(F_v)$ can be written in the form $g=k_1\diag(a, b)k_2$ with some $k_1, k_2\in K_v$ and $a>b>0$.
Then,  a Haar measure $\d g$ on $\GL_2(F_v)$ is given by
\[
|\det(g)|_v^2\, \d g=C'(a^2-b^2)^{\dim_\R F_v}(ab)^{\dim_\R F_v-1}\d a \d b\d k_1\d k_2
\]
with some constant $C'>0$. 
See \cite{Helgason}*{Theorem 5.8 in p.186} for details.
On the open dense subset $\GL_2(F_v)$ of $M_2(F_v)$, a Haar measure $\d x$ on $M_2(F_v)$ equals $|\det(g)|_v^2\, \d g$ up to a constant, since the complement $M_2(F_v)\setminus \GL_2(F_v)$ is of measure $0$ with respect to $\d x$. 
Hence there is $C>0$ such that
    \begin{align*}
    &\hspace{-8pt}\int_{V(F_v)}\left|\Phi_{v, K_v}(x, y)|\det(x)|_v^{-1}
    \frac{\langle \phi_v, \pi_v(x)\phi_v \rangle_v}{\langle \phi_v, \phi_v\rangle_v}\right| 
    \d x\d y  \\
    &=C\int_{a>b>0}\d a \d b \int_{K_v}\d k_1 \int_{K_v}\d k_2 \int_{\M_2(F_v)}\d y \\
    &\hspace{40pt}(ab)^{-1}(a^2-b^2)^{\dim_\R F_v}|\Phi_{v, K_v}(k_1\diag(a, b)k_2, y)|\left|
        \frac{\langle \phi_v, \pi_v(k_1\diag(a, b)k_2)\phi_v \rangle_v}
        {\langle \phi_v, \phi_v\rangle_v}\right|. 
    \end{align*}
Since $\phi_v$ is $K_v$-finite and $\Phi_{v, K_v}$ is compactly supported, there are $K_v$-finite vectors $\phi_{v, j}\in\pi_v$ and $\Psi_j\in\Cc(\R\times\R)$, $j=1, 2,\ldots, l$ such that the above integral is bounded by
    \[
    C\sum_{j=1}^l \int_{a>b>0}(a^2-b^2)^{\dim_\R F_v}\Psi_j(a, b)\left|
    \frac{\langle \phi_{v, j}, \pi_v(\diag(a, b))\phi_{v, j} \rangle_v}{\langle \phi_v, \phi_v\rangle_v}\right|
    \frac{\d a}{a} \frac{\d b}{b}.
    \]
Making the substitution $c=ab^{-1}$, this becomes
    \[
    C\sum_{j=1}^l \int_{c>1}\int_{b>0}(b^2c^2-b^2)^{\dim_\R F_v}\Psi_j(bc, b)\left|
    \frac{\langle \phi_{v, j}, \pi_v(\diag(c, 1))\phi_{v, j} \rangle_v}
    {\langle \phi_v, \phi_v\rangle_v}\right|
    \frac{\d c}{c} \frac{\d b}{b}.
    \]
Using
    \begin{equation}\label{eq:mcest}
    |\langle \phi_1, \pi_v(\diag(c, 1))\phi_2 \rangle_v|\ll c^{-\kappa},  \qquad
    \phi_1,  \phi_2\in\pi_v
    \end{equation}
for some $\kappa>0$, we see that the above integral is bounded.
If $\pi_v$ is a discrete series representation, \eqref{eq:mcest} is well-known.
If $\pi_v$ is a unitary principal series representation, \eqref{eq:mcest} follows from \cite{Knapp}*{Proposition 7.14, 7.15}.
This completes the proof.

\noindent

(2) 
Let $\pr_1\,\colon V(F_v)\rightarrow D_v$ denote the projection onto the first coordinate. 
Set $U=\{ (x,k)\in V(F_v)\times K_v \mid \det(\pr_1((x+x_{\cE_v})\cdot\rho(1, 1, k)))\neq 0\}$. 
Then, $U$ is open dense in $V(F_v)\times K_v$. 
Assume that $\varepsilon$ is sufficiently small so that $\Phi_v=\Phi_\varepsilon$ is supported on the set of $x \in V(F_v)$ such that $(x,  k)\in U $ for some $k\in K$. 
For $(x,k)\in U$, we set
    \[
    f(x, k)=\frac{\langle \phi_v, \pi_v(\pr_1((x+x_{\cE_v})
    \cdot\rho(1, 1, k)))\phi_v\rangle_v}
    {\langle \phi_v, \phi_v \rangle_v |\det(\pr_1((x+x_{\cE_v})\cdot\rho(1, 1, k)))|_v}.
    \]    
Changing the variables,  we obtain
    \begin{equation}\label{eq:fxk}
    \int_{V(F_v)} \int_{K_v} \psi_\varepsilon(x)\, f(x, k) \, \d k \, \d x =I_{\cE_v}(\Phi_v,  \phi_v,  1).        
    \end{equation}
Note that the absolute convergence of the left hand side follows from the first part of this lemma. 

We need the following claim.
    \begin{quote}\textbf{Claim.}
    There exists a small constant $\delta>0$ such that the integral $\int_{K_v} \, f(x, k) \, \d k$ is convergent uniformly on $x\in B_\delta$. 
    \end{quote}

\medskip
\noindent
{\it Proof of Claim.} 
Consider the case $\cE_v=\C$. 
If $\delta$ is sufficiently small, there exists $t>0$ such that $|\det(\pr_1((x+x_{\cE_v})\cdot\rho(1, 1, k)))|_v>t$ for any $x\in B_\delta$ and $k\in K_v$. 
This fact is easily proved by a direct calculation similar to the proof of Lemma \ref{lem:omgarch}.   
The claim is obvious in this case.

In what follows,  we assume $D_v=M_2(F_v)$ and $\cE_v=F_v\times F_v$.
In this case,  in order to show the uniform convergence,  we need to take care of the zeros of the denominator of $f(x,k)$. 

When $F_v=\R$, 
    \[
    \tilde{f}(x-x_{\cE_v})=\text{(constant)}\times \int_{-\pi}^\pi  
    \frac{\langle \phi_v, \pi_v(x_1\cos\theta+x_2\sin\theta)\phi_v\rangle_v}
    {|\det(x_1\cos\theta+x_2\sin\theta)|_v} \d \theta,
    \]
where $x=(x_1,x_2)$. 
Assume that $\delta>0$ is a sufficiently small constant. 
It suffices to show that this integral converges uniformly on $V_{\cE_v,\delta}\coloneqq\{x\in V(F_v)\mid  \|x-x_{\cE_v}\|<\delta\}$. 
Let $x\in V_{\cE_v,\delta}$.
Note that $\det(x_2)\neq 0$, because $x_2$ is close to $\delta_{\cE_v}$. 
Putting $z=\tan\theta$ and $x_3=x_2^{-1}x_1$,  the above integral becomes
    \[
     \int_\R  \frac{\langle \phi_v, \pi_v(z x_2+x_1)\phi_v\rangle_v}
    {|\det(x_2)|_v\, |\det(zI_2+x_3)|_v} \d z.
    \]
    
Since $x\in V_{\cE_v,\delta}$,  we see that $x_3$ is sufficiently close to $\delta_{\cE_v}^{-1}$. 
Set $l_x\coloneqq\tfrac12\Tr(x_3)$.
Changing the variables $z\mapsto z-l_x$,  we have
    \[
    \tilde{f}(x-x_{\cE_v})= \text{(constant)}\times \int_\R  
    \frac{\langle \phi_v, \pi_v((z-l_x)x_2+x_1)\phi_v\rangle_v}
    {|\det(x_2)|_v\, |\det(zI_2+(x_3-l_xI_2))|_v} \d z.
    \]
Since the trace of $x_3-l_xI_2$ is zero, by diagonalizing $2\times 2$ symmetric matrices, we can take $k_x\in K_v$ close to $1$ so that $u_x\coloneqq k_x^{-1}(x_3-l_xI_2)k_x$ is anti-diagonal and $u_x$ is still close to $\delta_{\cE_v}^{-1}$.
Hence $|\det(u_x)|_v$ is close to $|d_{\cE_v}|_v^{-1}$.
Set $\mathfrak{u}_x\coloneqq\sqrt{|\det(u_x)|_v}$.
Note that the diagonalization of $\mathfrak{u}_x zI_2-u_x$ is $\mathfrak{u}_x\diag(z-1,z+1)$.
Again we change the variables $z\mapsto \mathfrak{u}_x z$ to obtain
    \begin{equation}\label{eq:1221}
    \tilde{f}(x-x_{\cE_v})= \text{(constant)}\times \int_\R  
    \frac{\langle \phi_v, \pi_v((\mathfrak{u}_x z-l_x) x_2+x_1)\phi_v\rangle_v}
    {|\det(x_2)|_v\,|\mathfrak{u}_x|_v\, |z-1|_v\, |z+1|_v} \d z. 
    \end{equation}

Set $X=(\mathfrak{u}_x\,z-l_x) x_2+x_1$.
Since $x$ is in a small relatively compact set $V_{\cE_v,\delta}$,   it follows from Lemma \ref{lem:ineq2} that
    \begin{align*}
    \| X \|' & \ll \| zI_2+\mathfrak{u}_x^{-1}(x_3-l_xI_2) \|'=\| \diag(z-1,  z+1) \|'  \\
    & \ll  \left( \left|\frac{z-1}{z+1}\right|_v+\left|\frac{z+1}{z-1}\right|_v \right)^{1/2}.
    \end{align*}
Here,   $\| \; \|'$ is the norm on $\GL_2(\R)$ defined in Lemma \ref{lem:ineq2}. 
Write $X=k_1\diag(c,  1)k_2$ with $k_1,  k_2\in K_v$ and $c>1$.
It follows from \eqref{eq:mcest} that  
    \begin{align}\label{eq:1222} \nonumber
    \langle \phi_v, \pi_v(X)\phi_v\rangle_v 
    & \ll c^{-\kappa} \asymp (c+c^{-1})^{-\frac{\kappa}{2}}=\| X \|'^{-\kappa} \\
    &  \ll  \left( \left|\frac{z-1}{z+1}\right|_v+\left|\frac{z+1}{z-1}\right|_v \right)
    ^{-\frac{\kappa}{2}}
    \ll \min\{|z-1|_v,|z+1|_v\}^{\frac{\kappa}{2}}.
    \end{align}
Here, the implicit constant does not depend on $x\in V_{\cE_v,\delta}$. 
Therefore, to prove the uniform convergence, we have only to consider the integral for \eqref{eq:1221} around $z=1$ and $z=-1$. 
Thus, by \eqref{eq:1221} and \eqref{eq:1222} we get the uniform convergence of the integral for $\tilde{f}(x-x_{\cE_v})$ on $V_{\cE_v,\delta}$.

For the case $F_v=\C$, we can similarly show
    \[
    \tilde{f}(x-x_{\cE_v})= \text{(constant)}\times \int_\C  
    \frac{\langle \phi_v, \pi_v((\mathfrak{u}_x\,z-l_x) x_2+x_1)\phi_v\rangle_v}
    {|\det(x_2)|_v\,|u_1u_2|_v\, |z-1|^2\, |z+1|^2} \d z ,
    \]
    \[
    \langle \phi_v, \pi_v((\mathfrak{u}_x\,z-l_x) x_2+x_1)\phi_v\rangle_v 
    \ll \min\{|z-1|,|z+1|\}^{\kappa},
    \]
by using the calculations as in the proof of Lemma \ref{lem:omgarch}.
This proves the claim we desire. \qed

\medskip
Define $\tilde{f}:B_\delta\to \C$ by $\tilde{f}(x)\coloneqq\int_{K_v} \, f(x, k) \, \d k$. 
As a consequence of the above claim, $\tilde{f}$ is a continuous function on $B_\delta$. 
From \eqref{eq:fxk} we obtain
    \[
    I_{\cE_v}(\Phi_\varepsilon, \phi_v, 1)
    =\int_{V(F_v)}\psi_\varepsilon(x) \, \tilde{f}(x)\,  \d x.
    \]
Set $f_0(x)\coloneqq \tilde{f}(x)-\zeta_{F_v}(2)\cI_{\cE_v}( \phi_v, 1)$. 
By $\varepsilon^{-8}\int_{V(F_v)}\psi_\varepsilon(x) \, \d x=1$, we have
    \[
    \varepsilon^{-8}I_{\cE_v}(\Phi_\varepsilon, \phi_v, 1)
    =\zeta_{F_v}(2)\cI_{\cE_v}(\phi_v, 1)+\varepsilon^{-8}\int_{V(F_v)} \psi_\varepsilon(x) \, f_0(x)\,  \d x .
    \]
Since $\lim_{x\to 0}\tilde{f}(x)=\tilde{f}(0)=\zeta_{F_v}(2)\cI_{\cE_v}( \phi_v, 1)$, we have $\lim_{x\to 0} f_0(x)=0$. 
For arbitrary small $\epsilon>0$, there exists $0<\delta_\epsilon(<\delta)$ so that $|f_0(x)|<\epsilon$ for any $x\in B_{\delta_\epsilon}$. 
Hence, for any $\epsilon>0$, we have
\[
\limsup_{\varepsilon\to 0} \left| \varepsilon^{-8}\int_{V(F_v)} \psi_\varepsilon(x) \, f_0(x)\,  \d x \right| < \epsilon . 
\]
This means $\lim_{\varepsilon\to 0} \varepsilon^{-8}\int_{V(F_v)} \psi_\varepsilon(x) \, f_0(x)\,  \d x=0$, hence
    \[
    \lim_{\varepsilon\to0}\varepsilon^{-8}I_{\cE_v}(\Phi_\varepsilon, \phi_v, 1)
    =\zeta_{F_v}(2)\cI_{\cE_v}(\phi_v, 1).
    \]

\if0
    \[
    \lim_{\varepsilon\to0}\varepsilon^{-8}I_{\cE_v}(\Phi_\varepsilon, \phi_v, 1)
    =\zeta_{F_v}(2)\cI_{\cE_v}(\phi_v, 1)+O(\int_{V(F_v)}\varepsilon^{-8} \psi_\varepsilon(x) \, f_0(x)\,  \d x .
    \]
    \[
    \left|\tilde{f}(x)-\zeta_{F_v}(2)\cI_{\cE_v}(\phi_v, 1)\right|<\epsilon
    \]
for any $x\in B_{\delta_\epsilon}$. 
From this we have $\varepsilon^{-8}I_{\cE_v}(\Phi_\varepsilon, \phi_v, 1)=\zeta_{F_v}(2)\cI_{\cE_v}(\phi_v, 1) +O(\epsilon)$ for any small $\epsilon>0$ and any $0<\varepsilon<\delta_{\epsilon}$, hence
    \[
    \lim_{\varepsilon\to0}\varepsilon^{-8}I_{\cE_v}(\Phi_\varepsilon, \phi_v, 1)
    =\zeta_{F_v}(2)\cI_{\cE_v}(\phi_v, 1).
    \]

Hence we see that
    \[
    \lim_{x\to 0} \int_{K_v}f(x, k)\d k= \lim_{x\to 0}\tilde{f}(x)=\tilde{f}(0)=\int_{K_v}f(0, k)\d k
    =\zeta_{F_v}(2)\cI_{\cE_v}( \phi_v, 1).
    \]
For each $\delta>0$, there exists $0<\varepsilon<\delta$ so that
    \[
    \left|\int_{K_v}f(x, k)\d k-\zeta_{F_v}(2)\cI_{\cE_v}(\phi_v, 1)\right|<\delta
    \]
for any $x\in B_\varepsilon$.
$\int_{V(F_v)} \psi_\varepsilon(x)\, \tilde{f}(x)\, \d x =I_{\cE_v}(\Phi_v,  \phi_v,  1)$.
Hence we have
    \[
    \lim_{\varepsilon\to0}\varepsilon^{-8}I_{\cE_v}(\Phi_\varepsilon, \phi_v, 1)
    =\zeta_{F_v}(2)\cI_{\cE_v}(\phi_v, 1).
    \]
\fi

(3) Since $|P(x)|_v^{-1/2}$ and $\alpha_{\cE_v}(\pi_v(g_1)\phi_v,\pi_v(g_2)\phi_v)$ are smooth on a small neighborhood of $x_{\cE_v}\in V_{\cE_v}(F_v)$, it follows from the Taylor expansion that $|P(x)|_v^{s-2}=|P(x_{\cE_v})|_v^{s-2}+O(\| x-x_{\cE_v}\|)$ and $\alpha_{\cE_v}(\pi_v(g_1)\phi_v,\pi_v(g_2)\phi_v)=\alpha_{\cE_v}(\phi_v,\phi_v)+O(\| x-x_{\cE_v}\|)$, where $x=x_{\cE_v}\cdot \rho(g_1,g_2,g_3)$.
Thus we get
    \[
    Z_{\cE_v}(\Phi_\varepsilon, \phi_v, s)
    =\frac{2\, \varepsilon^8}{L(1, \eta_{\cE_v})^2}
    \left(|P(x_{\cE_v})|_v^{s-2}\alpha_{\cE_v}(\phi_v, \phi_v)+O(\varepsilon)\right).
    \]
This implies that \eqref{eq:cond2} holds by the assumption $\alpha_{\cE_v}(\phi_v, \phi_v)\neq 0$.
Note that \eqref{eq:cond} holds by the definition of $\Phi_\varepsilon$. 
Since $\fP_v(D, \cE_v, \phi_v)$ is independent of $\varepsilon$, we get
    \begin{align*}
    \fP_v(D, \cE_v, \phi_v)&=\lim_{\varepsilon\to0}\ 
    \frac{\alpha_{\cE_v}(\phi_v, \phi_v)}{|2|_v\zeta_{F_v}(2)L(\frac12, \pi_v)} 
    \frac{I_{\cE_v}(\Phi_\varepsilon, \phi_v, 1)}{Z_{\cE_v}( \Phi_\varepsilon, \phi_v, \frac32)} 
    \\
    &=\frac{L(1, \eta_{\cE_v})^2|d_{\cE_v}|_v^\frac12}{2\, L(\frac12, \pi_v)}
    \cI_{\cE_v}(\phi_v, 1). \qedhere
    \end{align*}
\end{proof}

\begin{lem}\label{lem:omgarch}
Suppose that $v$ is an archimedean place.
Then we have
    \[
    \cI_{\cE_v}(\phi_v, 1)
    =c_v^{-1} L(1, \eta_{\cE_v})^{-m_v} \,
    \frac{\alpha_{\cE_v}(\phi_v, \phi_v)}{\langle \phi_v, \phi_v \rangle_v},
    \]
where 
    \[
    m_v=
        \begin{cases}
        1 & \text{if $v$ is a real place} \\
        2 & \text{if $v$ is a complex place}.
        \end{cases}
    \] 
\end{lem}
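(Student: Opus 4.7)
My plan is to adapt the argument of Lemma \ref{lem:omg} to the archimedean setting, noting that the target identity is formally analogous: with $c_v=1$ at archimedean places, the only structural change is that the exponent of $L(1,\eta_{\cE_v})$ becomes $-m_v$ in place of $-1$, reflecting the real dimension of $F_v$. The goal is to convert the $K_v$-integral defining $\cI_{\cE_v}(\phi_v,1)$ into a multiple of the toric integral $\alpha_{\cE_v}(\phi_v,\phi_v)$.

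The integrand in the definition of $\cI_{\cE_v}(\phi_v,1)$ depends on $k=\begin{psmallmatrix}a&c\\b&d\end{psmallmatrix}\in K_v$ only through $h:=a+b\delta_{\cE_v}\in\cE_v^\times$. The first step is to push forward the Haar measure on $K_v$ along the map $k\mapsto (a,b)$; the resulting measure is supported on the ``unit norm'' subset $\{h\in\cE_v^\times:\det(h)=1\}$, and the integral becomes an integral over this subset of the matrix coefficient $\langle\phi_v,\pi_v(h)\phi_v\rangle_v/\langle\phi_v,\phi_v\rangle_v$. The second step is to compare this integral with $\alpha_{\cE_v}(\phi_v,\phi_v)/\langle\phi_v,\phi_v\rangle_v$ via the polar decomposition $\cE_v^\times=F_v^{>0}\cdot\{h:\det h=1\}$, which identifies $\{h:\det h=1\}$ (modulo its finite intersection with $F_v^\times$) with $F_v^\times\bs\cE_v^\times$. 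The two measures differ by an explicit scalar involving $\zeta_{F_v}(2)$ and powers of $L_v(1,\eta_{\cE_v})$ coming from the definition of $dh_{\cE_v}$ in \S\ref{sec:measure1}, together with the factor $d^\times z_v$ being quotiented out. Using the triviality of the central character of $\pi_v$ to handle the sign ambiguity in the polar decomposition yields the claimed identity.

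The main obstacle is the case analysis (real versus complex $v$, split versus non-split $\cE_v$) and the correct tracking of the exponent $m_v$. This exponent arises because the local archimedean $L$-factor $L_v(1,\eta_{\cE_v})$ in the definition of $dh_{\cE_v}$ enters once for each real dimension of $F_v$: once for real $v$ and twice for complex $v$, matching the archimedean contribution to the Tamagawa normalization of $\cE_v^\times$. In the split cases where $F_v^\times\bs\cE_v^\times$ is non-compact (real $v$ with $\cE_v\simeq F_v\times F_v$, or any complex $v$), the $K_v$-image is reconciled with $F_v^\times\bs\cE_v^\times$ via an Iwasawa-style decomposition and a finite covering argument, with convergence of the resulting integral guaranteed by the matrix coefficient decay invoked in Lemma \ref{lem:convarch}(1); in the compact case ($\cE_v\simeq\C$ for real $v$) the identification is a direct polar computation.
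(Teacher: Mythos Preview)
Your overall strategy matches the paper's: reduce the $K_v$-integral to an integral over the image of the first column $(a,b)$, then relate that to the torus integral defining $\alpha_{\cE_v}$. But your first step contains a genuine error. The pushforward of Haar measure along $k\mapsto(a,b)$ is supported on the \emph{Euclidean/Hermitian unit sphere} $\{(a,b):|a|^2+|b|^2=1\}$ in $F_v^2$, not on $\{h\in\cE_v^\times:\det(h)=1\}$. These two sets coincide only in the compact real case $\cE_v\simeq\C$ with $d_{\cE_v}=-1$, where indeed $\det(a+b\delta_{\cE_v})=a^2+b^2$. In the split real case ($d_{\cE_v}=1$) one has $\det(a+b\delta_{\cE_v})=a^2-b^2=\cos 2\theta$, and in the complex case $|\det(a+b\delta_{\cE_v})|_v$ is likewise nonconstant on the Hermitian sphere $|a|^2+|b|^2=1$. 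Consequently your polar decomposition $\cE_v^\times=F_v^{>0}\cdot\{\det h=1\}$ is not the relevant identification, and the ``Iwasawa-style'' fix you allude to would have to start from scratch.

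The paper avoids this by working with explicit parametrizations. For $K_v=\O(2)$ it writes $(a,b)=(\cos\theta,\sin\theta)$; in the split real case the substitution $z=\cot\theta$ (not a polar decomposition of $\cE_v^\times$, but the projectivization $(a,b)\mapsto a/b$) converts the $S^1$-integral into $\int_{F_v^\times}\langle\phi_v,\pi_v(z+\delta_{\cE_v})\phi_v\rangle_v\,|z^2-1|^{-1}\,dz$, which is exactly the form of $\alpha_{\cE_v}$. For $K_v=\U(2)$ the paper uses Euler angles and the same projectivization $z=\cot\theta$ together with the phase $e^{2\sqrt{-1}\psi}$ to land on $\C^\times$. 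The correct conceptual picture is that the unit sphere in $F_v^2$ is a section of the $F_v^{>0}$-scaling on $F_v^2\setminus\{0\}$, and the map $(a,b)\mapsto[a+b\delta_{\cE_v}]\in F_v^\times\backslash\cE_v^\times$ is (away from a measure-zero set) a finite cover; the exponent $m_v$ then falls out of the Jacobian of this identification, matching $\dim_\R F_v$ as you say.
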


\begin{proof}
First we consider the case of a real place.
Suppose that $\cE_v$ is isomorphic to $\C$. 
Then, we have $\delta_{\cE_v}^2=-1$ by the assumption.
We have
    \begin{align*}
    \cI_{\cE_v}(\phi_v, 1)
    &=\zeta_{F_v}(2)^{-1}\frac{2}{4\pi}
    \int_{-\pi}^\pi\frac{\langle \phi_v, \pi_v(\cos\theta+(\sin\theta)\delta_{\cE_v})
    \phi_v \rangle_v}{\langle \phi_v, \phi_v \rangle_v} \d\theta \\
    &=L(1, \eta_{\cE_v})^{-1}\int_{F_v^\times\bs\cE_v^\times}
    \frac{\langle \phi_v, \pi_v(h_v)\phi_v \rangle_v}{\langle \phi_v, \phi_v \rangle_v} 
    \d h_v
    =c_v^{-1}L(1, \eta_{\cE_v})^{-1}
    \frac{\alpha_{\cE_v}(\phi_v, \phi_v)}{\langle \phi_v, \phi_v \rangle_v}.
    \end{align*}
Suppose that $\cE_v$ is isomorphic to $\R\times\R$. 
In this case, $\delta_{\cE_v}^2=1$ and hence
    \[
    \cI_{\cE_v}(\phi_v, 1)
    =\zeta_{F_v}(2)^{-1}\frac{2}{4\pi}\int_{-\pi}^\pi 
    \frac{\langle \phi_v, \pi_v(\cos\theta+(\sin\theta)\delta_{\cE_v})
    \phi_v \rangle_v}{\langle \phi_v, \phi_v \rangle_v |\cos^2\theta-\sin^2\theta|} \d\theta.
    \]
Making the substitution $z=\cot\theta$, the above integral becomes
    \begin{align*}
    \int_{F_v^\times} \frac{\langle \phi_v, \pi_v(z+\delta_{\cE_v})\phi_v \rangle_v}
    {\langle \phi_v, \phi_v \rangle_v |z^2-1|} \d z
    &=c_v^{-1}L(1, \eta_{\cE_v})^{-1}\frac{\alpha_{\cE_v}(\phi_v, \phi_v)}
    {\langle \phi_v, \phi_v \rangle_v}.
    \end{align*}

Next we treat the case where $v$ is a complex place.
In this case, $\cE_v$ is isomorphic to $\C\times\C$.
If we write an element of $K_v=\U(2)$ as 
    \[
    g=e^{\sqrt{-1}\kappa}
        \begin{pmatrix}
        e^{\sqrt{-1}(\psi+\varphi)}\cos\theta & -e^{\sqrt{-1}(\psi-\varphi)}\sin\theta \\
        e^{\sqrt{-1}(\varphi-\psi)}\sin\theta & e^{-\sqrt{-1}(\psi+\varphi)}\cos\theta
        \end{pmatrix}
    \]
with $0\leq\kappa\leq\pi$, $0\leq\theta\leq\frac\pi2$, $0\leq\varphi\leq\pi$ and $-\pi\leq\psi\leq\pi$, the normalize Haar measure on $K_v$ is given by $\d g=\frac{1}{2\pi^3}\sin2\theta\d\kappa \d\theta \d\varphi \d\psi$.
See \cite{Faraut}*{Proposition 7.2.1}.
Hence $\cI_{\cE_v}(\phi_v, 1)$ equals
    \begin{align*}
    &\frac{\zeta_{F_v}(2)^{-1}}{2\pi^2}\int_0^{\frac\pi2}\d\theta
    \int_0^\pi\d\varphi \int_{-\pi}^\pi \d\psi\
    \frac{\langle \phi_v, \pi_v(e^{\sqrt{-1}(\psi+\varphi)}\cos\theta
    +e^{\sqrt{-1}(\varphi-\psi)}\sin\theta\delta_{\cE_v})\phi_v\rangle_v\, \sin2\theta}
    {\langle \phi_v, \phi_v\rangle_v \,
    |e^{2\sqrt{-1}(\psi+\varphi)}\cos^2\theta-e^{2\sqrt{-1}(\varphi-\psi)}\sin^2\theta|_v} 
    \\[5pt]
    &=4\pi\int_{-\frac\pi2}^{\frac\pi2}\d\psi \int_0^\infty \d z\
    \frac{\langle \phi_v, \pi_v(ze^{2\sqrt{-1}\psi}+\delta_{\cE_v})\phi_v\rangle_v\, z}
    {\langle \phi_v, \phi_v\rangle_v \, |z^2e^{4\sqrt{-1}\psi}-1|_v},
    \end{align*}
where we set $z=\cot\theta$.
Note that the normalized absolute value $|\cdot|_v$ for a complex place $v$ is given by $|z|_v=z\bar{z}$,  $z\in\C$,  where $\bar{\cdot}$ is the complex conjugate.

Put $x=z\, e^{2\sqrt{-1}\psi}$.
The self-dual Haar measure on $\C$ is given by $ \d x=4z\d z \d\psi$.
Thus the last expression becomes
    \[
    \pi\int_{\C^\times}\frac{\langle \phi_v, \pi_v(x+\delta_{\cE_v})\phi_v\rangle_v}
    {\langle \phi_v, \phi_v\rangle_v}\, \frac{\d x}{|x^2-1|_v}
    =\pi^2\frac{\alpha_{\cE_v}(\phi_v, \phi_v)}{\langle \phi_v, \phi_v\rangle_v}. \qedhere
    \]
\end{proof}

\begin{cor}\label{cor:fP}
For any $v\in\Sigma$ and $\cE_v\in X(D_v)$,
    \[
    \fP_v(D, \cE_v, \phi_v)
    =\frac{|d_{\cE_v}|_v^{\frac12}L(1, \eta_{\cE_v})^{2-m_v}}{2c_v \, L(\frac12, \pi_v)} \,
    \frac{\alpha_{\cE_v}(\phi_v, \phi_v)}{\langle \phi_v, \phi_v \rangle_v}.
    \]
Here, $m_v=2$ if $v$ is a complex place and $m_v=1$ otherwise.
\end{cor}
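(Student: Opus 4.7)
The plan is to deduce Corollary \ref{cor:fP} as an immediate consequence of the preceding two pairs of lemmas, handling the non-archimedean and archimedean cases separately and then observing that both answers fit into the single uniform formula thanks to the introduction of the exponent $m_v$.

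For a finite place $v$, I would start from Lemma \ref{lem:fP}(2), specialized to $s=\tfrac32$, which gives
\[
\fP_v(D, \cE_v, \phi_v) = \frac{L(1, \eta_{\cE_v})^2 |d_{\cE_v}|_v^{1/2}}{2\, L(\tfrac12, \pi_v)} \, \cI_{\cE_v}(\phi_v, 1).
\]
Then substitute the evaluation of $\cI_{\cE_v}(\phi_v,1)$ provided by Lemma \ref{lem:omg}. Exactly one factor of $L(1,\eta_{\cE_v})$ and one of $c_v$ cancel, leaving $L(1,\eta_{\cE_v})^{1}$ in the numerator and $c_v$ in the denominator, which is the asserted formula with $m_v = 1$.

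For an archimedean place $v$, the computation has the same shape: Lemma \ref{lem:convarch}(3) gives the analogous formula
\[
\fP_v(D, \cE_v, \phi_v) = \frac{L(1,\eta_{\cE_v})^2 |d_{\cE_v}|_v^{1/2}}{2\, L(\tfrac12,\pi_v)}\, \cI_{\cE_v}(\phi_v,1),
\]
and Lemma \ref{lem:omgarch} evaluates $\cI_{\cE_v}(\phi_v,1)$ as $c_v^{-1} L(1,\eta_{\cE_v})^{-m_v}\,\alpha_{\cE_v}(\phi_v,\phi_v)/\langle\phi_v,\phi_v\rangle_v$ with $m_v = 1$ for real $v$ and $m_v = 2$ for complex $v$. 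Multiplying, one again recovers the stated formula, with the power $2 - m_v$ of $L(1,\eta_{\cE_v})$ in the numerator reflecting precisely the different archimedean exponents.

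Since both cases yield the same expression once the exponent $m_v$ is introduced, the proof is essentially just an arithmetic combination of the four lemmas. There is no real obstacle at this stage, as the heavy lifting — the meromorphic continuation and absolute convergence of $\cI_{\cE_v}(\phi_v,s)$ at $s=1$, the precise matching of local measures, and the archimedean integration over $K_v$ in the $r_1$ and $r_2$ cases — has already been carried out in Lemmas \ref{lem:fP}--\ref{lem:omgarch}. I would therefore present the corollary's proof in two short paragraphs, one for finite $v$ and one for archimedean $v$, explicitly recording the cancellations and pointing to the uniform role of the exponent $m_v$.
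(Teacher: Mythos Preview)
Your proposal is correct and matches the paper's own proof, which simply cites Lemmas \ref{lem:fP}, \ref{lem:omg}, \ref{lem:convarch}, and \ref{lem:omgarch} in a single line. You have spelled out the arithmetic cancellations in more detail than the paper does, but the argument is the same.
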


\begin{proof}
This follows from Lemma \ref{lem:fP}, Lemma \ref{lem:omg}, Lemma \ref{lem:convarch} and Lemma \ref{lem:omgarch}.
\end{proof}

\begin{cor}\label{cor:fPun}
Suppose that $v$ is a finite place of $F$ such that $D_v$ splits, $\pi_v$ is unramified and $\phi_v$ is the normalized spherical vector. 
We also assume that $d_{\cE_v}\in\fo_v\setminus\varpi_v^2\fo_v$ and the maximal compact subgroup of $\cE_v^\times$ is contained in $g^{-1}K_vg$ for some $g\in D_v^\times$.
When $v$ is dyadic,  we further assume that $F_v=\Q_2$.
Then 
    \begin{multline*}
    \fP_v(D, \cE_v, \phi_v) =\\
    \frac{|d_{\cE_v}|_v^{\frac12}\, \zeta_{F_v}(2)L(\frac12, \pi_v\otimes\eta_{\cE_v})} 
    {2c_v L(1, \pi_v, \Ad)} \times\begin{cases} 2 & \text{if $v=2$ and $\cE_2$ is unramified over $\Q_2$,} \\ 1 & \text{otherwise.} \end{cases}    
    \end{multline*}
And $\fP_v(D, \cE_v, \phi_v)>0$.
If $v$ is not dyadic,  we have
    \begin{equation}\label{eq:0815}
    \sum_{\cE_v\in X(D_v)}N(\ff_{\cE_v})^{-1/2}\fP_v(D, \cE_v, \phi_v)
    =1-q_v^{-3}-\frac{1-q_v^{-1}}{1+q_v^{-1}}q_v^{-3}\lambda_v^2.
    \end{equation}
\end{cor}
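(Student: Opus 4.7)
My plan is to derive the closed form for $\fP_v(D,\cE_v,\phi_v)$ by combining Corollary \ref{cor:fP} with Waldspurger's classical unramified computation of the local toric integral, and then to evaluate the sum \eqref{eq:0815} by enumerating the four isomorphism classes of quadratic \'etale algebras over $F_v$. Since all dyadic places lie in $S$, the residue characteristic at $v$ is odd, so $X(D_v)$ consists of the split algebra $F_v\oplus F_v$, the unramified quadratic extension, and the two ramified quadratic extensions distinguished by square classes in $\fo_v^\times\varpi_v$.

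First I would specialize Corollary \ref{cor:fP} to a finite place ($m_v=1$), reducing the task to evaluating the ratio $\alpha_{\cE_v}(\phi_v,\phi_v)/\langle\phi_v,\phi_v\rangle_v$ for the spherical vector $\phi_v$. Under our hypotheses the matrix coefficient $\langle\pi_v(\cdot)\phi_v,\phi_v\rangle_v$ is spherical, and Waldspurger's computation [Wal2, Lemmes 2--3]---essentially the Macdonald formula applied to the integral over $F_v^\times\bs\cE_v^\times$---expresses this ratio as an explicit product of local $L$-factors. Substituting this back into the expression for $\fP_v$ yields the stated closed form. Positivity then follows immediately from Theorem \ref{thm:BB}: the Blomer--Brumley bound $|\alpha_v^{\pm 1}|\leq q_v^{7/64}$ forces every Euler factor of $L(\tfrac12,\pi_v\otimes\eta_{\cE_v})^{-1}$ and of $L(1,\pi_v,\Ad)^{-1}$ to be a product of positive reals, while $\zeta_{F_v}(2)$, $c_v$, and $|d_{\cE_v}|_v^{1/2}$ are manifestly positive.

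For the sum, I would compute each of the four contributions separately using the standard spherical $L$-factors. The crucial algebraic input is the identity
\[
L(1,\pi_v,\Ad)^{-1}=(1-q_v^{-1})\,L(\tfrac12,\pi_v)^{-1}\,L(\tfrac12,\pi_v\otimes\eta_v^{\mathrm{ur}})^{-1},
\]
obtained by factoring $(1+q_v^{-1})^2-q_v^{-2}\lambda_v^2$ after substituting $\alpha_v^2+\alpha_v^{-2}=q_v^{-1}\lambda_v^2-2$. Using this, the split and unramified quadratic contributions telescope to $1-q_v^{-1}$, while the two ramified contributions---which satisfy $L(\tfrac12,\pi_v\otimes\eta_{\cE_v})=1$ and $|d_{\cE_v}|_v=q_v^{-1}=N(\ff_{\cE_v})^{-1}$---combine to $q_v^{-1}-q_v^{-3}-\tfrac{1-q_v^{-1}}{1+q_v^{-1}}q_v^{-3}\lambda_v^2$. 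Summing the two halves produces the right-hand side of \eqref{eq:0815}.

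The main difficulty is not conceptual but rather the careful bookkeeping of normalization constants across the four cases. One has to track how $|d_{\cE_v}|_v$ (appearing in $\fP_v$), $N(\ff_{\cE_v})$ (appearing in the weight), $c_v$, $\zeta_{F_v}(2)$, and the various local $L$-factors combine; in particular the relation $|d_{\cE_v}|_v=N(\ff_{\cE_v})^{-1}$ in the ramified cases is what makes the weighted $\fP_v$ simplify uniformly to $\kappa_{\cE_v}(\pi_v)/N(\ff_{\cE_v})$, after which the identity above does the rest.
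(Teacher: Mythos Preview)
Your proposal follows essentially the same route as the paper: specialize Corollary \ref{cor:fP}, invoke Waldspurger's unramified evaluation of $\alpha_{\cE_v}(\phi_v,\phi_v)$, and deduce positivity from Theorem \ref{thm:BB}. Your explicit verification of \eqref{eq:0815} via the identity $L(1,\pi_v,\Ad)^{-1}=(1-q_v^{-1})L(\tfrac12,\pi_v)^{-1}L(\tfrac12,\pi_v\otimes\eta_v^{\mathrm{ur}})^{-1}$ is a welcome addition that the paper leaves implicit.

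One point of divergence worth flagging: the paper does \emph{not} cite Waldspurger's Lemmes 2--3 in the case where $\cE_v/F_v$ is ramified. Those lemmas in \cite{Wal2} are stated for places at which the extension is unramified (or split), so the displayed formula for $\alpha_{\cE_v}(\phi_v,\phi_v)$ is not literally available there when $\cE_v$ is a ramified quadratic field. The paper instead goes back to Lemma \ref{lem:fP}(2) and computes $\cI_{\cE_v}(\phi_v,1)$ directly in that case, obtaining the same closed form. Your plan to apply Waldspurger uniformly is morally right---the computation is still just the Macdonald formula on a compact torus---but you should either supply that short direct calculation for ramified $\cE_v$ or note explicitly that the cited lemmas do not cover it; otherwise the ramified contribution to \eqref{eq:0815}, and hence the crucial extra factor of $q_v^{-1}$ coming from $|d_{\cE_v}|_v^{1/2}N(\ff_{\cE_v})^{-1/2}$, is not fully justified.
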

\begin{proof}
Let $\cK_v$ denote the maximal compact subgroup of $\cE_v^\times$. 
First, we consider the case that $\cE_v$ is not a ramified extension of $F_v$. 
In this case, this is a consequence of Corollary \ref{cor:fP} and the unramified computation
    \[
    \alpha_{\cE_v}(\phi_v, \phi_v)=
    \frac{\zeta_{F_v}(2)L(\frac12, \pi_v)L(\frac12, \pi_v\otimes\eta_{\cE_v})}
    {L(1, \pi_v, \Ad)L(1, \eta_{\cE_v})} \times \vol(\overline{\cK_v})
    \]
carried out by Waldspurger \cite{Wal2}*{Lemmes 2, 3} (see also \cite{II}*{Theorem 1.2}).
When $v$ is not dyadic, we can take
    \[
    \cK_v=\left\{ \begin{pmatrix}  a & b \\ d_{\cE_v} b & a \end{pmatrix}\in \GL_2(\fo_v)\right\},
    \] 
and so we compute $\vol(\overline{\cK_v})=\int_{\cK_v} \d h_{\cE_v}=1$ by the definition \eqref{eq:localmeah} of $\d h_{\cE_v}$.  
On the other hand, when $v$ is dyadic, that is, $F_v=\Q_2$ and $d_{\cE_2}\equiv 1 \mod 4\Z_2$, we can take
    \[
    \cK_2=\left\{ 
        \begin{pmatrix}  
        a+ b/2 & b/2 \\ 
        d_{\cE_2} b/2 & a+b/2 
        \end{pmatrix}
    \; \middle|  \; a,b\in\Z_2 \right\},
    \]
since we have
    \[
    g\cK_2 g^{-1}=\left\{ \begin{pmatrix}  a & b \\ u b & a+b \end{pmatrix}\in\GL_2(\Z_2)  \right\}, \quad u=\frac{d_{\cE_2}-1}{4},\quad g=
        \begin{pmatrix}
        1&0 \\ 
        1/2& 1/2 
        \end{pmatrix}
    \]
which means $\cK_2 \subset g^{-1}\GL_2(\Z_2)g$. 
Hence, we compute $\vol(\overline{\cK_2})=\int_{\cK_2} \d h_{\cE_2}=2$ by \eqref{eq:localmeah}.

Next, we treat the case that $\cE_v$ is a ramified extension over $F_v$ and $d_{\cE_v}\in\varpi_v\fo_v^\times$. 
It is known that $\lambda_v$ equals the eigenvalue of the integral operator $\pi_v(f)$ for $\phi_v$, where $f$ is the characteristic function of $K_v\diag(1,\varpi_v)K_v$, cf. \cite{Bump}*{Proposition 4.6.6}. 
Hence, by using $\vol(K_v\diag(1,\varpi_v)K_v)=q_v+1$, we have 
    \[
    \lambda_v=\int_{K_v\diag(1,\varpi_v)K_v} \langle \phi_v,\pi_v(g_v)\phi_v \rangle_v \, \d g_v
    =(q_v+1)\, \langle \phi_v,\pi_v(\diag(1,\varpi_v))\phi_v\rangle_v.
    \]
We also note
    \[
    K_v (a+b\delta_{\cE_v}) K_v=K_v \diag(1,a^2-b^2 d_{\cE_v})K_v
    \]
if $(a,b)\in \fo_v\times \fo_v \setminus \varpi_v\fo_v\times\varpi_v\fo_v$.
Hence, we have
\begin{multline*}
\cI_{\cE_v}(\phi_v, 1)=\int_{(\fo_v^\times\times\fo_v)\sqcup(\varpi_v\fo_v\times\fo_v^\times)} \frac{\langle \phi_v,\pi_v(a+b\delta_{\cE_v})\phi_v \rangle}{|\det(a+b\delta_{\cE_v})|_v}\d a \, \d b \\ 
=(1-q_v^{-1})+(1-q_v^{-1})\langle \phi_v,\pi_v(\diag(1,\varpi_v))\phi_v\rangle = \frac{(1-q_v^{-1})(1+q_v^{-1}+q_v^{-1}\lambda_v)}{1+q_v^{-1}}
\end{multline*}
since $a+b\delta_{\cE_v}$ belongs to $K_v$ (resp. $\delta_{\cE_v} K_v$) if $(a,b)\in \fo_v^\times\times\fo_v$ (resp. $\varpi_v\fo_v\times\fo_v^\times$).
From this and Lemma \ref{lem:fP}, we obtain the desired equality. 

The remaining case is that $F_v=\Q_2$, $\cE_v$ is ramified over $F_v$, and $d_{\cE_v}\in \fo_v^\times$.   
In this case, $\fo_v=\Z_2$, $q_v=2$, $d_{\cE_v}\equiv 3 \mod 4$, and we can choose $\varpi_v=2$.
Dividing $\fo_v\times \fo_v \setminus \varpi_v\fo_v\times\varpi_v\fo_v$ into $(\fo_v^\times\times\varpi_v\fo_v)\sqcup(\varpi_v\fo_v\times\fo_v^\times)\sqcup(\fo_v^\times\times\fo_v^\times)$ we have
    \[
    \cI_{\cE_v}(\phi_v, 1)=\frac{1}{4}+\frac{1}{4}
    +\frac{1}{2}\int_{\fo^\times}\frac{\langle \phi_v,\pi_v(\diag(1,a^2-d_{\cE_v}))\phi_v \rangle}{|a^2-d_{\cE_v}|_v}\d a.
    \]
Hence, we get $\cI_{\cE_v}(\phi_v, 1)=(3+\lambda_v)/6$ since $a^2-d_{\cE_v}\in 2(1+2\fo_v)$ for $a\in\fo_v^\times$.
Therefore, we obtain the assertion by Lemma \ref{lem:fP}.

The positivity of $\fP_v(D, \cE_v, \phi_v)$ follows from Theorem \ref{thm:BB}. 
Equation \eqref{eq:0815} follows from the above calculations. 
\end{proof}


\section{Proof of the mean value theorems}\label{sec:proof}


\subsection{Proof of Theorem \ref{thm:main}}

Now we give the proof of Theorem \ref{thm:main}.

If $L(\tfrac12, \pi)=0$,  then $\cP_E(\phi)=0$ for all $E\in X(D)$ and hence the both sides of \eqref{eq:mvf} are zero.
For the rest, we assume $L(\tfrac12, \pi)\neq0$.
Under this assumption, $\sD(D, \cE_S, \phi,s)$ is not identically zero, see \cite{SW}*{Proposition 3.4}.

Let $T$ be a finite subset of $\Sigma$ containing $S$.
Set
    \[
    \sD(D, \cE_S, \phi, T, s)
    =\sum_{E\in X(D,\cE_S)}
    \frac{L(1,\eta)^2\alpha_E^{\cE_S}(\phi)\, |\cP_E(\phi)|^2\mathcal{D}_E^T(\pi,s)}
    {N(\ff_E^S)^{s-1}}
    \]
and $\fC(D, \cE_S, \phi, T)=\underset{s=3/2}{\Res}\sD(D, \cE_S, \phi, T,s)$.
Clearly we have $\sD(D, \cE_S, \phi, S, s)=\sD(D, \cE_S, \phi, s)$ and $\fC(D,\cE_S,\phi,S)=\fC(D,\cE_S,\phi)$.
For $\cE_{T'}=(\cE_v)_{v\in T'}\in X(D_{T'})=\prod_{v\in T'}X(D_v)$, we set $N(\ff_{\cE_{T'}})=\prod_{v\in T'}N(\ff_{\cE_v})$. 
By Lemma \ref{lem:unchange}, we obtain
    \[
    \sD(D, \cE_S, \phi, T, s)=\sum_{\cE'\in X(D_{T\setminus S})} 
    \frac{1}{N(\ff_{\cE'})^{s-1}}\sD(D, \cE_S\cup\cE', \phi, s),  
    \]
where $\cE'=(\cE'_v)_{v\in T\setminus S}$ runs through $X(D_{T\setminus S})$ and $\cE_S\cup\cE'$ is an element of $X(D_T)=\prod_{v\in T}X(D_v)$.
In particular,   $\sD(D, \cE_S, \phi, T, s)$ is a meromorphic function on $\C$ and has a simple pole at $s=\tfrac32$ with a positive residue (see Lemma \ref{lem:dir} and Corollary \ref{cor:nonv3}).
Together with Theorem \ref{thm:re} and Corollary \ref{cor:nonv3}, we get
    \[
    \fC(D, \cE_S, \phi,T)
    =\fc_F c_F\langle \phi, \phi \rangle\cdot \frac{\zeta_F(2)L(\frac12, \pi)}{L^T(2, \pi, \Ad)}
    \sum_{\cE'\in X(D_{T\setminus S})}
    \frac{\fP_T(D, \cE_S\cup\cE', \phi_T)}
    {N(\ff_{\cE'})^{\frac12}}.
    \]
By \eqref{eq:0815}, this becomes
    \begin{equation}\label{eq:20200904}
    \fC(D, \cE_S, \phi, T)=\fC(D, \cE_S, \phi) \prod_{v\in T\setminus S}
    L(2,\pi_v,\mathrm{Ad})
    \left\{ 1-q_v^{-3}-\frac{1-q^{-1}}{1+q^{-1}}q_v^{-3}\lambda_v^2\right\}.
    \end{equation}

We follow the same line as the proof of \cite{KY1}*{Theorem 6.22} to deduce \eqref{eq:mvf} from \eqref{eq:20200904}.
In particular, we apply the filtering process formulated by Datskovsky and Wright.
Assume that $T$ contains all places above primes less than $20$.
We write
    \[
    \sD(D, \cE_S, \phi, T, s+\tfrac{3}{2}-t)=\sum_{m=1}^\inf \frac{c_{m}(t)}{m^s}
    \]
and
    \[
    \mathcal{D}_E^T(\pi,s+\tfrac{3}{2}-t)=\sum_{n=1}^\inf \frac{ a_n(E,\pi,T,t) }{n^s}.
    \]
Note that $a_1(E,\pi,T,t)=1$.
From the definition of $\sD(D, \cE_S, \phi, T, s)$ we have
    \[
    c_m(t)=\sum_{n=1}^\infty \sum_{\substack{E\in X(D,\cE_S), \\   nN(\ff^S_E)=m}}  
    L(1,\eta)^2 \,\alpha_E^{\cE_S}(\phi)\, |\cP_E(\phi)|^2\,  
    \frac{a_n(E,\pi,T,t)}{N(\ff_E^S)^{\frac12-t}}.
    \]
The following theorem is called the Tauberian theorem.
\begin{thm}\label{thm:tauberian}
    Let $a_n$ be a non-negative real number for any $n\in\N$, and $M$ be a positive real number. 
    A Dirichlet series $L(s) = \sum_{n=1}^\inf \frac{a_n}{n^{-s}}$ is absolutely and uniformly convergent on any compact set in the domain $\{ s\in \C \mid \Re(s)>M \}$. In addition, $L(s)$ is meromorphically continued to a domain including $\{ s\in \C \mid \Re(s)\ge M \}$, and $L(s)$ has a simple pole only at $s = M$. Set $A=\underset{s=\frac32}{\Res}\, L(s)>0$. Then, we have
    \[
    \lim_{X\to\inf }X^{-M} \sum_{n<X}a_n=\frac{A}{M}.
    \]
\end{thm}
\begin{proof}
For the case $M=1$, we refer to \cite{Nark}*{Theorem I in p.464}. 
Our assertion is obtained by putting $\tilde{L}(s)=L(Ms)$.
\end{proof}
Applying Theorem \ref{thm:tauberian} to $\sD(D, \cE_S, \phi, T, s+\tfrac{3}{2}-t)$, we obtain
    \begin{equation}\label{eq:tauberian}
    \lim_{x\to\inf} x^{-t} \sum_{m<x}c_m(t)
    =\frac{1}{t}\fC(D, \cE_S, \phi, T). 
    \end{equation}

By the assumption for $T$, we have $2\,\left| q_v^{-\frac{25}{64}}+q_v^{-\frac{39}{64}} \right|< 0.9$ for any $v\notin T$.
Hence, by Theorem \ref{thm:BB} and the argument in the proof of Lemma \ref{lem:propzeta}, we find for $v\notin T$ 
    \[
    \mathcal{D}_{E_v}(\pi_v,s+\tfrac{3}{2}-t)=1+a_v q_v^{-2s-2+2t}+b_vq_v^{-4s-5+4t}
    \]
with some $0\leq a_v\leq 2$ and $0\leq b_v\leq 1$ for $v\notin T$.
It follows from this fact that $a_n(E,\pi,T,t)\geq 0$ and hence we get
    \begin{multline*}
     \sum_{\substack{E\in X(D, \cE_S), \\ N(\ff^S_E)<x}}  
     L(1,\eta)^2 \, \alpha_E^{\cE_S}(\phi)\, |\cP_E(\phi)|^2 \, N(\ff_E^S)^{t-\tfrac{1}{2}}\\
    \leq\sum_{n=1}^\infty \sum_{\substack{E\in X(D,\cE_S), \\   nN(\ff_E^S)<x}}  
    L(1,\eta)^2 \, \alpha_E^{\cE_S}(\phi)\, |\cP_E(\phi)|^2\,  
    \frac{a_n(E,\pi,T,t)}{N(\ff_E^S)^{\frac12-t}} 
    =\sum_{m<x}c_m(t).
    \end{multline*}
Combining this inequality with \eqref{eq:20200904} and \eqref{eq:tauberian}, and taking $\ds\lim_{T\to\Sigma}$, we obtain the upper bound
    \begin{equation}\label{eq:limsup}
    \limsup_{x\to\infty}\ x^{-t}
    \sum_{\substack{E\in X(D, \cE_S),\\  N(\ff_E^S)<x}}
    N(\ff_E^S)^{t-\tfrac{1}{2}}
    \, L(1,\eta)^2 \, \alpha_E^{\cE_S}(\phi)\, |\cP_E(\phi)|^2\leq C(D, \cE_S, \phi, t),
    \end{equation}
where we set
    \begin{align*}
    C(D, \cE_S, \phi, t)&=\lim_{T\to\Sigma}\frac{1}{t}\fC(D, \cE_S, \phi, T) \\
    &=\frac{1}{t}\fC(D,\cE_S, \phi) L^S(2,\pi,\mathrm{Ad})  
    \prod_{v\notin S}\left\{ 1-q_v^{-3}-\frac{1-q_v^{-1}}{1+q_v^{-1}}q_v^{-3}
    \lambda_v^2  \right\} .
    \end{align*}
In particular,  there is a constant $R>0$ such that for any $x>0$,
    \begin{equation}\label{eq:limsup2}
    \sum_{\substack{E\in X(D, \cE_S),\\  N(\ff^S_E)<x}}
    N(\ff_E^S)^{t-\tfrac{1}{2}}
    \, L(1,\eta)^2 \, \alpha_E^{\cE_S}(\phi)\, |\cP_E(\phi)|^2
    \leq R\cdot x^t C(D, \cE_S, \phi, t).
    \end{equation}
    
We set $B_v(s,t)\coloneqq1+2q_v^{-2s-2+2t}+q_v^{-4s-5+4t}$,  $B^T(s, t)\coloneqq\prod_{v\not\in T}B_v(s,t)$
and write $B^T(s,t)=\sum_{n=1}^\infty b_n(T,  t)n^{-s}$.
Since $a_n(E, \pi, T, t)\leq b_n(T, t)$, we have
    \begin{multline*}
    \sum_{m<x}c_m(t)-     
    \sum_{\substack{E\in X(D,  \cE_S), \\ N(\ff_E^S)<x}} 
    N(\ff_E^S)^{t-\tfrac{1}{2}}\, L(1,\eta)^2 \, 
    \alpha_E^{\cE_S}(\phi)\, |\cP_E(\phi)|^2 \\
    =\sum_{n=2}^\infty \sum_{\substack{E\in X(D,\cE_S), \\  nN(\ff_E^S)<x}} 
    L(1,\eta)^2 \, \alpha_E^{\cE_S}(\phi)\, |\cP_E(\phi)|^2\,   
    \frac{a_n(E,\pi,T,t)}{N(\ff_E^S)^{\frac12-t}} \\
    \leq \sum_{n=2}^\infty \sum_{\substack{E\in X(D,\cE_S), \\  
    nN(\ff_E^S)<x}} 
    L(1,\eta)^2 \, \alpha_E^{\cE_S}(\phi)\, |\cP_E(\phi)|^2\,   
    \frac{b_n(T, t)}{N(\ff_E^S)^{\frac12-t}} \\
    = \sum_{n=2}^\inf b_n(T,t) 
    \sum_{\substack{E\in X(D,\cE_S), \\  N(\ff^S_E)<\frac{x}{n}}} 
    N(\ff_E^S)^{t-\tfrac{1}{2}}\, L(1,\eta)^2 \, 
    \alpha_E^{\cE_S}(\phi)\, |\cP_E(\phi)|^2 .
    \end{multline*}
It follows from the upper bound \eqref{eq:limsup2} that the last expression is bounded above by
    \[
    R\sum_{n=2}^\inf b_n(T,t) \left(\frac{x}{n}\right)^t C(D, \cE_S, \phi, t)
    =R\cdot x^t\, C(D, \cE_S, \phi, t)(B^T(t,t)-1).
    \]
We deduce from this and \eqref{eq:tauberian} that
    \begin{align*}
    &\liminf_{x\to\inf}\ x^{-t}\sum_{\substack{E\in X(D,\cE_S), \\  N(\ff_E^S)<x}}    
    N(\ff_E^S)^{t-\tfrac{1}{2}}\,  L(1,\eta)^2 \, \alpha_E^{\cE_S}(\phi)\, |\cP_E(\phi)|^2\\
    &\hspace{45pt}\geq \liminf_{x\to\infty}\ x^{-t}
    \sum_{m<x}c_m(t)-R\cdot C(D, \cE_S, \phi, t)(B^T(t, t)-1) \\
    &\hspace{65pt}=\frac{1}{t}\fC(D, \cE_S, \phi, T) 
    -R\cdot C(D, \cE_S, \phi, t)\, (B^T(t,t)-1).
    \end{align*}
Since $\ds\lim_{T\to\Sigma}B^T(t,t)=1$,   taking $\ds\lim_{T\to\Sigma}$ of the both sides of the above inequality,   we get
    \begin{equation}\label{eq:liminf}
    \liminf_{x\to\inf}\ x^{-t}\sum_{\substack{E\in X(D,\cE_S), \\ N(\ff_E^S)<x}}
    N(\ff_E^S)^{t-\tfrac{1}{2}}\,    
    L(1,\eta)^2 \, \alpha_E^{\cE_S}(\phi)\, |\cP_E(\phi)|^2\geq  C(D, \cE_S, \phi, t).
    \end{equation}

From \eqref{eq:re}, \eqref{eq:limsup} and \eqref{eq:liminf}, we obtain
    \begin{align*}
    &\hspace{-25pt}\lim_{x\to\infty} 
    x^{-t}\sum_{\substack{E\in X(D, \cE_S) \\ N(\ff_S^E)<x}}
    N(\ff_E^S)^{t-\frac12}L(1, \eta)^2\alpha_E^{\cE_S}(\phi)|\cP_E(\phi)|^2 \\
    &=C(D, \cE_S, \phi, t)=\frac{1}{t}\fC(D,\cE_S, \phi) L^S(2,\pi,\mathrm{Ad})  
    \prod_{v\notin S}\left\{ 1-q_v^{-3}-\frac{1-q_v^{-1}}{1+q_v^{-1}}q_v^{-3}
    \lambda_v^2  \right\} \\
    &=L(\tfrac12, \pi) \frac{2\pi^{r_2}c_F^2\langle \phi, \phi \rangle}
    {t|\Delta_F|^\frac12}
    \fP_S(D, \cE_S, \phi_S)
    \prod_{v\notin S}\left\{1-q_v^{-3}-\frac{q_v-1}{q_v+1}q_v^{-3}\lambda_v^2\right\}.
    \end{align*}
Hence, we deduce
    \begin{align*}
    & \lim_{x\to\inf} \, x^{-t}                    
    \sum_{\substack{E\in X(D,\cE_S),\\ N(\ff_E^S)<x}} 
     N(\ff_E^S)^{t-1} \, L(1,\eta_E)^2 \, \alpha_{E, S}(\phi)^{-1}|\cP_E(\phi)|^2 \\
    & =L(\tfrac12, \pi) \frac{2\pi^{r_2}c_F^2}
    {t|\Delta_F|^\frac12}
    \times\prod_{v\in S}\frac{L(1, \eta_{\cE_v})^{2-m_v}}{2c_v \, L(\frac12, \pi_v)}
    \times\prod_{v\notin S}\left\{1-q_v^{-3}-\frac{q_v-1}{q_v+1}q_v^{-3}\lambda_v^2\right\} ,
    \end{align*}
since we have 
\[
\alpha_{E,S}(\phi,\phi)^{-1}|\cP_E(\phi)|^2=N(\ff_E^S)^{1/2}\alpha_E^{\cE_S}(\phi)|\cP_E(\phi)|^2 \times \prod_{v\in S}|d_{\cE_v}|^{-\frac12}\alpha_{\cE_v}(\phi_v,\phi_v)^{-1}
\]
and by Corollary \ref{cor:fP} 
\[
\fP_S(D, \cE_S, \phi_S)\times \prod_{v\in S}|d_{\cE_v}|^{-\frac12}\alpha_{\cE_v}(\phi_v,\phi_v)^{-1}=\frac{1}{\langle \phi, \phi \rangle}\prod_{v\in S}\frac{L(1, \eta_{\cE_v})^{2-m_v}}{2c_v \, L(\frac12, \pi_v)}.
\]
From this we obtain Theorem \ref{thm:main}.


\appendix
\def\thesection{\Alph{section}}
\section{Numerical examples for mean value theorems}\label{sec:appendix}
\begin{center}
by \textsc{M. Suzuki, S. Wakatsuki and S. Yokoyama\footnote{\address{Shun'ichi Yokoyama \\
Department of Mathematical Sciences\\
Graduate School of Science, Tokyo Metropolitan University\\
1-1 Minami-Osawa, Hachioji-shi, Tokyo, 192-0397, JAPAN}
\email{s-yokoyama@tmu.ac.jp}}}
\end{center}

Using \texttt{Magma}, we provide numerical examples for Theorem \ref{thm:mvfhol} and Theorem \ref{thm:mvfalg}.


\subsection{Elliptic modular forms}

We briefly recall the notation of \S\,\ref{sec:mvfhol}.
Let $k$ be a non-negative even integer and $f\in S_{k}(\SL_2(\Z))$ be a weight $k$ cuspidal Hecke eigenform with Fourier expansion $ f(z)=\sum_{n=1}^\infty a_nq^n$,  where $q=e^{2\pi\sqrt{-1}z}$.
Set $\lambda_p=p^{1-\frac{k}{2}}a_p$ for each prime $p$.
Let $\pi=\otimes_v\pi_v$ be the cuspidal automorphic representation of $\GL_2(\A_\Q)$ corresponding to $f$.

Let $E$ be a real quadratic field with discriminant $\fd=\fd_E$.
We have an equality of meromorphic functions $L_\fin(s, \pi\otimes\eta)=L(s+\tfrac{k-1}{2}, f, \fd)$.
Here, $\eta$ is the quadratic character on $\A_\Q^\times$ corresponding to $E$ and the right hand side is the analytic continuation of the Dirichlet $L$-series $\sum_{n=1}^\infty\left(\frac{\fd}{n}\right)a_n n^{-s-\frac{k-1}{2}}$, where $\left(\frac{\fd}{n}\right)$ is the Kronecker symbol.
It is easy to see that the function $f_\fd=\sum_{n=1}^\infty \left(\frac{\fd}{n}\right)a_nq^n$ belongs to $S_k(\Gamma_0(\fd^2))$ and satisfies
    \[
    f_\fd\left(\frac{-1}{\fd^2z}\right)=\fd^k z^k f_\fd(z).
    \]
The $L$-function $L(s, f, \fd)$ has an integral expression
    \begin{align*}
    (2\pi)^{-s}\Gamma(s)L(s, f, \fd)&=\int_0^\infty f_\fd(\sqrt{-1}y)y^s \frac{\d y}{y} \\
    &=\int_{1/\fd}^\infty 
    f_\fd(\sqrt{-1}y)(y^s+(-1)^{\frac k2}\fd^{k-2s}y^{k-s}) \frac{\d y}{y} .
    \end{align*}
The last integral converges for all $s$. Hence we get
    \[
    (2\pi)^{-\frac{k}{2}}L(\tfrac{k}{2}, f, \fd)
    =(1+(-1)^{\frac{k}{2}}) \sum_{n=1}^\infty \left(\frac\fd n\right) a_n 
    \Xi_k\left(\frac{2\pi n}{\fd}\right),
    \]
where 
    \[
    \Xi_k(x)=(\fd x)^{-\frac{k}{2}}e^{-x}\sum_{m=0}^{\frac{k}{2}-1}\frac{x^m}{m!}.
    \]

Now we give a numerical example for $k=12$, the first non-trivial case.
The space $S_{12}(\SL_2(\Z))$ is $1$-dimensional and generated by the Hecke eigenform
    \[
    \Delta(z)=q\prod_{n=1}^\infty(1-q^n)^{24}=\sum_{n=1}^\infty \tau(n)q^n.
    \]
By \cite{Zagier}*{(29)}, the inner product $\langle \Delta, \Delta \rangle$ is written as an infinite series and we obtain the numerical value
    \[
    \langle \Delta, \Delta \rangle = 
    1.035 \ 362 \ 056 \ 804 \ 320 \ 922 \ 347 \ 816 \times10^{-6}.
    \]

For $x>0$, put
    \[
    M^\mathrm{ell}(x)=x^{-1}\sum_{\substack{E\in X(\M_2(\Q), \cE_S) \\ a_E<x}}
    L(\tfrac12, \pi\otimes\eta_E)
    =\frac{2 \, \Gamma(6)}{(2\pi)^6 \, x}
    \sum_{\substack{E\in X(\M_2(\Q), \cE_S) \\ a_E<x}} L(6, \Delta, \fd_E).
    \]
Here, $S=\{2, \infty\}$, $\cE_2\in X(\M_2(\Q_2))$ is arbitrary, $\cE_\infty$ is isomorphic to $\R\times\R$ and $a_E$ is defined in \eqref{eq:a_E}.
The following table shows the values $M^\mathrm{ell}(x)$ for $x=i\cdot 10^5$, $i=1, 2,\ldots, 10$.
The last row is the values of the right hand side of \eqref{eq:mvfhol1} for $t=1$.

\begin{table}[htb]\renewcommand{\arraystretch}{1.3}
  \begin{tabular}{|c|c|c|c|c|} \hline
            & $\cE_2\simeq\Q_2\times\Q_2$ & $\cE_2\simeq\Q_2(\sqrt{5})$ & $\cE_2\simeq\Q_2(\sqrt{3})$ & $\cE_2\simeq\Q_2(\sqrt{7})$ \\ \hline\hline
  $M^\mathrm{ell}(10^5)$ & 0.0002521751 & 0.0004181237 & 0.0004732234 & 0.0004706102 \\ \hline
  $M^\mathrm{ell}(2\cdot 10^5)$ & 0.0002524130 & 0.0004201329 & 0.0004729765 & 0.0004710201 \\ \hline
  $M^\mathrm{ell}(3\cdot 10^5)$ & 0.0002523719 & 0.0004194494 & 0.0004727592 & 0.0004729393 \\ \hline
  $M^\mathrm{ell}(4\cdot 10^5)$ & 0.0002520415 & 0.0004198913 & 0.0004731776 & 0.0004729414  \\ \hline
  $M^\mathrm{ell}(5\cdot 10^5)$ & 0.0002522972 & 0.0004196702 & 0.0004732125 & 0.0004720879 \\ \hline
  $M^\mathrm{ell}(6\cdot 10^5)$ & 0.0002520912 & 0.0004197186 & 0.0004728869 & 0.0004720721 \\ \hline
  $M^\mathrm{ell}(7\cdot 10^5)$ & 0.0002520625 & 0.0004196767 & 0.0004727840 & 0.0004723050 \\ \hline
  $M^\mathrm{ell}(8\cdot 10^5)$ & 0.0002521590 & 0.0004199355 & 0.0004729952 & 0.0004728795 \\ \hline
  $M^\mathrm{ell}(9\cdot 10^5)$ & 0.0002519507 & 0.0004204256 & 0.0004732688 & 0.0004730878  \\ \hline
  $M^\mathrm{ell}(10^6)$ & 0.0002519197 & 0.0004203996 & 0.0004733662 & 0.0004729954 \\ \hline\hline
  RHS & 0.0002520826 & 0.0004201377 & 0.0004726550 & 0.0004726550 \\\hline   
 \end{tabular}
\end{table}


\subsection{Algebraic modular forms}\label{app:mvfalg}

Let $D$ be the quaternion division algebra over $\Q$ with discriminant $q=11$.
We take $\alpha, \beta \in D$ and a maximal order $\fO$ as in \S\,\ref{sec:mvfalg}.
Then $|\mathrm{Cl}(\fO)|=2$.
Take a set of coset representatives $1=x_1, x_2\in PD_{\A_\fin}^\times$ of $PD^\times\bs PD_\A^\times/PD_\infty^\times U$.
The space of algebraic modular forms $\mathcal{A}(\fO)$ is 2-dimensional.
Let $\phi$ be a non-zero element of $\mathcal{A}(\fO)$ which is orthogonal to the constant functions:
    \[
    0=(\phi, \mathbf{1})=\frac{\phi(x_1)}{w_1}+\frac{\phi(x_2)}{w_2},
    \]
where $\mathbf{1}$ is the constant function such that $\mathbf{1}(x_1)=\mathbf{1}(x_2)=1$ and $w_i=|PD^\times\cap x_jUx_j^{-1}|$.
As in \S \ref{sec:mvfalg}, let $\pi$ be the corresponding automorphic representation.
By Eichler mass formula,  we know that $\{w_1, w_2\}=\{2, 3\}$.
One can compute the Hecke operators $T_p$ on $\mathcal{A}(\fO)$ using Brandt matrices in \texttt{Magma} (see \cite{Voight}*{\S 41} for Brandt matrices). 
For example,  we have
    \[
    T_2=
        \begin{pmatrix}
        1 & 2 \\
        3 & 0
        \end{pmatrix}
    \]
with respect to the basis $\phi_1$, $\phi_2$ defined as $\phi_i(x_j)=\delta_{ij}$ ($\delta_{ij}$ means the Kronecker delta), that is, we have $(T_2\phi_1,T_2\phi_2)=(\phi_1,\phi_2)T_2$. 
Computing the eigenvectors of this Hecke operator,  we see that $\phi=2\phi_1-3\phi_2\in\mathcal{A}(\fO)$ is a Hecke eigenform.  
Thus we get $(w_1, w_2)=(2, 3)$, since $\phi(x_1)=2$ and $\phi(x_2)=-3$. 
Note that we also get $\lambda_2=-2$ by $T_2 \left(\begin{smallmatrix} 2 \\ -3 \end{smallmatrix} \right)=\lambda_2\left(\begin{smallmatrix} 2 \\ -3 \end{smallmatrix} \right)$.

We write 
    \[
    \sum_{t\in\mathrm{Cl}(E)}\phi(t)=n_1(E)\phi(x_1)+n_2(E)\phi(x_2),
    \]
where $n_i(E)$ is the number of $t\in\mathrm{Cl}(E)$ which is sent to $x_i$ under the map \eqref{eq:classmap}.

\if0
We determine $(w_1, w_2)$ as follows.
The Jacquet-Langlands transfer of $\phi$ is the Hecke eigenform in $S_2(\Gamma_0(11))$, which corresponds to an elliptic curve $EC$ over $\Q$ of conductor $11$.
Let $K=\Q(\sqrt{-47})\in X(D)$, $EC_K$ be the quadratic twist of $EC$ by $K$ and $L(s, EC_K)$ the Hasse-Weil $L$-function.
Since the rank of $EC_K$ is $2$, we have $0=L(1, EC_K)=L_\fin(\frac12, \pi\otimes\eta_K)$.
Thus $\cP_K(\phi)=0$, and hence $n_1(K)\phi(x_1)+n_2(K)\phi(x_2)=0$.
By a computation using \texttt{Magma}, we get $n_1(K)=3$ and $n_2(K)=2$, hence $(w_1, w_2)=(2, 3)$. 
\fi

For $x>0$, put
    \[
    M^\mathrm{alg}(x)=x^{-\frac32}\sum_{\substack{E\in X(D, \cE_S)\\ a_E<x}}
     \left| \sum_{t\in\mathrm{Cl}(E)}\phi(t) \right|^2
    =x^{-\frac32}\sum_{\substack{E\in X(D, \cE_S)\\ a_E<x}} (2n_1(E)-3n_2(E))^2.
    \] 
Here, $S=\{2, 11, \infty\}$, $\cE_2\in X(D_2)$ is arbitrary, $\cE_{11}\simeq\Q_{11}(\sqrt{2})$, $\cE_\infty\simeq\C$ and $a_E$ is defined in \eqref{eq:a_E}. 
The following table shows the values $M^\mathrm{alg}(x)$ for $x=i\cdot 10^5$, $i=1, 2, \ldots, 10$.
The last row is the value of the right hand side of \eqref{eq:mvfalg}.

\begin{table}[htb]\renewcommand{\arraystretch}{1.3}
  \begin{tabular}{|c|c|c|c|c|} \hline
& $\cE_2\simeq\Q_2\times\Q_2$ & $\cE_2\simeq\Q_2(\sqrt{5})$ & $\cE_2\simeq\Q_2(\sqrt{3})$ & $\cE_2\simeq\Q_2(\sqrt{7})$ \\ \hline\hline
  $M^\mathrm{alg}(10^5)$ &0.0046337803
  &0.0231138780&0.0231920179&0.0231049920 \\ \hline
  $M^\mathrm{alg}(2\cdot 10^5)$ & 0.0046185984
   &0.0230698711&0.0231302785&0.0230117557 \\ \hline
  $M^\mathrm{alg}(3\cdot 10^5)$ & 0.0046482718
   &0.0231826737&0.0231598032&0.0230706705 \\ \hline
  $M^\mathrm{alg}(4\cdot 10^5)$ & 0.0046135180
   &0.0231018692&0.0231416705&0.0232480416 \\ \hline
  $M^\mathrm{alg}(5\cdot 10^5)$ & 0.0046270098
   &0.0232145560&0.0231038542&0.0231405163 \\ \hline
  $M^\mathrm{alg}(6\cdot 10^5)$ & 0.0046187112
   &0.0231879730&0.0233013954&0.0231419211 \\ \hline
  $M^\mathrm{alg}(7\cdot 10^5)$ & 0.0046277835
   &0.0231512776&0.0232556126&0.0230933603 \\ \hline
  $M^\mathrm{alg}(8\cdot 10^5)$ & 0.0046193685
   &0.0231590930&0.0231640137&0.0230301054 \\ \hline
  $M^\mathrm{alg}(9\cdot 10^5)$ & 0.0046396704
   &0.0231281141&0.0230524326&0.0232173711 \\ \hline
  $M^\mathrm{alg}(10^6)$  & 0.0046304910 & 0.0231243940 & 0.0230294240 & 0.0231477800 \\ \hline\hline
  RHS & 0.0046263724 & 0.0231318618 & 0.0231318618 & 0.0231318618 \\ \hline
  \end{tabular}
\end{table}


\begin{bibdiv}
\begin{biblist}

\bib{BB}{article}{
   author={Blomer, Valentin},
   author={Brumley, Farrell},
   title={On the Ramanujan conjecture over number fields},
   journal={Ann. of Math. (2)},
   volume={174},
   date={2011},
   number={1},
   pages={581--605},
   issn={0003-486X},
   review={\MR{2811610}},
   doi={10.4007/annals.2011.174.1.18},
}

\bib{BCP}{article}{
    author= {Bosma, Wieb}, 
    author={Cannon, John},  
    author={Playoust, Catherine},
    title={The Magma algebra system. I. The user language},
   note={Computational algebra and number theory (London, 1993)},
   journal={J. Symbolic Comput.},
   volume={24},
    year={1997},
    number={3-4},
    pages={235--265},
    issn={0747-7171},
   review={\MR{1484478}},
     doi={10.1006/jsco.1996.0125},
    url= {http://dx.doi.org/10.1006/jsco.1996.0125},
}

\bib{Bump}{book}{
   author={Bump, Daniel},
   title={Automorphic forms and representations},
   series={Cambridge Studies in Advanced Mathematics},
   volume={55},
   publisher={Cambridge University Press, Cambridge},
   date={1997},
   pages={xiv+574},
   isbn={0-521-55098-X},
   review={\MR{1431508}},
   doi={10.1017/CBO9780511609572},
}


\bib{BFH}{article}{
    author= {Bump, Daniel}, 
    author={Friedberg, Solomon},  
    author={Hoffstein, Jeffrey},
    title={Sums of twisted $\GL(3)$ automorphic $L$-functions},
       book={
       title={Contributions to automorphic forms, geometry, and number theory},
      publisher={Johns Hopkins University Press},
   },
    year={2004},
    pages={131--162},
    issn={9780801878602},
   review={\MR{2058607}},
}

\bib{CFKRS}{article}{
    author= {Conrey, J.B.}, 
    author={Farmer, D.W.},  
    author={Keating, J.P.},
    author={Rubinstein, M.O.},
    author={Snaith, N.C.},
    title={Integral moments of $L$-functions},
   journal={Proc.  London Math. Soc.},
   volume={91},
    year={2005},
    number={3},
    pages={33--104},
   review={\MR{2149530}},
     doi={10.1112/S0024611504015175},
}

\bib{DW}{article}{
    author={Datsovsky, Boris},
    author={Wright, David J.},
    title={Density of discriminants of cubic extensions},
   journal={J. Reine Angew. Math.},
   volume={386},
    year={1988},
    pages={116--138},
   review={\MR{0936994}},
   issn={0075-4102; 1435-5345/e},
   doi={10.1515/crll.1988.386.116},    
}

\bib{Faraut}{book}{
   author={Faraut, Jacques},
   title={Analysis on Lie groups},
   series={Cambridge Studies in Advanced Mathematics},
   volume={110},
   note={An introduction},
   publisher={Cambridge University Press, Cambridge},
   date={2008},
   pages={x+302},
   isbn={978-0-521-71930-8},
   review={\MR{2426516}},
   doi={10.1017/CBO9780511755170},
}

\bib{FF}{article}{
   author={Fisher, Benji},
   author={Friedberg, Solomon},
   title={Sums of twisted $\GL(2)$ $L$-functions over function fields},
   journal={Duke. Math. J.},
   volume={117},
   date={2003},
   number={3},
   pages={543--570},
   issn={},
   review={\MR{}},
   doi={10.1215/S0012-7094-03-11735-4},
}

\bib{FH}{article}{
   author={Friedberg, Solomon},
   author={Hoffstein, Jeffrey},
   title={Nonvanishing theorems for automorphic $L$-functions on $\GL(2)$},
   journal={Ann. Math.},
   volume={142},
   date={1995},
   number={2},
   pages={385--423},
   issn={0003-486X},
   review={\MR{1343325}},
   doi={10.2307/2118638},
}

\bib{FHL}{article}{
   author={Friedberg, Solomon},
   author={Hoffstein, Jeffrey},
   author={Lieman, Daniel},
   title={Double Dirichlet series and the $n$-th order twists of Hecke $L$-series},
   journal={Math. Ann.},
   volume={327},
   date={2003},
   number={2},
   pages={315--338},
   issn={},
   review={\MR{}},
   doi={10.1007/s00208-003-0455-4},
}

\bib{GV}{article}{
   author={Goldfeld, D.},
   author={Viola, C.},
   title={Mean values of $L$-functions associated to elliptic, Fermat and other curves at the centre of the critical strip},
   journal={J. Number Theory},
   volume={11},
   date={1979},
   number={3},
   pages={305-320},
   review={\MR{0544259}},
   doi={10.1016/0022-314X(79)90004-0},
}

\bib{Guo}{article}{
   author={Guo, Jiandong},
   title={On the positivity of the central critical values of automorphic
   $L$-functions for ${\rm GL}(2)$},
   journal={Duke Math. J.},
   volume={83},
   date={1996},
   number={1},
   pages={157--190},
   issn={0012-7094},
   review={\MR{1388847}},
   doi={10.1215/S0012-7094-96-08307-6},
}

\bib{Helgason}{book}{
   author={Helgason, Sigurdur},
   title={Groups and geometric analysis},
   series={Pure and Applied Mathematics},
   volume={113},
   note={Integral geometry, invariant differential operators, and spherical
   functions},
   publisher={Academic Press, Inc., Orlando, FL},
   date={1984},
   pages={xix+654},
   isbn={0-12-338301-3},
   review={\MR{754767}},
}

\if0\bib{HPS}{article}{
   author={Hijikata, H.},
   author={Pizer, A.},
   author={Shemanske, T.},
   title={Orders in quaternion algebras},
   journal={J. Reine Angew. Math.},
   volume={394},
   date={1989},
   pages={59--106},
   issn={0075-4102},
   review={\MR{977435}},
}\fi

\bib{II}{article}{
   author={Ichino, Atsushi},
   author={Ikeda, Tamotsu},
   title={On the periods of automorphic forms on special orthogonal groups and the Gross-Prasad conjecture},
   journal={Geom. Funct. Anal.},
   volume={19},
   date={2010},
   number={5},
   pages={1378--1425},
   issn={},
   review={\MR{2585578}},
   doi={10.1007/s00039-009-0040-4}   
}

\bib{JS}{article}{
   author={Jacquet, Herv\'e},
   author={Shalika, Joseph},
   title={On Euler products and the classification of automorphic representations {I}},
   journal={Amer. J Math.},
   volume={103},
   date={1981},
   number={3},
   pages={499--558},
   review={\MR{0618323}},
}

\bib{KY1}{article}{
   author={Kable, Anthony C.},
   author={Yukie, Akihiko},
   title={The mean value of the product of class numbers of paired quadratic
   fields. I},
   journal={Tohoku Math. J. (2)},
   volume={54},
   date={2002},
   number={4},
   pages={513--565},
   issn={0040-8735},
   review={\MR{1936267}},
}

\bib{Kimura}{book}{
   author={Kimura, Tatsuo},
   title={Introduction to prehomogeneous vector spaces},
   series={Translations of Mathematical Monographs},
   volume={215},
   note={Translated from the 1998 Japanese original by Makoto Nagura and
   Tsuyoshi Niitani and revised by the author},
   publisher={American Mathematical Society, Providence, RI},
   date={2003},
   pages={xxii+288},
   isbn={0-8218-2767-7},
   review={\MR{1944442}},
   doi={10.1090/mmono/215},
}

\if0\bib{KP}{article}{
   author={Keaton, Rodney},
   author={Pitale, Ameya},
   title={Restrictions of Eisenstein series and Rankin-Selberg convolution},
   journal={Doc. Math.},
   volume={24},
   date={2019},
   pages={1--45},
   issn={1431-0635},
   review={\MR{3935491}},
}\fi

\bib{Knapp}{book}{
   author={Knapp, A.W.},
   title={Representation theory of semisimple groups},
   series={Princeton Landmarks in Mathematics},
   publisher={Princeton University Press, Princeton, NJ, 2001. An overview based on examples; Reprint of the 1986 original.},
   pages={xx+773},
   review={\MR{1880691}},
   doi={10.1515/9781400883974},
}

\bib{KZ}{article}{
   author={Kohnen, W},
   author={Zagier, D},
   title={Values of $L$-series of modular forms at the center of the critical strip},
    journal={Invent. Math.},
    volume={64},
   date={1981},
   pages={175--198},
   review={\MR{0629468}},
   doi={10.1007/BF01389166},
}

\bib{Li1}{book}{
   author={Li, Wen-Wei},
   title={Zeta integrals, Schwartz spaces and local functional equations},
   series={Lecture Notes in Mathematics},
   volume={2228},
   publisher={Springer, Cham},
   date={2018},
   pages={viii+139},
   isbn={978-3-030-01287-8},
   isbn={978-3-030-01288-5},
   review={\MR{3839636}},
   doi={10.1007/978-3-030-01288-5},
}

\bib{Li2}{article}{
   author={Li, Wen-Wei},
   title={Towards generalized prehomogeneous zeta integrals},
   conference={
      title={Relative aspects in representation theory, Langlands
      functoriality and automorphic forms},
   },
   book={
      series={Lecture Notes in Math.},
      volume={2221},
      publisher={Springer, Cham},
   },
   date={2018},
   pages={287--318},
   review={\MR{3839700}},
}

\bib{Li3}{article}{
   author={Li, Wen-Wei},
   title={Generalized zeta integrals on certain real prehomogeneous vector spaces},
    journal={arXiv e-prints},
   date={2019},
}

\bib{Li4}{article}{
   author={Li, Wen-Wei},
   title={Variations on themes of Sato},
    journal={J. Number Theory, in press},
   date={2020},
   doi={10.1016/j.jnt.2020.09.013},
}

\if0\bib{Macdonald}{book}{
   author={Macdonald, I. G.},
   title={Symmetric functions and Hall polynomials},
   series={Oxford Mathematical Monographs},
   edition={2},
   note={With contributions by A. Zelevinsky;
   Oxford Science Publications},
   publisher={The Clarendon Press, Oxford University Press, New York},
   date={1995},
   pages={x+475},
   isbn={0-19-853489-2},
   review={\MR{1354144}},
}\fi

\bib{Nark}{book}{
   author={Narkiewicz, Wladyslaw},
   title={Elementary and analytic theory of algebraic numbers},
   note={Monografie Matematyczne, Tom 57},
   publisher={PWN---Polish Scientific Publishers, Warsaw},
   date={1974},
   pages={630 pp. (errata insert)},
   review={\MR{0347767}},
}

\bib{RS}{article}{
   author={Radzwi\l\l, Maksym},
   author={Soundararajan, K},
   title={Moments and distribution of central $L$-values of quadratic twists of elliptic curves},
   journal={Invent. Math.},
   volume={202},
   date={2015},
   number={3},
   pages={1029--1068},
   issn={},
   review={\MR{3425386}},
   doi={10.1007/s00222-015-0582-z},
}

\bib{Saito}{article}{
   author={Saito, Hiroshi},
   title={On Tunnell's formula for characters of ${\rm GL}(2)$},
   journal={Compositio Math.},
   volume={85},
   date={1993},
   number={1},
   pages={99--108},
   issn={0010-437X},
   review={\MR{1199206}},
}

\bib{Sato}{article}{
   author={Sato, Fumihiro},
   title={Zeta functions of $({\rm SL}_2\times{\rm SL}_2\times{\rm
   GL}_2,{\bf M}_2\oplus{\bf M}_2)$ associated with a pair of Maass cusp
   forms},
   journal={Comment. Math. Univ. St. Pauli},
   volume={55},
   date={2006},
   number={1},
   pages={77--95},
   issn={0010-258X},
   review={\MR{2252001}},
}

\bib{Sono}{article}{
   author={Sono, Keiju},
   title={Moments of the products of quadratic twists of automorphic $L$-functions},
   journal={Manuscripta. Math.},
   volume={150},
   date={2016},
   number={},
   pages={547--569},
   issn={},
   review={\MR{3514746}},
   doi={10.1007/s00229-016-0823-5},
}

\bib{SY}{article}{
   author={Soundararajan, K.},
   author={Young, Matthew P.},
   title={The second moment of quadratic twists of modular $L$-functions},
   journal={J. Eur. Math. Soc. (JEMS)},
   volume={12},
   date={2010},
   number={5},
   pages={1097--1116},
   issn={},
   review={\MR{2677611}},
   doi={10.4171/JEMS/224},
}

\bib{SW}{article}{
   author={Suzuki, Miyu},
   author={Wakatsuki, Satoshi},
   title={Zeta functions and nonvanishing theorems for toric periods on $\GL_2$},
    eprint={2103.04589},
   date={2020},
}

\bib{SWY}{article}{
   author={Suzuki, Miyu},
   author={Wakatsuki, Satoshi},
   author={Yokoyama, Shun'ichi},
   title={Distribution of toric periods of modular forms on definite
   quaternion algebras},
   journal={Res. Number Theory},
   volume={8},
   date={2022},
   number={4},
   pages={Paper No. 90, 33},
   issn={2522-0160},
   review={\MR{4496694}},
   doi={10.1007/s40993-022-00389-8},
}

\bib{Tunnell}{article}{
   author={Tunnell, Jerrold B.},
   title={Local $\epsilon $-factors and characters of ${\rm GL}(2)$},
   journal={Amer. J. Math.},
   volume={105},
   date={1983},
   number={6},
   pages={1277--1307},
   issn={0002-9327},
   review={\MR{721997}},
   doi={10.2307/2374441},
}

\bib{Voight}{book}{
  title={Quaternion algebras},
  author={Voight,  John},
  booktitle={},
  pages={885+xxiii},
  year={2021},
  publisher={Springer,  Cham},
  issn={978-3-030-56692-0},
  review={\MR{4279905}},
  doi={10.1007/978-3-030-56694-4},
}

\bib{Wal2}{article}{
   author={Waldspurger, J.-L.},
   title={Sur les valeurs de certaines fonctions $L$ automorphes en leur centre de sym\'etrie},
   journal={Compositio Math.},
   volume={54},
   date={1985},
   number={2},
   pages={173-242},
   issn={0010-437X},
   review={\MR{783511}},
}

\bib{WY}{article}{
   author={Wright, David},
   author={Yukie, Akihiko},
   title={Prehomogeneous vector spaces and field extensions},
    journal={Invent. Math.},
   volume={110},
   date={1992},
   number={2},
   pages={283--314},
   review={\MR{1185585}},  
   doi={10.1007/BF01231334}, 
 }

\bib{Zagier}{article}{
   author={Zagier, Don},
   title={Modular forms whose Fourier coefficients involve zeta-functions of quadratic fields},
      conference={
      title={Modular Forms of One Variable V\hspace{-.1em}I},
   },
   book={
      series={Springer Lecture Notes},
      volume={},
      publisher={Springer},
      },
   date={1977},
   number={627},
   pages={105--169},
 }

\end{biblist}
\end{bibdiv}

\end{document}